\documentclass[12pt, reqno]{amsart}
\usepackage{textcomp}
\usepackage{amsmath,amsfonts,amssymb,amsthm, multicol}
\usepackage{anysize}
\marginsize{2cm}{2cm}{2cm}{2cm}
\usepackage{graphicx}
\usepackage{color}
\usepackage[pdftex]{hyperref}
\usepackage{comment}
\usepackage{tikz}
    \usetikzlibrary{cd} 
    \usetikzlibrary{calc} 

\usepackage{enumerate}
\usepackage{caption}
\usepackage{ gensymb }

\usepackage{mathtools}

\def\a{\mathfrak{a}}
\def\b{\mathfrak{b}}
\def\g{\mathfrak{g}}
\def\h{\mathfrak{h}}
\def\k{\mathfrak{k}}

\def\n{\mathfrak{n}}
\def\q{\mathfrak{q}}
\def\r{\mathfrak{r}}

\def\t{\mathfrak{t}}

\def\z{\mathfrak{z}}

\def\sl{\mathfrak{sl}}
\def\su{\mathfrak{su}}
\def\sp{\mathfrak{sp}}

\def\C{\mathbb{C}}
\def\R{\mathbb{R}}

\def\N{\mathbb{N}}

\def\ad{\operatorname{ad}}
\def\tr{\operatorname{tr}}

\def\alt{\raise1pt\hbox{$\bigwedge$}}

\def\im{\operatorname{im}}

\def\Id{\operatorname{Id}}

\def\End{\operatorname{End}}
\def\Hom{\operatorname{Hom}}

\def\mid{\, \vert \,} 
\def\Span{\operatorname{span}}
\def\spec{\operatorname{spec}}
\def\diag{\operatorname{diag}}
  
\newcommand\aff{\mathfrak{aff}}

\theoremstyle{plain}
\newtheorem{theorem}{\bf Theorem}[section]
\newtheorem{corollary}[theorem]{\bf Corollary}
\newtheorem{proposition}[theorem]{\bf Proposition}
\newtheorem{lemma}[theorem]{\bf Lemma}

\theoremstyle{definition}

\newtheorem{example}[theorem]{\bf Example}

\theoremstyle{remark}
\newtheorem{remark}[theorem]{\bf Remark}

\usepackage[pageref]{backref}
\renewcommand*{\backrefalt}[4]{%
  \ifcase #1 %
    (Not cited.)%
  \or
    (Cited on page~\hyperlink{page.#2}{#2}.)%
  \else
    (Cited on pages~\hyperlink{page.#2}{#2}.)%
  \fi
}

\title{On centerless unimodular contact Lie algebras.}

\author{Agustín Garrone}
\email{agustin.garrone@unc.edu.ar} 

\date{}
\address{FAMAF, Universidad Nacional de C\'ordoba and CIEM-CONICET, Av. Medina Allende s/n, Ciudad Universitaria, X5000HUA C\'ordoba, Argentina}

\thanks{This work was partially supported by CONICET and SECyT-UNC (Argentina)}

\subjclass[2020]{}
\keywords{Lie algebra, centerless, unimodular, Sasaki, K-contact, DS-contact, Frobenius, Lefschetz}

\begin{document}

\begin{abstract}
    We provide an elementary proof that, in a (transversely) unimodular contact Lie algebra, the adjoint action of the Reeb vector is nilpotent except when the Lie algebra is isomorphic to either $\sl(2,\R)$ or $\su(2)$. We introduce a class of contact Lie algebras, called \textit{DS-contact Lie algebras}, containing all K-contact Lie algebras, and deduce from the previous result that the only centerless unimodular examples in this class are precisely $\sl(2,\R)$ and $\su(2)$. This gives an alternative proof of the previously known fact that centerless unimodular Sasakian Lie algebras are isomorphic to either $\sl(2,\R)$ or $\su(2)$. Some other results known to hold for Sasakian Lie algebras are generalized as well. We investigate several properties of DS-contact Lie algebras in relation to Frobenius Lie algebras, and also classify them in dimension five. Some implications for the contact Lefschetz condition are explored. 
\end{abstract}

\maketitle

\section{Introduction}

    A contact form on a $(2n+1)$-dimensional Lie algebra $\g$ is a $1$-form $\eta \in \g^*$ such that $\eta \wedge (d_{\g}\eta)^n \neq 0$. Such a form determines a unique element $\xi \in \g$, called the \textit{Reeb vector}, characterized by $\eta(\xi)=1$ and $\iota_{\xi} d \eta =0$. It is known that the center of a contact Lie algebra is either trivial or generated by the Reeb vector (see Proposition~\ref{prop: center in contact Lie algebras}). Moreover, contact Lie algebras with nontrivial center are precisely the one-dimensional central extensions of symplectic Lie algebras (see Proposition~\ref{prop: contactization}). Since symplectic Lie algebras are somewhat better understood, this correspondence provides useful structural information about contact Lie algebras with nontrivial center, and much of what is known comes from this fact

    \indent The situation is different for centerless contact Lie algebras, since no analogous reduction to symplectic Lie algebras is available. Several approaches have been used, including deformations and contractions of Lie algebras~\cite{GR}, invariants~\cite{S}, and double extensions~\cite{ARVS 2, ARVS 3, RVSSV}. Another common strategy is to focus on special classes of contact Lie algebras, such as those with abelian nilradical~\cite{ARVS 1}, or with a codimension-one Frobenius ideal~\cite{D}, or the so-called \textit{seaweed} Lie algebras~\cite{CR,CMRS,MY}. Considerable attention has also been devoted to compatible geometric structures, especially Sasakian structures and related variants~\cite{AFV,ACN,CF,DPS,LL}. Several of these techniques also work for the nontrivial center case. It should be noted that, by results of Gromov \cite{G} and Borman-Eliashberg-Murphy \cite{BEM}, every odd-dimensional Lie group admits a possibly noninvariant contact structure. Notice that not every odd-dimensional Lie algebra admits a contact form (e.g., the abelian ones).

    \indent Despite these efforts, relatively little is known about the general structure of centerless contact Lie algebras. One notable exception is the the fact that the only centerless unimodular Sasakian Lie algebras are $\sl(2,\R)$ and $\su(2)$. This arises as an immediate consequence of \cite[Theorem 2.1]{AHK}, where a classification of unimodular Sasakian Lie algebras up to modification is established: besides $\sl(2,\R)$ and $\su(2)$, they are Heisenberg Lie algebras $\h_{2n+1}$. To achieve this classification, fundamental results on the structure of unimodular Kähler Lie algebras of Hano \cite{Hano} and Nakajima \cite{Nakajima} are used, as well as properties of the aforementioned modification technique.   

    \indent While not a full-blown classification result, a much more general result can be established by rather elementary means. Its proof is the main content of this article:      
\begin{theorem} \label{thm: main result}
    With the exception of $\sl(2,\R)$ and $\su(2)$, the map $\ad_{\xi}$ in a (transversely) unimodular contact Lie algebra $(\g,\eta)$ is nilpotent, where $\xi$ is the Reeb vector.
\end{theorem}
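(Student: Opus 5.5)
The plan is to pass to the complexification and study the generalized eigenspace decomposition of $\ad_\xi$. First I record the basic identities. From $\iota_\xi d\eta=0$ we get $\eta([\xi,x])=0$ for all $x$, so $\im \ad_\xi\subset\ker\eta$ and $\eta\circ\ad_\xi=0$. Also $\mathcal L_\xi d\eta=d\iota_\xi d\eta=0$. Hence $D:=\ad_\xi$ preserves $V=\ker\eta$ and lies in $\sp(V,\omega)$, where $\omega=d\eta|_V$ is nondegenerate. Thus $\ad_\xi$ is nilpotent if and only if $D$ is.

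Suppose $D$ is not nilpotent, and write $\g^\C=\bigoplus_\lambda \g_\lambda$ for the generalized eigenspaces of $\ad_\xi$. Then $[\g_\lambda,\g_\mu]\subset\g_{\lambda+\mu}$, and $\g_\lambda\subset\im\ad_\xi\subset\ker\eta$ for $\lambda\neq0$. The $\ad_\xi$-invariance of $\omega$ gives $\omega(\g_\lambda,\g_\mu)=0$ unless $\lambda+\mu=0$. I would also use the Jordan decomposition of the derivation $\ad_\xi$. Its semisimple part $S$ is a derivation, a polynomial in $\ad_\xi$ without constant term, and satisfies $S\xi=0$, $\eta\circ S=0$ and $S\in\sp(V,\omega)$. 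So $S$ induces a genuine grading compatible with $\eta$ and $\omega$.

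Next I use unimodularity. The transverse version should be $\tr(\ad_h|_{\g/\R\xi})=0$, which follows from ordinary unimodularity because $\ad_h\xi\in\ker\eta$ contributes nothing to the diagonal. Choose a nonzero eigenvalue $\lambda$ whose real part is maximal (or, if all eigenvalues are imaginary, work with a pair $\pm\lambda$ and complex conjugation). By $\omega$-duality there are eigenvectors $x\in\g_\lambda$, $y\in\g_{-\lambda}$ with $\omega(x,y)=1$. Then $h=[x,y]\in\g_0$ satisfies $\eta(h)=-1$ and $[\xi,h]=\lambda h-\lambda h=0$. On the other hand, $[\xi,h]=\ad_\xi h$, and decomposing $h=-\xi+h'$ with $h'\in\ker\eta$ shows $\ad_h$ acts on $\g_\lambda$ essentially by $-\ad_\xi$ plus a correction. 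More precisely, $\ad_h=[\ad_x,\ad_y]$, so I would compute $\tr(\ad_h|_{\g_\mu})$ for each $\mu$ by tracking how $\ad_x$ and $\ad_y$ shift degrees. Combining this with $\tr\ad_h=0$ and the invariance of $\omega^n$ under $\ad_h$ should force $\dim\g_\mu=0$ for all $\mu\neq0,\pm\lambda$. It should also force $\dim\g_{\pm\lambda}=1$ and $\g_0=\C\xi$ (modulo checking that $h'$ vanishes).

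Finally, with $\g^\C=\C\xi\oplus\C x\oplus\C y$ and $[\xi,x]=\lambda x$, $[\xi,y]=-\lambda y$, $[x,y]=-\xi$, we have $\g^\C\cong\sl(2,\C)$. So the real form $\g$ is $\sl(2,\R)$ or $\su(2)$, according to whether $\lambda$ is real or imaginary. A mixed-type $\lambda$ is excluded by the same grading argument applied to $\bar\lambda$.

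The main obstacle is the middle step. One must show that a single nonzero eigenvalue, together with the trace condition, kills everything outside $\C\xi\oplus\g_{\lambda}\oplus\g_{-\lambda}$ and makes these eigenspaces one-dimensional. My first attempt is the maximal-real-part choice, so that $[\g_\lambda,\g_\lambda]=0$ and $\ad_x$ maps into strictly higher degrees, making all relevant traces computable blockwise. If that is too weak, I would induct on $\dim\g$ by passing to a suitable quotient or subalgebra $\g_0$ carrying an induced contact-like structure.
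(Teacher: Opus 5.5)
\textbf{There is a genuine gap, at exactly the step you flag as the main obstacle, and the mechanism you propose there cannot work.}

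The element $h=[x,y]$ is a commutator. Hence $\ad_h=[\ad_x,\ad_y]$, and $\tr(\ad_h)=0$ holds in \emph{every} Lie algebra. The only invariance you could add is that of the volume form, $\mathcal{L}_h(\eta\wedge\omega^n)=-\tr(\ad_h)\,\eta\wedge\omega^n$, which is the same vacuous condition; $\omega$ itself is not $\ad_h$-invariant in general. So (transverse) unimodularity, even applied to $h-\eta(h)\xi\in\ker(\ad_\xi\vert_{\h})$, tells you nothing about $h$.

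The blockwise traces you want to track are $\tr(\ad_h\vert_{\g_\mu})=f(\mu-\lambda)-f(\mu)$, with $f(\mu):=\tr(\ad_y\ad_x\vert_{\g_\mu})$. These telescope along $\lambda$-strings, and the hypothesis only sees their total, which is zero anyway. Thus the hypothesis is never used nontrivially. The conclusion $\g_0=\C\xi$, $\dim\g_{\pm\lambda}=1$, $\g_\mu=0$ otherwise, which is the whole content of the theorem, is asserted rather than derived.

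\textbf{What is missing} is an element of $\ker\ad_\xi\cap\ker\eta$ that is not a priori traceless and whose trace can actually be computed. The paper sets $\t=\ker K_s$, $\q=\im K_s$, $A=\ad_\xi\vert_{\q}$ and $\rho(x)=\ad_x\vert_{\q}$. It takes $e\in\t_0$ to be the $\omega_{\t}$-dual of $\ell(x)=\frac{1}{\dim\q}\tr(A^{-1}\rho(x))$. It then shows:
\begin{enumerate}
\item $[\xi,e]=0$;
\item $d_{\t}\ell=\omega_{\t}-\sigma$ with $\iota_e\sigma=0$, which gives $\tr(\ad_e\vert_{\t})=\frac{\dim\t_0}{2}>0$;
\item $\tr(\ad_e\vert_{\q})=\sum_\lambda\tr(T_\lambda)\geq 0$, because $T_\lambda=T_\lambda^2+[T_\lambda,P_\lambda]$ forces $\spec(T_\lambda)\subseteq\{0,1\}$.
\end{enumerate}
The case $\t_0=0$ is handled separately, via the nilradical and the Boothby--Wang classification of semisimple contact Lie algebras.

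\textbf{Three further steps fail as written.}

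First, genuine eigenvectors $x\in\g_\lambda$, $y\in\g_{-\lambda}$ with $\omega(x,y)=1$ need not exist. Write $\ad_\xi=\lambda+N$ on $\g_\lambda$ and $-\lambda+N'$ on $\g_{-\lambda}$. Invariance of $\omega$ gives $\ker N'=(\im N)^{\perp}$. If every Jordan block for $\lambda$ has size at least $2$, then $\ker N\subseteq\im N$, and all such eigenvectors pair to zero. Taking arbitrary vectors of $\g_{\pm\lambda}$ instead only yields $h\in\ker S$, not $[\xi,h]=0$.

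Second, the maximal-real-part choice gives nothing when the spectrum of $\ad_\xi$ is purely imaginary. That is exactly the K-contact/Sasakian situation.

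Third, your fallback induction through the subalgebra $\g_0$ cannot carry the hypothesis along. That subalgebra is the paper's $\t$, which is never unimodular once $\t_0\neq0$ and $\q\neq0$: it contains $e$ with $\tr(\ad_e\vert_{\t})>0$.
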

    \indent Recall that a Lie algebra $\g$ is said to be unimodular if $\tr(\ad_x) = 0$ for all $x \in \g$. There is a well-known equivalence between unimodularity and the validity of Poincaré duality for the Chevalley-Eilenberg cohomology of $\g$. If $\g$ admits a contact form $\eta$, then a similar equivalence for the $\xi$-basic cohomology can be defined, where $\xi$ is the Reeb vector of $(\g,\eta)$: the relevant notion is that of \textit{transverse unimodularity}, meaning that $\tr(M_x) = 0$ for all $x \in \ker(\ad_{\xi} \vert_{\h})$; here, $M_x := \mathrm{pr}_{\h} \circ \ad_x \vert_{\h}$, where $\h := \ker \eta$ and $\mathrm{pr}_{\h}: \g \to \h$ is the linear projection associated with the vector space splitting $\g = \R \xi \oplus \h$. Of course, unimodularity implies transverse unimodularity, but not the other way around. We refer to Subsection \ref{section: cohomology remarks}  for details. 

    \indent Unimodular contact Lie algebras with nilpotent $\ad_{\xi}$ do exist, as seen in Example \ref{ex: Diatta-Foreman}. The statement that contact Lie algebras with nilpotent $\ad_{\xi}$ are unimodular is unsurprisingly false, as seen in Examples \ref{ex: converse of main is false 1} and \ref{ex: converse of main is false 2}. Since K-contact Lie algebras have skew-symmetric $\ad_{\xi}$, we obtain an immediate consequence:  
\begin{corollary} \label{cor: main corollary}
    Centerless unimodular K-contact Lie algebras are either $\sl(2,\R)$ or $\su(2)$. 
\end{corollary}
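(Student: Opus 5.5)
The plan is to deduce the corollary directly from Theorem~\ref{thm: main result}. In a K-contact Lie algebra $(\g,\eta,\xi,\phi,g)$ the Reeb vector is Killing for the invariant metric $g$, so $\ad_{\xi}$ is skew-symmetric with respect to $g$. A skew-symmetric endomorphism of a real inner product space is diagonalizable over $\C$ with purely imaginary eigenvalues. Hence it is nilpotent if and only if it vanishes.

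Now let $\g$ be centerless, unimodular and K-contact, and suppose $\g$ is isomorphic to neither $\sl(2,\R)$ nor $\su(2)$. Unimodularity implies transverse unimodularity, so Theorem~\ref{thm: main result} applies and $\ad_{\xi}$ is nilpotent. Being also skew-symmetric, $\ad_{\xi}=0$. Thus $\xi$ is a nonzero central element ($\eta(\xi)=1$), contradicting $\z(\g)=0$. This is consistent with Proposition~\ref{prop: center in contact Lie algebras}, which says the center is either trivial or spanned by $\xi$.

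For completeness we should confirm that both exceptions actually occur: $\sl(2,\R)$ and $\su(2)$ are centerless and unimodular (being simple), and both admit K-contact (indeed Sasakian) structures. The only step needing care is the skew-symmetry of $\ad_{\xi}$. For this we use the standard fact that $\xi$ is Killing in the K-contact setting. In the left-invariant formulation this reads $g([\xi,x],y)+g(x,[\xi,y])=0$, which follows from $\nabla\xi=-\phi$ (up to sign conventions) together with torsion-freeness: $\ad_\xi=\nabla_\xi-\nabla_{\cdot}\xi$, where $\nabla_\xi$ and $\phi$ are both skew-symmetric.
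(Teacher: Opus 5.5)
Your proof is correct and is essentially the paper's argument: the paper deduces the corollary immediately from Theorem~\ref{thm: main result}, using that $\ad_{\xi}$ is skew-symmetric in the K-contact case, so nilpotency forces $\ad_{\xi}=0$ and hence $\xi$ is central, which contradicts centerlessness. Your extra derivation of skew-symmetry via $\ad_{\xi}=\nabla_{\xi}+\Phi$ is valid but not needed, since skew-symmetry of $\ad_{\xi}$ is already one of the equivalent K-contact conditions in Proposition~\ref{prop: K-contact equivalences}.
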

    \indent The same is true for centerless contact Lie algebras $(\g,\eta)$ admitting a vector space splitting $\g = \ker \ad_{\xi} \oplus \im \ad_{\xi}$, which we refer to as \textit{DS-contact Lie algebras}. The ``DS" part in the name stands for \textit{direct sum}. Notice that not all contact Lie algebras are of this shape, for the splitting is equivalent to the condition $\ker \ad_{\xi}^2 = \ker \ad_{\xi}$: it is thus a constraint on the Jordan form of $\ad_{\xi}$. Section \ref{section: DS-contact} is devoted to the study of this class of contact Lie algebras.  

    \indent For a DS-contact Lie algebra $(\g,\eta)$, set $\t := \ker \ad_{\xi}$, $\q := \im \ad_{\xi}$, and $\t_0 := \t \cap \h$. A straightforward verification shows that the restrictions $\omega_{\t} := - (d_{\g} \eta) \vert_{\t}$ and $\omega_{\q} := - (d_{\g} \eta) \vert_{\q}$ are nondegenerate on $\t_0$ and $\q$, respectively (see Lemma \ref{lemma: simplecticidad}). Moreover, it can be checked just as easily that $\t_0$ has a natural Lie bracket structure for which $\omega_{\t}$ is closed, and that $\t$ is a 1-dimensional central extension of $\t_0$ with associated cocycle $\omega_{\t}$ (we refer to Subsection \ref{section: elementary properties} for details). As an interesting byproduct of the proof of Theorem \ref{thm: main result}, we obtain the~next~result:  
\begin{theorem} \label{thm: t0 is frobenius}
    $\omega_{\t}$ is an exact symplectic form on $\t_0$. Thus, neither $\t_0$ nor $\t$ is unimodular.  
\end{theorem}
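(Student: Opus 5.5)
The plan is to write down an explicit primitive of $\omega_{\t}$ using traces on $\q$, where $\ad_{\xi}$ is invertible. First I record the elementary structure, all of which follows from the DS splitting $\g=\t\oplus\q$.

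\begin{itemize}
\item $\t=\ker\ad_{\xi}$ is a subalgebra, and $\xi\in\t$.
\item For $x\in\t$ and $y\in\g$, the Jacobi identity gives $[x,[\xi,y]]=[\xi,[x,y]]$. Hence $\ad_x$ preserves $\q$ and commutes with $\ad_{\xi}$ on $\q$.
\item $\q\subset\h$, because $\eta([\xi,y])=-d\eta(\xi,y)=0$.
\item $\ad_{\xi}|_{\q}:\q\to\q$ is a linear isomorphism. Its kernel is $\t\cap\q=0$ and it maps $\q$ into $\q$.
\item $\q\neq 0$. Otherwise $\ad_{\xi}=0$, contradicting that $\g$ is centerless (for DS-contact Lie algebras $\xi$ is not central).
\end{itemize}

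Set $A:=(\ad_{\xi}|_{\q})^{-1}\in\End(\q)$. Since every $\ad_x|_{\q}$ with $x\in\t$ commutes with $\ad_{\xi}|_{\q}$, it also commutes with $A$. Define $\alpha\in\t_0^*$ by
\[
\alpha(x):=\tfrac{1}{\dim\q}\,\tr\bigl(\ad_x|_{\q}\circ A\bigr).
\]
For $x,y\in\t_0$ write $[x,y]=[x,y]_{\t_0}+\eta([x,y])\,\xi$, where $[x,y]_{\t_0}=\mathrm{pr}_{\h}[x,y]$ is the bracket of $\t_0$. Two facts now give the key identity.
\begin{itemize}
\item $\tr(\ad_{[x,y]}|_{\q}A)=\tr\bigl([\ad_x|_{\q},\ad_y|_{\q}]A\bigr)=0$. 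This holds because $A$ commutes with $\ad_x|_{\q}$, so the expression is a trace of a commutator.
\item $\tr(\ad_{\xi}|_{\q}A)=\dim\q$.
\end{itemize}
Combining them,
\[
\dim\q\;\alpha([x,y]_{\t_0})=-\eta([x,y])\dim\q .
\]
Since $\omega_{\t}(x,y)=-d\eta(x,y)=\eta([x,y])$, this says $\omega_{\t}(x,y)=-\alpha([x,y]_{\t_0})=d_{\t_0}\alpha(x,y)$. So $\omega_{\t}=d_{\t_0}\alpha$ is exact. Nondegeneracy of $\omega_{\t}$ on $\t_0$ is Lemma~\ref{lemma: simplecticidad}, and closedness was already noted, so $\omega_{\t}$ is an exact symplectic form. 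The main care point is the bookkeeping of signs and normalizations in $d\eta$ and in the identification $[x,y]_{\t_0}=\mathrm{pr}_{\h}[x,y]$; the trace argument itself is robust.

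For the second part, let $2m=\dim\t_0$. The volume form $\omega_{\t}^m=d(\alpha\wedge\omega_{\t}^{m-1})$ is exact, so $H^{2m}(\t_0)=0$. For a unimodular Lie algebra the top-degree Chevalley–Eilenberg cohomology is one-dimensional (Poincaré duality, Subsection~\ref{section: cohomology remarks}), so $\t_0$ is not unimodular.

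For $\t$, I use that $\xi$ is central in $\t$. Hence for $x\in\t_0$ the matrix of $\ad^{\t}_x$ in the splitting $\t=\R\xi\oplus\t_0$ is block triangular: the $\xi$ entry is zero, and the $\t_0$ block is $\ad^{\t_0}_x$. Therefore $\tr\ad^{\t}_x=\tr\ad^{\t_0}_x$. Choosing $x\in\t_0$ with nonzero trace shows that $\t$ is not unimodular either.
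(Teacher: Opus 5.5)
Your proposal is correct and is essentially the paper's argument. Your primitive $\alpha$ is exactly the paper's form $\ell(x)=\frac{1}{\dim\q}\tr(A^{-1}\rho(x))$ (your $A$ is the inverse of the paper's $A=\ad_{\xi}\vert_{\q}$), and your trace-of-commutator step is the paper's Proposition~\ref{prop: propiedades de ell}~(ii) combined with $\sigma=0$ from Lemma~\ref{lemma: se usa luego} when $C=0$. The non-unimodularity of $\t_0$ and $\t$ likewise follows from top-degree cohomology and the contactization block form, as in Subsection~\ref{section: cohomology remarks} and Remark~\ref{obs: iff unimod, nil, solv}. The one thing to state explicitly is that your second part needs $\t_0\neq 0$ (so $m\geq 1$), which excludes $\sl(2,\R)$ and $\su(2)$; the paper also assumes this implicitly.
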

    \indent Lie algebras admitting exact symplectic forms are known as \textit{Frobenius Lie algebras} in the literature. An explicit primitive of $\omega_{\t}$ can be given (see Theorem \ref{thm: t_0 is frobenius}). The Frobenius Lie algebra $(\t_0, \omega_{\t})$ is somewhat restricted (see Corollary \ref{cor: some obstructions}). A natural question, which we call the \textit{realization problem}, can be posed: which Frobenius Lie algebras can be realized as the $\t_0$-part of some DS-contact Lie algebra? For a given Frobenius Lie algebra $(\a, \omega_{\a})$, set
\begin{gather*} 
    \mathcal{R}(\a, \omega_{\a}) := \{d \in 2 \mathbb N \mid (\a, \omega_{\a}) \text{ is realizable with $\dim \q = d$} \}, \\ \nu(\a) := \min\{\dim J \mid \text{$J$ is a nonzero ideal in $\a$}\}.
\end{gather*}
    \noindent The next result summarizes what has been proven in relation to the realization problem in this article (see Proposition \ref{prop: realizable 1}, Proposition \ref{prop: realizable 2}, and Corollary \ref{cor: obstruction}): 
\begin{proposition} \label{prop: the realization problem}
    \phantom{.}
\begin{enumerate} [\rm (i)]
    \item $2\in \mathcal{R}(\a, \omega_{\a})$ for any Frobenius Lie algebra $(\a, \omega_{\a})$. 
    \item $\mathcal{R}(\a, \omega_{\a}) = 2\mathbb N$ for any Frobenius Lie algebra $(\a, \omega_{\a})$ with a one-dimensional ideal. 
    \item If $k^2 < \nu(\a)$ for some $k \geq 2$ then $2k \notin \mathcal{R}(\a, \omega_{\a})$. 
\end{enumerate} 
\end{proposition}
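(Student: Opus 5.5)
The plan is to work inside the structure theory of Section~\ref{section: DS-contact}. A DS-contact Lie algebra is $\g=\R\xi\oplus\t_0\oplus\q$, and the bracket is determined by four pieces of data. First, the central extension $\t=\R\xi\oplus_{\omega_\t}\t_0$. Second, the operator $D:=\ad_\xi|_\q$, which is invertible. Third, the action $\rho\colon\t_0\to\End(\q)$, $\rho(x)=\ad_x|_\q$; this action lands in $\q$ because $\q=\im\ad_\xi$ is a $\t$-module, since $\t$ commutes with $\xi$. Fourth, the bracket $[\q,\q]$, whose $\xi$-component is forced to be $-\omega_\q$, taking $\eta=\xi^*$.

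Realizing a given Frobenius algebra $(\a,\omega_\a=d\varphi)$ means choosing $D,\rho,[\q,\q]$ so that the Jacobi identity holds and $\g$ is centerless. Nondegeneracy of $d\eta$ on $\a\oplus\q$ then gives the contact condition automatically. The key observation is the Jacobi identity on $(x,y,u)$ with $x,y\in\a$ and $u\in\q$. It says that $\xi\mapsto D$, $x\mapsto\rho(x)$ is a representation of the central extension $\t$, i.e.
\[
[\rho(x),\rho(y)]=\rho([x,y]_\a)+\omega_\a(x,y)\,D .
\]
This is exactly where exactness enters.

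For (i), take $\q=\R^2$ with basis $e_1,e_2$, and let $D=J$ be the rotation. Set $\rho(x)=\varphi(x)J$ (with the sign fixed by the convention $\omega_\a=\pm d\varphi$) and $[e_1,e_2]=\xi$. The displayed identity then reduces to $0=(\varphi([x,y])+\omega_\a(x,y))J$, which holds because $\omega_\a$ is exact.

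The remaining Jacobi identities need checking:
\begin{itemize}
\item $(\xi,e_1,e_2)$ and $(x,e_1,e_2)$ hold since $\tr J=0$.
\item $(\xi,x,u)$ holds since $\rho(x)$ commutes with $J$.
\end{itemize}
For centerlessness, suppose $z$ is central. Then $\xi$ is not central, the $\q$-part of $z$ must vanish since $J$ is invertible, and an element of $\a$ commuting with $\a$ is killed by $\omega_\a(x,\cdot)$. Hence $z=0$.

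For (ii), the naive generalization to $\q=\R^{2k}$ uses a compatible complex structure $J\in\sp(\q,\omega_0)$, $\rho(x)=\varphi(x)J$ and $[u,v]=\omega_0(u,v)\xi$. This fails on the triple $(u,v,w)$: the cyclic sum $\sum\omega_0(u,v)Jw$ is nonzero for $k\ge2$. I would repair this using the one-dimensional ideal $\R z\subset\a$, which satisfies $[x,z]=\lambda(x)z$.

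The repair has two parts. First, add an $\a$-component $[u,v]\mathrel{+}=\beta(u,v)z$. Second, modify $\rho$ by a correction $\rho(x)=\varphi(x)J+\mu(x)S$, where $\mu$ is supported on $z$ and $S\in\sp(\q)$ commutes with $J$. The goal is to choose these so that the cyclic sum $\omega_0(u,v)(J+\dots)w+\beta(u,v)\rho(z)w$ vanishes. A natural try is to make $\rho(z)$ proportional to a nilpotent-plus-$J$ operator for which $\omega_0\otimes J+\beta\otimes\rho(z)$ is cyclically zero. This step, namely finding $\beta$ and $S$ that are simultaneously compatible with the $(x,y,u)$ and $(x,u,v)$ identities, is where I expect the main difficulty. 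I would first work out $k=2$ by hand and then induct on $k$ by taking orthogonal sums of such blocks.

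For (iii), use the representation $\rho$ again. Since $[\xi,\t_0]=0$, each $\rho(x)$ commutes with $D$. By Jacobi on $(x,u,v)$ with the $\xi$-component, each $\rho(x)$ also lies in $\sp(\q,\omega_\q)$ modulo a multiple of $D$. Hence $\rho$ maps $\a$ into the centralizer of $D$ in $\mathfrak{gl}(\q)$ intersected with (a one-dimensional extension of) $\sp(\q)$. I would show this target has dimension at most $k^2$, which is the centralizer-of-$J$ count $\dim\mathfrak u(p,q)=k^2$, bounding nonsemisimple $D$ by semicontinuity.

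It then remains to show that $\rho$ is injective. Its kernel $K$ is an ideal of $\a$. If $K\ne0$, then Jacobi on $(x,u,v)$ with $x\in K$ gives $[x,[u,v]]=0$. Projecting to $\t$ and using that $[\q,\q]$ spans enough of $\t$ for $\omega_\a(x,\cdot)$ to vanish contradicts nondegeneracy. So either $\dim\a\le k^2$, which is impossible since $\nu(\a)\le\dim\a$, or $\ker\rho$ is a nonzero ideal of codimension at most $k^2$. The last case must be sharpened to yield an ideal of dimension at most $k^2$, contradicting $k^2<\nu(\a)$; the hypothesis $k\ge2$ is used to exclude the scalar case of (i).
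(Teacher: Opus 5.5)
Your part (i) is correct and is the paper's construction in disguise. The paper takes the direct sum $\a\oplus(\R\xi\oplus\q)$ with $\eta=\xi^*-\varphi$. After the change of basis $x\mapsto x+\varphi(x)\xi$, which maps $\a$ onto $\t_0$, this becomes exactly your bracket with $\rho(x)=\varphi(x)J$.

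Part (ii) has a genuine gap: you never produce a bracket, and the ansatz you propose cannot work for any $\mu,S$. The $\xi$-component of Jacobi on $(x,u,v)$ is Lemma \ref{lemma: casi todo jacobi}(viii): $(\rho(x)^*\omega_\q)(u,v)=\omega_\a(x,\beta(u,v))=\beta(u,v)\,\omega_\a(x,z)$. If every $\rho(x)=\varphi(x)J+\mu(x)S$ lies in $\sp(\q,\omega_\q)$, the left side vanishes for all $x$. Nondegeneracy of $\omega_\a$ then forces $\beta\equiv 0$. You are back to the naive bracket, which fails on $(u,v,w)$; indeed Corollary \ref{cor: beta neq when dim q geq 4} says $\beta=0$ forces $\dim\q=2$. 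The missing ingredient is a \emph{non-symplectic scalar} part of $\rho$. The paper does the following:
\begin{itemize}
\item Write $[x,z]=\chi(x)z$.
\item Normalize the primitive so that $\varphi(z)=-1$. This is possible since $\omega_\a(x,z)=-\chi(x)\varphi(z)$ forces $\varphi(z)\neq 0$.
\item Set $\rho(x)=\tfrac{\chi(x)}{2}\Id+\varphi(x)A$ and $[u,v]=\omega_\q(u,v)(\xi+z)$.
\end{itemize}
Then $\rho(z)=-A$, so $\xi+z$ acts trivially on $\q$ and $(u,v,w)$ is automatic. The triple $(x,u,v)$ holds because $[x,\xi+z]=\chi(x)(\xi+z)$ and $\rho(x)^*\omega_\q=\chi(x)\omega_\q$. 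The triple $(x,y,u)$ holds because $\chi$ is a character and $d\varphi=\omega_\a$. This works for every $\dim\q$ at once, so no induction on blocks is needed.

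In (iii) several steps are false, and the strategy targets the wrong object.
\begin{itemize}
\item The $\xi$-component of Jacobi on $(x,u,v)$ does not place $\rho(x)$ in $\sp(\q,\omega_\q)$ modulo a multiple of $D$. Since $D\in\sp(\q,\omega_\q)$, that would just say $\rho(x)\in\sp(\q,\omega_\q)$. By Corollary \ref{cor: b and b-omega}(iii) this holds only for $x\in\b^{\omega}$, and for all $x$ it would force $\b=0$, i.e.\ $\dim\q=2$. What the identity actually says is $\rho(x)^*\omega_\q=\omega_\t(x,\beta(\cdot,\cdot))$.
\item The centralizer of $D$ in $\mathfrak{gl}(\q)$ can have dimension $2k^2$; for $D=J$ it is $\mathfrak{gl}(k,\C)$.
\item $\rho$ is not injective in general. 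Jacobi only gives $\ker\rho\subseteq\b^{\omega}$, because the $\t_0$-part of $[\q,\q]$ is $\b=\im\beta$. This has dimension at most $k^2$ and need not span $\t_0$. In the construction for (ii), $\ker\rho=\ker\chi\cap\ker\varphi\neq 0$ once $\dim\a\geq 4$.
\item An ideal of small \emph{codimension} says nothing about $\nu(\a)$.
\end{itemize}
The missing idea is to use $\b$ itself:
\begin{itemize}
\item By Lemma \ref{lemma: casi todo jacobi DS}(iii), $[x,\beta(u,v)]=\beta(\rho(x)u,v)+\beta(u,\rho(x)v)$, so $\b$ is an ideal of $(\t_0,\alpha)\cong\a$.
\item By Lemma \ref{lemma: casi todo jacobi DS}(i), $\beta$ vanishes on $\im(\alt^2 A)$. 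Hence Remark \ref{obs: dim ker (alt^2 A) leq m^2} gives $\dim\b\leq\dim\ker(\alt^2 A)\leq k^2$.
\item For $k\geq 2$, $\b\neq 0$ by Corollary \ref{cor: beta neq when dim q geq 4}. If $\beta=0$, then $\R\xi\oplus\q$ is a contact subalgebra with $\ker K_s=\R\xi$, hence three-dimensional.
\end{itemize}
Thus $\nu(\a)\leq\dim\b\leq k^2$, and this is where $k\geq 2$ genuinely enters.
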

    \indent Another interesting byproduct of the proof of Theorem \ref{thm: main result} is the following result, which also generalizes \cite[Proposition 8.1]{ACN}. Here, $\g$ is \textit{not} assumed to be DS-contact:  
\begin{proposition} \label{prop: main proposition}
    Let $(\g,\eta)$ be a contact Lie algebra for which $\ker (K_s) = \R \xi$, where $K_s$ denotes the semisimple part of $\ad_{\xi}$. Then $\g$ is isomorphic to either $\su(2)$ or $\sl(2,\R)$. Thus, $\g$ is three-dimensional and simple, and admits compatible Sasakian structures. 
\end{proposition}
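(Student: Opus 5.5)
The plan is to complexify and study $\g^{\C}$ through the generalized eigenspace decomposition of $K:=\ad_{\xi}$, so that $\g^{\C}=\bigoplus_{\lambda}\g_{\lambda}$ with $[\g_{\lambda},\g_{\mu}]\subseteq\g_{\lambda+\mu}$. Each $\g_\lambda$ is also the $\lambda$-eigenspace of the semisimple part $K_s$, which is again a derivation. The hypothesis $\ker K_s=\R\xi$ says exactly that $\g_0=\C\xi$.

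\textbf{Step 1: the pairing.} Since $\eta([\xi,x])=-d\eta(\xi,x)=0$, $\eta$ kills $\im K\supseteq\bigoplus_{\lambda\neq0}\g_\lambda$. Hence $\eta$ is supported on $\g_0=\C\xi$. Put $\omega:=-d\eta$, so $\omega(x,y)=\eta([x,y])$; this vanishes on $\g_\lambda\times\g_\mu$ unless $\lambda+\mu=0$. Nondegeneracy of $\omega$ on $\h^{\C}=\bigoplus_{\lambda\ne0}\g_\lambda$ then gives a perfect pairing $\g_\lambda\times\g_{-\lambda}\to\C$. The same computation shows $K$ is $\omega$-skew, since $\omega(Kx,y)+\omega(x,Ky)=\eta([\xi,[x,y]])=0$. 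Finally, because $\g_0=\C\xi$, for $x\in\g_\lambda$ and $y\in\g_{-\lambda}$ we get $[x,y]=\omega(x,y)\,\xi$.

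\textbf{Step 2: an $\sl_2$-triple.} The spectrum is nonempty away from $0$ because $\dim\g\geq3$. Fix a generic real-linear functional $\varphi$ on $\C$ and let $\lambda$ be the eigenvalue maximizing $\varphi$, so $\varphi(\lambda)>0$. In the $\omega$-skew operator $K$ on $\g_\lambda\oplus\g_{-\lambda}$, take $x$ at the top of a Jordan chain of maximal length in $\g_\lambda$ and pair it with a suitable vector in $\g_{-\lambda}$. Skewness gives $\omega(\ker(K-\lambda),\im(K+\lambda))=0$, which lets me choose genuine eigenvectors $x\in\ker(K-\lambda)$ and $y\in\ker(K+\lambda)$ with $\omega(x,y)=1$. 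Then $[\xi,x]=\lambda x$, $[\xi,y]=-\lambda y$ and $[x,y]=\xi$, so $\s:=\Span\{\xi,x,y\}$ is a subalgebra isomorphic to $\sl(2,\C)$. If the maximal Jordan blocks do not pair, I would fall back on running the argument with $K_s$ in place of $K$.

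\textbf{Step 3 (main obstacle): $\g^{\C}=\s$.} I regard $\g^{\C}$ as an $\s$-module and use maximality of $\lambda$. For $z\in\g_\mu$ with $\varphi(\mu)>0$, $\mu\ne\lambda$, we have $[z,x]\in\g_{\mu+\lambda}=0$. Jacobi then gives $[x,[y,z]]=Kz$, which is invertible on $\g_\mu$. I would push this through $\sl_2$-representation theory: the $\C\xi$-weights of $\g^{\C}$ under $\s$ are $\lambda$-strings. Complementary $\s$-submodules meet $\g_0=\C\xi$, and the $\C\xi$-part is already inside $\s$. So every irreducible summand other than the adjoint one has no zero weight, forcing its weights to be odd half-multiples of $\lambda$. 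The task is to rule such summands out. I expect to use the pairing of Step 1, namely that any $\g_\mu$ with $\mu\ne0$ must bracket with $\g_{-\mu}$ nontrivially into $\C\xi$. This is incompatible with an $\s$-invariant complement $V$, since $[V,V]\cap\g_0\neq0$ contradicts $\omega(V,V)=\eta([V,V])$ vanishing by $\s$-invariance. Verifying this invariance is the delicate point. I expect it to follow because $\omega(V,\s)=0$ and $\eta$ is $\s$-equivariant up to the $\xi$-direction.

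\textbf{Step 4: real forms.} Once $\g^{\C}\cong\sl(2,\C)$, $\g$ is a real form of it, hence $\su(2)$ or $\sl(2,\R)$. Both are three-dimensional and simple, and the standard left-invariant structures show they carry compatible Sasakian structures, as in \cite{ACN}.
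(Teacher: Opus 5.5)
Your route (weight decomposition of $\g^{\C}$ under $\ad_\xi$ with $\g_0=\C\xi$, an $\sl(2,\C)$-triple, then $\sl_2$-representation theory) differs from the paper's. The paper shows that the nilradical is zero: it is $K_s$-stable, Engel excludes $\xi\in\n$ because $K$ is invertible on $\h$, and $\n\subseteq\h$ forces $\iota_x d\eta\vert_{\h}=0$ for $x\in\n$. So $\g$ is semisimple, and the paper then quotes Boothby--Wang. Your route could avoid that citation, but Steps 2 and 3 each rest on a mechanism that does not work.

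\textbf{Step 2.} Skewness of $K$ is an obstruction here, not a tool. If all Jordan blocks of $K$ on $\g_\lambda$ (hence on the dual space $\g_{-\lambda}$) have size $\geq 2$, then $\ker(K+\lambda)\subseteq\im(K+\lambda)$. Your identity $\omega(\ker(K-\lambda),\im(K+\lambda))=0$ then gives $\omega(\ker(K-\lambda),\ker(K+\lambda))=0$. Passing to $K_s$ does not help, since a $K_s$-eigenvector $x$ need not satisfy $[\xi,x]=\lambda x$. The missing input is the Jacobi identity combined with maximality of $\lambda$. Since $[\g_\lambda,\g_\lambda]\subseteq\g_{2\lambda}=0$, Jacobi on $(x,x',y)$ with $x,x'\in\g_\lambda$, $y\in\g_{-\lambda}$ gives $\omega(x',y)\,Kx=\omega(x,y)\,Kx'$. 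Injectivity of $K$ on $\g_\lambda$ together with the perfect pairing then forces $\dim\g_{\pm\lambda}=1$. Hence $K$ acts by $\pm\lambda$ on $\g_{\pm\lambda}$, and $\mathfrak{s}=\Span\{\xi,x,y\}$ is an honest copy of $\sl(2,\C)$.

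\textbf{Step 3.} The assertion that $\omega(V,V)=\eta([V,V])$ vanishes ``by $\mathfrak{s}$-invariance'' has no basis, for three reasons. First, $\omega$ is not $\mathfrak{s}$-invariant: e.g.\ $\omega([x,\xi],y)+\omega(\xi,[x,y])=-\lambda\neq0$. Second, $\eta\vert_{\mathfrak{s}}$ is not an invariant functional: it equals $1$ on $\xi=[x,y]$, while invariant functionals vanish on $[\mathfrak{s},\mathfrak{s}]=\mathfrak{s}$. Third, on $V$ one has $\omega([z,u],v)+\omega(u,[z,v])=\eta([z,[u,v]])$, so invariance there already presupposes knowledge of $[V,V]$.

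Equivariance alone permits a nonzero $\xi$-component. Write $V\cong V_1\otimes\C^m$. The $\mathfrak{s}$-equivariant maps $\Lambda^2V\to\mathfrak{s}$ are $(v\otimes a)\wedge(w\otimes b)\mapsto B(a,b)\,v\cdot w$, with $B$ skew on $\C^m$ and $S^2V_1\cong\mathfrak{s}$. These hit $\C\xi$ on $\g_{\lambda/2}\times\g_{-\lambda/2}$ as soon as $B\neq0$. Note also that $\omega(V,\mathfrak{s})=0$ plus nondegeneracy makes $\omega\vert_V$ nondegenerate, so showing $[V,V]=0$ is exactly the point to be proven.

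What closes the argument is again Jacobi, now on triples in $V$. For a symplectic basis $e_1,e_2$ of $V_1$, the Jacobi identity on $(e_1\otimes a,\,e_1\otimes b,\,e_2\otimes a)$ yields a nonzero multiple of $B(a,b)\,e_1\otimes a$. So $B=0$ and $[V,V]=0$, which contradicts nondegeneracy of $\omega\vert_V$ unless $V=0$.

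With these two Jacobi computations added, your Steps 1--4 give a self-contained proof.
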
 
    \indent An important application of Theorem \ref{thm: main result} is cohomological in nature, and concerns the study of Lefschetz-type conditions for contact Lie algebras. Recall that the usual Lefschetz condition is defined for symplectic Lie algebras: a $2n$-dimensional symplectic Lie algebra $(\h, \omega)$ is said to be $s$-Lefschetz, where $0 \leq s \leq n-1$, if the maps
\begin{align*}
    L^{n-k}:H^k(\h) \to H^{2n-k}(\h), \quad L^{n-k}([\alpha]) := [\omega^{n-k} \wedge \alpha]
\end{align*}
    \noindent are bijective for all $k \leq s$. There are two main contact analogues for compact contact manifolds: one using basic cohomology, as put forward by \cite{KA}, and another using de Rham cohomology, as introduced in \cite{Cagliari}. Both can be considered at the Lie algebra level: the former for basic cohomology, the latter for Chevalley-Eilenberg cohomology. Remarkably, in both \cite{KA} and \cite{Cagliari} it is shown that compact Sasakian manifolds are $(n-1)$-Lefschetz, or \textit{hard-Lefschetz}, in each corresponding sense. It is proven in \cite[Theorem 6.3]{Lin} that, for a compact $(2n+1)$-dimensional K-contact manifold and $0 \leq s \leq n-1$, the basic $s$-Lefschetz condition holds if and only if the de Rham $s$-Lefschetz condition holds. A similar equivalence can be formulated at the Lie algebra level.

    \indent Corollary \ref{cor: main corollary} ensures that, in the centerless K-contact setting, the two versions of the Lefschetz condition fail simultaneously already at the $s = 0$ level (except for $\sl(2,\R)$ and $\su(2)$, which are Sasakian and hence hard-Lefschetz). More generally, Theorem \ref{thm: main result} ensures that the same simultaneous failure holds for any contact Lie algebra for which $\ad_{\xi}$ is not nilpotent, even though Lin's equivalence is not known to hold in such generality. The Lefschetz questions considered here, both in the symplectic and contact settings, are connected with previous work of the author \cite{AG1, AG2, AG3}. The status of Lin's equivalence for general contact Lie algebras remains open; in particular, to the best of the author's knowledge, there has been no systematic study of any version of the contact Lefschetz condition when $\ad_{\xi}$ is nilpotent. We intend to address this in future work.  

    \indent The article is organized as follows: Section \ref{section: preliminaries} contains necessary preliminaries to keep the discussion reasonably self-contained; while the information in Subsections \ref{section: contactization} and \ref{section: sasaki and K-contact} is well-known, Subsection \ref{section: cohomology remarks} contains somewhat novel results. In Section \ref{section: a useful decomposition} a general vector space decomposition of contact Lie algebras is discussed, as well as many consequences stemming from the interaction between the Lie bracket and said decomposition, all serving to establish Theorem \ref{thm: main result} and the subsequent study of DS-contact Lie algebras; also, the proof of Proposition \ref{prop: main proposition} is given. The full proof of Theorem \ref{thm: main result} is contained in Section \ref{section: proof of the main result}. The notion of DS-contact Lie algebras is introduced in Section \ref{section: DS-contact}, and studied as indicated above: Subsection \ref{section: elementary properties} lists elementary properties of DS-contact Lie algebras, Subsection \ref{section: on frobenius lie algebras} contains the proof of Theorem \ref{thm: t0 is frobenius} and Proposition \ref{prop: the realization problem}, Subsection \ref{section: a classification result in dimension five} contains a classification of 5-dimensional (centerless) DS-contact Lie algebras, and Subsection \ref{section: more cohomology remarks} contains a refinement of some ideas presented in Subsection \ref{section: cohomology remarks}. 

\textit{Acknowledgements.} The author is very grateful to Dr. Adrián Andrada for his continued guidance during the preparation of this article, for suggesting the question that led to the main result, for many valuable comments, and for carefully reading the manuscript. The author also wishes to thank Dr. Ignacio Bono Parisi and Facundo Javier Gelatti for their unwavering support and encouragement.

\section{Preliminaries on contact Lie algebras} \label{section: preliminaries}

\indent Let $\g$ be a finite-dimensional real Lie algebra. A \textit{contact form} on $\g$ is a $1$-form $\eta \in \alt^1 \g^*$ with
\begin{align*}
    \eta \wedge (d \eta)^n \neq 0, \quad \text{where $\dim \g = 2n +1$}.
\end{align*}
    \noindent The pair $(\g, \eta)$ is called a \textit{contact Lie algebra}. In every contact Lie algebra $(\g,\eta)$, the conditions
\begin{align*}
    \iota_{\xi} \eta = 1 \quad \text{ and } \quad \iota_{\xi} (d_{\g} \eta) = 0
\end{align*}
    \noindent uniquely determine a vector $\xi \in \g$, called the \textit{Reeb vector} of $(\g,\eta)$. 
\begin{lemma}
    The following conditions hold.   
\begin{multicols}{2}
\begin{enumerate} [\rm (i)]
    \item $\ad_{\xi}( \ker \eta) \subseteq \ker \eta$.
    \item $\im(\ad_{\xi}) \subseteq \ker \eta$.
\end{enumerate}
\end{multicols} 
\end{lemma}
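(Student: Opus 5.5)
Both items follow directly from the Chevalley--Eilenberg formula $d\eta(x,y) = -\eta([x,y])$ for a $1$-form $\eta \in \g^*$, combined with the defining property $\iota_{\xi} d\eta = 0$ of the Reeb vector. I would prove (ii) first and obtain (i) as an immediate consequence.

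For (ii), take an arbitrary $y \in \g$. Then
\begin{align*}
    \eta(\ad_{\xi} y) = \eta([\xi,y]) = -d\eta(\xi,y) = -(\iota_{\xi} d\eta)(y) = 0,
\end{align*}
so $\im(\ad_{\xi}) \subseteq \ker \eta$. The only point needing care is the sign convention for $d\eta$, and the sign plays no role since the quantity vanishes anyway.

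For (i), note that $\ad_{\xi}(\ker\eta) \subseteq \im(\ad_{\xi}) \subseteq \ker\eta$ by (ii).

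There is no real obstacle here. The only step requiring attention is stating the formula for $d$ on $1$-forms consistently with the conventions the paper uses later.
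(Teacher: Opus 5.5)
Your proof is correct and essentially the paper's: the paper writes $0=\iota_{\xi}d_{\g}\eta=\mathcal{L}_{\xi}\eta=\ad_{\xi}^*\eta$ via Cartan's formula, which is the identity $\eta\circ\ad_{\xi}=0$ that you get directly from $d\eta(x,y)=-\eta([x,y])$, and that sign convention matches the paper's. Deducing (i) from (ii), rather than checking both separately as the paper does, is a harmless streamlining.
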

\begin{proof}
    Notice that 
\begin{align} \label{eq: Seta = 0}
    0 = \iota_{\xi}(d_{\g} \eta) = (d_{\g} \circ \iota_{\xi} + \iota_{\xi} \circ d_{\g})(\eta) = \mathcal{L}_{\xi} \eta = \ad_{\xi}^* \eta. 
\end{align}
\begin{enumerate} [\rm (i)]
    \item If $x \in \ker \eta$ then $\eta([\xi,x])=\eta(\ad_{\xi} x) = \ad_{\xi}^*\eta(x) = 0$. Thus $\ad_{\xi}( \ker \eta) \subseteq \ker \eta$.
    \item If $y \in \im \ad_{\xi}$ then $y = \ad_{\xi} x$ for some $x \in \g$, and then $\eta(y) = \eta( \ad_{\xi} x )= \ad_{\xi}^* \eta(x) = 0$. Therefore, $\mathrm{im}(\ad_{\xi}) \subseteq \ker \eta$. \qedhere 
\end{enumerate} 
\end{proof}
    \indent Thus, $\h := \ker \eta$ is an $\ad_{\xi}$-invariant subspace of $\g$. Also, the contact condition on $\eta$ implies that $\omega := - (d_{\g} \eta)$ is a nondegenerate $2$-form on $\h$. Let $p_{\h}\colon\g \to \h$ denote the canonical linear projection associated with the vector space splitting $\g = \R \xi \oplus \h$, and denote
\begin{align*}
    \alt_{\xi}^* \g^* := \{ \alpha \in \alt^* \g^* \mid \iota_{\xi} \alpha = 0\}. 
\end{align*}
    \indent The following result follows from direct, unenlightnening computations. 
\begin{lemma} \label{lemma: an identification}
    The induced map $p_{\h}^*:\alt^* \h^* \to \alt^* \g^*$ is a linear isomorphism onto $\alt_{\xi}^* \g^*$. Its inverse is the map $\mathrm{res}:\alt_{\xi}^* \g^* \to \alt^* \h^*$ given by restriction $\mathrm{res}(\alpha) := \alpha \vert_{\h}$. 
\end{lemma}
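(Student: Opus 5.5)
The plan is to prove Lemma~\ref{lemma: an identification} by checking the two maps are mutually inverse, working degree by degree. First I will confirm that $p_{\h}^*$ lands in $\alt_{\xi}^*\g^*$. For $\beta \in \alt^k \h^*$, the pullback is $(p_{\h}^*\beta)(x_1,\dots,x_k) = \beta(p_{\h}x_1,\dots,p_{\h}x_k)$. Since $p_{\h}\xi = 0$, inserting $\xi$ in the first slot gives zero, so $\iota_{\xi}(p_{\h}^*\beta)=0$. In degree $0$ both maps are the identity on $\R$, so that degree needs no further attention.

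Next I will check the two compositions.
\begin{itemize}
    \item $\mathrm{res}\circ p_{\h}^* = \Id$. This is immediate, because $p_{\h}$ restricts to the identity on $\h$.
    \item $p_{\h}^*\circ \mathrm{res} = \Id$ on $\alt_{\xi}^*\g^*$. Take $\alpha$ with $\iota_{\xi}\alpha = 0$ and decompose each $x_i = \eta(x_i)\xi + p_{\h}x_i$. Expanding $\alpha(x_1,\dots,x_k)$ multilinearly, every term containing at least one $\xi$ vanishes: by alternation $\xi$ can be moved to the first slot, and there $\iota_{\xi}\alpha=0$ kills it. Hence $\alpha(x_1,\dots,x_k) = \alpha(p_{\h}x_1,\dots,p_{\h}x_k) = (p_{\h}^*(\alpha\vert_{\h}))(x_1,\dots,x_k)$.
\end{itemize}

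From these two facts it follows that $p_{\h}^*$ is injective with image exactly $\alt_{\xi}^*\g^*$, and that $\mathrm{res}$ is its inverse. Linearity of both maps is clear.

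No step is a genuine obstacle. The only point needing care is the multilinear expansion in the second composition, which I will handle by noting that $\alpha(\dots,\xi,\dots)$ equals $\pm(\iota_{\xi}\alpha)(\dots)=0$.
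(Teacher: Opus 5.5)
Your proposal is correct. It is essentially the direct computation the paper has in mind: the paper gives no argument beyond saying the result follows from direct, unenlightening computations, and your degree-by-degree check of $\mathrm{res}\circ p_{\h}^* = \Id$ and $p_{\h}^* \circ \mathrm{res} = \Id$, using $x = \eta(x)\xi + p_{\h}x$ and $\iota_{\xi}\alpha = 0$, supplies exactly those omitted details.
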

    \indent We identify $\alt_{\xi}^* \g^* \cong \alt^* \h^*$ in what follows. Notice that this is \textit{not} an isomorphism of cochain complexes, since in general there is no natural differential on $\alt^* \h^*$. 

\subsection{Contactization} \label{section: contactization}

    \indent The class of contact Lie algebras with nontrivial center is structurally well understood.  
\begin{proposition} \cite[Proposition 1]{AFV} \label{prop: center in contact Lie algebras}
    If the center $\z(\g)$ of $(\g,\eta)$ is nontrivial then $\z(\g) = \R\xi$. 
\end{proposition}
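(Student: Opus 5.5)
Let $z \in \z(\g)$ be a nonzero central element. The plan is to show first that $z$ lies in $\R\xi$, and then that $\xi$ itself is central.

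\textbf{Step 1: $z \in \R\xi$.} Since $z$ is central, for every $y \in \g$ we have
\begin{align*}
    (\iota_z d_{\g}\eta)(y) = d_{\g}\eta(z,y) = -\eta([z,y]) = 0 .
\end{align*}
Hence $\iota_z(d_{\g}\eta) = 0$. Recall that $d_{\g}\eta$ is nondegenerate on $\h = \ker\eta$. Therefore the kernel of the bilinear form $d_{\g}\eta$ on all of $\g$ is exactly one-dimensional, and it is spanned by $\xi$. Indeed, $\iota_\xi d_{\g}\eta = 0$, and if $v = a\xi + h$ with $h \in \h$ satisfies $\iota_v d_{\g}\eta = 0$, then $\iota_h d_{\g}\eta = 0$. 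Restricting to $\h$ and using nondegeneracy gives $h = 0$. We conclude that $z = c\,\xi$ for some $c \neq 0$.

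\textbf{Step 2: $\xi$ is central.} Since $z = c\,\xi$ is central and $c \neq 0$, the vector $\xi$ is central as well. So $\R\xi \subseteq \z(\g)$.

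\textbf{Conclusion.} Step 1 gives $\z(\g) \subseteq \R\xi$, and Step 2 gives the reverse inclusion, so $\z(\g) = \R\xi$.

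The only point requiring care is the identification of the kernel of $d_{\g}\eta$ with $\R\xi$ in Step 1. It follows directly from the nondegeneracy of $d_{\g}\eta$ on $\h$, which is equivalent to the contact condition $\eta\wedge(d_{\g}\eta)^n \neq 0$. No serious obstacle is expected.
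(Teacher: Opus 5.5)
Your proof is correct and follows essentially the same route as the paper: you decompose a central element along $\g = \R\xi \oplus \h$, use $\iota_z(d_{\g}\eta) = 0$, and conclude from the nondegeneracy of $d_{\g}\eta$ on $\h$ that the $\h$-component vanishes. Your Step 2 states explicitly the reverse inclusion $\R\xi \subseteq \z(\g)$, which the paper leaves implicit.
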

\begin{proof}
    Let $z \in \z(\g)$ be written as $z = a \xi + x$ for some $a \in \R$ and $x \in \ker \eta$. Since  $\iota_{\xi} (d_{\g} \eta) = 0$ (due to the definition of $\xi$) and $\iota_{z} (d_{\g} \eta) = 0$ (since $z \in \z(\g)$), we get
\begin{align*}
    0 = \iota_z (d_{\g}\eta) = a \iota_{\xi} (d_{\g}\eta) + \iota_x (d_{\g}\eta) = \iota_x (d_{\g}\eta).
\end{align*}
    \noindent The nondegeneracy of $d_{\g} \eta$ on $\ker \eta$ implies $x = 0$, and thus $z = a \xi$. Therefore, either $\z(\g) = 0$ (when $a = 0$) or $\z(\g) = \R \xi$ (when $a \neq 0$). 
\end{proof} 
    \indent The following result is well-known, and it established by routine computations. 
\begin{proposition} \cite[Proposition 2]{AFV} \label{prop: contactization}
    If $\z(\g) = \R \xi$ then $\h$ is a Lie algebra with bracket $[\cdot, \cdot]_{\h}$ uniquely determined by declaring $p_{\h}:\g \to \h$ to be a Lie algebra morphism. In this case, $\g$ is a $1$-dimensional central extension of $\h$ by $\omega$, meaning 
\begin{align} \label{eq: contactization bracket}
    [x,y]_{\g} := \omega(x,y) \xi + [x,y]_{\h} \text{ for all $x$, $y \in \h$}, \quad [\xi, \h] = 0. 
\end{align}
\end{proposition}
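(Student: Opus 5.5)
The plan is a direct verification, since Proposition~\ref{prop: center in contact Lie algebras} already gives the structural fact we need: $\xi$ is central. First I check that the bracket on $\h$ is well defined. Declaring $p_{\h}$ to be a Lie algebra morphism forces, for $x,y \in \h$,
\begin{align*}
    [x,y]_{\h} := p_{\h}[x,y]_{\g}.
\end{align*}
So uniqueness is automatic, and the content lies in showing that this formula satisfies the Jacobi identity and that $p_{\h}$ respects brackets on all of $\g$. For the latter, write arbitrary elements as $a\xi + x$ and $b\xi + y$ with $x,y\in\h$. Centrality of $\xi$ gives $[a\xi+x,b\xi+y]_{\g} = [x,y]_{\g}$. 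Applying $p_{\h}$ yields exactly $[x,y]_{\h} = [p_{\h}(a\xi+x),p_{\h}(b\xi+y)]_{\h}$, as required.

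Next comes formula \eqref{eq: contactization bracket}. I decompose $[x,y]_{\g} = \eta([x,y]_{\g})\,\xi + p_{\h}[x,y]_{\g}$ and use the Chevalley--Eilenberg formula $d_{\g}\eta(x,y) = -\eta([x,y]_{\g})$. This gives
\begin{align*}
    \eta([x,y]_{\g}) = -d_{\g}\eta(x,y) = \omega(x,y),
\end{align*}
so $[x,y]_{\g} = \omega(x,y)\xi + [x,y]_{\h}$. The relation $[\xi,\h]=0$ is just centrality.

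The remaining step is the Jacobi identity for $[\cdot,\cdot]_{\h}$ together with the cocycle property of $\omega$; this is the only mildly delicate point. I would expand $\sum_{\mathrm{cyc}} [[x,y]_{\g},z]_{\g} = 0$ using the formula just obtained. The terms $\omega(x,y)[\xi,z]_{\g}$ vanish by centrality, so
\begin{align*}
    0 = \sum_{\mathrm{cyc}} [[x,y]_{\h},z]_{\g} = \sum_{\mathrm{cyc}} \Big( \omega([x,y]_{\h},z)\,\xi + [[x,y]_{\h},z]_{\h} \Big).
\end{align*}
Comparing the $\h$-components gives the Jacobi identity for $[\cdot,\cdot]_{\h}$. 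Comparing the $\xi$-components gives $\sum_{\mathrm{cyc}}\omega([x,y]_{\h},z)=0$, i.e. $d_{\h}\omega = 0$. Thus $\omega$ is a $2$-cocycle on $\h$, nondegenerate because $\eta$ is a contact form, and $\g$ is the central extension of $\h$ defined by $\omega$.

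I do not expect any genuine obstacle. The only care needed is the sign convention relating $d_{\g}\eta$ to the bracket, which must match the choice $\omega = -d_{\g}\eta$ made in the paper.
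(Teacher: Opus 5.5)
Your direct verification is correct and is exactly the routine computation the paper leaves to the reader, deferring to \cite[Proposition 2]{AFV}: define $[x,y]_{\h} := p_{\h}[x,y]_{\g}$, use centrality of $\xi$ to make $p_{\h}$ a morphism, read off the $\xi$-component as $\eta([x,y]_{\g}) = \omega(x,y)$, and split the Jacobi identity of $\g$ into the Jacobi identity for $[\cdot,\cdot]_{\h}$ and the cocycle condition for $\omega$. One small point: centrality of $\xi$ follows directly from the hypothesis $\z(\g) = \R\xi$, so you do not need Proposition~\ref{prop: center in contact Lie algebras} here.
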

    \indent A $1$-dimensional central extension by a symplectic cocycle is sometimes called \textit{contactization}. In fact, contact Lie algebras with nontrivial center are in bijective correspondence with symplectic Lie algebras, contactization being the correspondence (see \cite[Proposition 2]{AFV}): Starting with a symplectic Lie algebra $(\h,\omega)$, equation \eqref{eq: contactization bracket} defines a Lie bracket on the vector space $\g := \R \xi \oplus \h$, and the formula  $\eta(a \xi + x) = a$ for all $a \in \R$ and $x \in \h$ defines a contact form on $\g$ with Reeb vector $\xi$; certainly, this construction gives $\z(\g) = \R \xi$, so contactization occurs if and only if $\g$ has nontrivial center. There is no similar result characterizing all centerless contact Lie algebras. Certainly, two symplectic Lie algebras are isomorphic if and only if their contactizations are isomorphic (see \cite[Proposition 4]{AFV}). 

\begin{remark} \label{obs: when are dg and dh equal}
    If $p_{\h}:\g \to \h$ is a Lie algebra morphism then the transpose map $p_{\h}^*:\h^* \to \g^*$ induces a cochain map between $(\alt^* \h^*, d_{\h})$ and $(\alt^* \g^*, d_{\g})$, and a cochain isomorphism between $(\alt^* \h^*, d_{\h})$ and $(\alt_{\xi}^* \g^*, d_{\g})$. Following the remarks in the last paragraph, the maps $p_{\h}$ and $p_{\h}^*$ can be turned into a Lie algebra morphism and a cochain map respectively precisely when $(\g, \omega)$ arises as contactization from a symplectic Lie algebra $(\h,\omega)$. 
\end{remark}    
\begin{remark} \label{obs: iff unimod, nil, solv}
    If $(\g,\eta)$ is a contact Lie algebra obtained from a symplectic Lie algebra $(\h,\omega)$ via contactization then, with respect to the vector space splitting $\g = \R \xi \oplus \h$, 
\begin{align*}
    \ad_x^{\g} = 
    \begin{pmatrix}
        0 & * \\ 
        0 & \ad_x^{\h}
    \end{pmatrix} \quad \text{for all $x \in \h \subseteq \g$}.
\end{align*}
    \noindent In particular, $\g$ is unimodular, nilpotent, (completely) solvable if and only if $\h$ is unimodular, nilpotent, (completely) solvable: notice that Engel's theorem is used to establish the equivalence for nilpotency, and Cartan's criterion is used to establish the equivalence for solvability. In the unimodular case, it follows from \cite[Theorem 11]{Chu} that $\h$ is solvable since it is symplectic. 
\end{remark}  

\subsection{Sasakian and K-contact Lie algebras} \label{section: sasaki and K-contact}

\indent On any contact Lie algebra $(\g,\eta)$, there exist an inner product $g$ and a $(1,1)$-tensor field $\Phi$ subject to the following identities:
\begin{align*}
    \eta = \iota_{\xi} g, \quad d \eta = 2 g(\cdot, \Phi \, \cdot), \quad \Phi^2 = - \Id_{\g} + \eta \otimes \xi.
\end{align*}
    \noindent All these imply at once that $\Phi \xi = 0$ and $\eta \circ \Phi = 0$, as well as the compatibility condition
\begin{align*}
    g(\Phi x, \Phi y) = g(x,y) - \eta(x) \eta(y) \text{ for all $x$, $y \in \g)$}. 
\end{align*}
    \noindent One can adapt the arguments in \cite[Section 4]{Blair} for the Lie algebra setting to establish these results. A triple $(\eta, g, \Phi)$ satisfying all of the above is called a \textit{metric contact structure}. Special metric contact structures include \textit{K-contact} and \textit{Sasakian} structures, which we now define.

    \indent Denote by $\mathcal{L}_{\xi}$ the Lie derivative operator $\mathcal{L}_{\xi}$ on $\g$ with respect to $\xi$, by $\nabla$ the Levi-Civita connection associated with the inner product $g$ on $\g$, and by $N_{\Phi}$ the Nijenhuis tensor associated to $\Phi$. Recall that it satisfies, in the metric contact setting, the following identities: 
\begin{align*}
    \mathcal{L}_{\xi} \Phi = [\ad_{\xi}, \Phi], \quad (\mathcal{L}_{\xi} g)(x,y) = g(\ad_{\xi} x,y) + g(x, \ad_{\xi} y) \text{ for all $x$, $y \in \g$}, 
\end{align*}

    \indent A K-contact Lie algebra is a Lie algebra $\g$ with a metric contact structure $(\eta, g, \Phi)$ in which any, and therefore all, of the following equivalent statements hold: 
\begin{proposition} \label{prop: K-contact equivalences}
    The following conditions are equivalent:
\begin{multicols}{2}
\begin{enumerate} [\rm (i)]
    \item $\mathcal{L}_{\xi} g = 0$. 
    \item $\mathcal{L}_{\xi} \Phi = 0$. 
    \item $\ad_{\xi}$ is skew symmetric with respect to $g$.
    \item $\ad_{\xi}$ and $\Phi$ commute. 
    \item $\ad_{\xi} \circ \Phi$ is symmetric with respect to $g$.
    \item $\Phi x = - \nabla_x \xi$ for all $x \in \g$. 
\end{enumerate}    
\end{multicols}
\end{proposition}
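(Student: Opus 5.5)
The plan is to prove the cycle (i)$\Leftrightarrow$(iii)$\Leftrightarrow$(v)$\Leftrightarrow$(iv)$\Leftrightarrow$(ii), and then to attach (vi) separately through a Koszul-formula computation. Throughout we use the identities recalled just before the statement: $\mathcal{L}_{\xi} g(x,y) = g(\ad_{\xi}x,y)+g(x,\ad_{\xi}y)$ and $\mathcal{L}_{\xi}\Phi = [\ad_{\xi},\Phi]$. We also use the metric contact identities $\eta=\iota_\xi g$, $d\eta = 2g(\cdot,\Phi\,\cdot)$, $\Phi^2=-\Id+\eta\otimes\xi$, together with their consequences $\Phi\xi=0$ and $\eta\circ\Phi=0$. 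Recall as well that $\Phi$ is $g$-skew, since $d\eta$ is alternating, and that $\ad_\xi^*\eta=0$, i.e.\ $\eta\circ\ad_\xi=0$, by \eqref{eq: Seta = 0}.

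\textbf{(i)$\Leftrightarrow$(iii)} and \textbf{(ii)$\Leftrightarrow$(iv)} are immediate from the two displayed identities.

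\textbf{(iii)$\Leftrightarrow$(v)$\Leftrightarrow$(iv).} Write $\ad_\xi=S+A$, where $S$ is the $g$-symmetric part and $A$ the skew part. The key input is the invariance of $d\eta$ under $\xi$: indeed $\mathcal{L}_\xi d\eta = d\mathcal{L}_\xi\eta=0$. This gives
\[
g(\ad_\xi x,\Phi y)+g(x,\Phi\ad_\xi y)=0 ,
\]
that is, $\ad_\xi^{T}\Phi+\Phi\ad_\xi=0$, where ${}^{T}$ denotes the $g$-adjoint. The $g$-adjoint of $\ad_\xi\Phi$ is $\Phi^T\ad_\xi^T=-\Phi\ad_\xi^T$. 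Hence $\ad_\xi\Phi$ is symmetric iff $\ad_\xi\Phi=-\Phi\ad_\xi^T$. Combining this with the identity above:
\begin{itemize}
\item if $\ad_\xi^T=-\ad_\xi$ (condition (iii)), the identity reads $\ad_\xi\Phi=\Phi\ad_\xi$, which gives (iv), and then (v) follows since $-\Phi\ad_\xi^T=\Phi\ad_\xi=\ad_\xi\Phi$;
\item conversely, if (v) holds, the identity yields $\ad_\xi\Phi=-\Phi\ad_\xi^T=\Phi\ad_\xi$, which is (iv);
\item if (iv) holds, the identity gives $(\ad_\xi^T+\ad_\xi)\Phi=0$, so $S\Phi=0$ and $S$ vanishes on $\im\Phi=\h$.
\end{itemize}
In the last case it remains to control $S\xi$. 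We have $g(S\xi,\xi)=g(\ad_\xi\xi,\xi)=0$. For $x\in\h$, symmetry of $S$ gives $g(S\xi,x)=g(\xi,Sx)=0$. Hence $S=0$, which is (iii). The main thing to check carefully here is this handling of the $\xi$-direction, together with the claim $\im\Phi=\h$; the latter follows from $\Phi^2=-\Id$ on $\h$.

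\textbf{(vi).} Using the Koszul formula for left-invariant metrics,
\[
2g(\nabla_x\xi,y)=g([x,\xi],y)-g([\xi,y],x)+g([y,x],\xi),
\]
together with $g([y,x],\xi)=\eta([y,x])=-d\eta(y,x)=d\eta(x,y)=2g(x,\Phi y)=-2g(\Phi x,y)$, we obtain
\[
2\nabla_x\xi=-\ad_\xi x-\ad_\xi^T x-2\Phi x=-2Sx-2\Phi x .
\]
Thus (vi) holds iff $S=0$, i.e.\ iff (iii) holds.

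The main obstacle I expect is keeping sign conventions consistent, in particular $d\eta(x,y)=-\eta([x,y])$ and the factor $2$. I would verify these against a test case such as $\su(2)$ before finalizing.
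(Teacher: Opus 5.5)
\textbf{The step (v)$\Rightarrow$(iv) does not work, and it cannot be repaired.} Your arguments for (i)$\Leftrightarrow$(iii), (ii)$\Leftrightarrow$(iv) and (iii)$\Leftrightarrow$(iv) are correct. So is your treatment of (vi) via $\nabla_x\xi=-Sx-\Phi x$. Together they give a self-contained Lie-algebraic proof, whereas the paper only refers to Blair.

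The identity you get from $\mathcal{L}_\xi d\eta=0$ is $\ad_\xi^T\Phi=-\Phi\ad_\xi$, with $\ad_\xi^T$ to the \emph{left} of $\Phi$. In the step (v)$\Rightarrow$(iv) you use it in the form $-\Phi\ad_\xi^T=\Phi\ad_\xi$, with $\ad_\xi^T$ on the right. These two coincide only if $\ad_\xi^T$ commutes with $\Phi$, which is essentially the conclusion you want.

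The gap cannot be closed because (v) carries no information. Both $\ad_\xi$ and $\ad_\xi^T$ map into $\h=\xi^{\perp_g}$, since $\ad_\xi\xi=0$ and $\eta\circ\ad_\xi=0$. On $\h$ one has $\Phi^2=-\Id$, so your identity gives $\ad_\xi^T=\Phi\ad_\xi\Phi$ on $\h$. Hence $(\ad_\xi\Phi)^T=-\Phi\ad_\xi^T=-\Phi^2\ad_\xi\Phi=\ad_\xi\Phi$ there. Since $\ad_\xi\Phi$ also kills $\xi$ and takes values in $\h$, it is $g$-symmetric for \emph{every} metric contact structure on a Lie algebra.

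For a concrete instance, take $\sl(2,\R)$ with $[\xi,e_1]=e_1$, $[\xi,e_2]=-e_2$, $[e_1,e_2]=\xi$ and $\eta=\xi^*$. Let $g$ make $\{\xi,\sqrt{2}\,e_1,\sqrt{2}\,e_2\}$ orthonormal, and set $\Phi\xi=0$, $\Phi e_1=e_2$, $\Phi e_2=-e_1$. With the paper's convention $d\eta(x,y)=-\eta([x,y])$, this satisfies $\eta=\iota_\xi g$, $d\eta=2g(\cdot,\Phi\,\cdot)$ and $\Phi^2=-\Id+\eta\otimes\xi$. Here $\ad_\xi\Phi$ is symmetric. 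However, $\ad_\xi=\mathrm{diag}(0,1,-1)$ in that orthonormal basis, so it is $g$-symmetric and nonzero. Thus (iii) fails, and (iv) fails too, since $\ad_\xi\Phi e_1=-e_2\neq e_2=\Phi\ad_\xi e_1$.

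So item (v) of the statement as printed is not equivalent to the others, and the paper's appeal to Blair does not justify it either. It has to be dropped, or replaced by a condition that actually detects the symmetric part $S$ of $\ad_\xi$. One option is skew-symmetry of $\nabla\xi$, which by your own formula is equivalent to $S=0$. With that change, your proof of the remaining equivalences stands.
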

\begin{proof}
    It follows by adapting \cite[Theorem 6.2]{Blair} and \cite[Lemma 6.2]{Blair} to the Lie algebra setting.  
\end{proof}
    \indent As a direct consequence, in any K-contact Lie algebra there is a $g$-orthogonal direct sum decomposition of $\g$ into $\Phi$-invariant subspaces:
\begin{align} \label{eq: DS-contact equation}
    \g = \ker \ad_{\xi} \oplus \im \ad_{\xi}. 
\end{align} 
    \indent A Sasakian Lie algebra is a Lie algebra $\g$ with a metric contact structure $(\eta, g, \Phi)$ in which any, and therefore all, of the following equivalent statements hold: 
\begin{proposition} \label{prop: Sasaki equivalences}
    The following conditions are equivalent:
\begin{multicols}{2}
\begin{enumerate} [\rm (i)]
    \item $N_{\Phi}(x,y) = - d \eta(x,y) \xi$ for $x$, $y \in \g$.  
    \item $(\nabla_x \Phi)y = g(x,y) \xi - \eta(y) x$ for $x$, $y \in \g$.   
\end{enumerate}    
\end{multicols}
\end{proposition}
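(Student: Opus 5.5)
The plan is to adapt Blair's proof of the analogous statement for contact metric manifolds, with every vector field taken to be an element of $\g$. I will prove the two implications separately, using two tensorial identities valid for any metric contact structure $(\eta,g,\Phi)$:
\begin{itemize}
\item the torsion-free expression of the Nijenhuis tensor in terms of $\nabla\Phi$;
\item Blair's formula for $g((\nabla_x\Phi)y,z)$.
\end{itemize}
The convention is $N_\Phi(x,y)=\Phi^2[x,y]+[\Phi x,\Phi y]-\Phi[\Phi x,y]-\Phi[x,\Phi y]$. Since $\nabla$ is the Levi-Civita connection of a left-invariant metric, it is given by the Koszul formula on $\g$ and is torsion-free. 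All the usual pointwise manipulations therefore go through verbatim.

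For (ii)$\Rightarrow$(i), first set $y=\xi$ in (ii). Because $\Phi\xi=0$, this gives $-\Phi\nabla_x\xi=\eta(x)\xi-x$. Applying $\Phi$ and using $\Phi^2=-\Id+\eta\otimes\xi$ yields $\nabla_x\xi=-\Phi x$ (note $\eta(\nabla_x\xi)=0$ because $g(\xi,\xi)=1$). Next, torsion-freeness gives
\begin{align*}
N_\Phi(x,y)=(\nabla_{\Phi x}\Phi)y-(\nabla_{\Phi y}\Phi)x-\Phi(\nabla_x\Phi)y+\Phi(\nabla_y\Phi)x .
\end{align*}
Substituting (ii) into each term and using $\eta\circ\Phi=0$ and $\Phi\xi=0$, everything collapses to $N_\Phi(x,y)=2g(\Phi x,y)\xi$. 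Since $d\eta=2g(\cdot,\Phi\,\cdot)$ and $\Phi$ is $g$-skew (from the compatibility condition), we have $-d\eta(x,y)\xi=2g(\Phi x,y)\xi$. This is (i).

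For (i)$\Rightarrow$(ii), I would adapt Blair's Lemma 6.1 to $\g$. Write $F(x,y):=g(x,\Phi y)=\tfrac12 d\eta(x,y)$ and $N^{(1)}:=N_\Phi+d\eta\otimes\xi$. Blair's formula reads
\begin{align*}
2g((\nabla_x\Phi)y,z)=3\,dF(x,\Phi y,\Phi z)-3\,dF(x,y,z)+g(N^{(1)}(y,z),\Phi x)+d\eta(\Phi y,x)\eta(z)-d\eta(\Phi z,x)\eta(y).
\end{align*}
It is derived purely algebraically from the Koszul formula for $\nabla$ and the identities among $\eta,g,\Phi$, so it holds on $\g$ once left-invariant fields are used throughout. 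Since $F$ is a multiple of $d\eta$, we get $dF=0$. Hypothesis (i) says exactly that $N^{(1)}=0$. Then $d\eta(\Phi y,x)=2g(\Phi y,\Phi x)=2(g(x,y)-\eta(x)\eta(y))$, and similarly for the other term. The right-hand side becomes $2g\bigl(g(x,y)\xi-\eta(y)x,\,z\bigr)$, and nondegeneracy of $g$ gives (ii).

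The main obstacle is establishing Blair's identity in the Lie algebra setting, keeping the constant conventions consistent: the factor in $d\eta=2g(\cdot,\Phi\cdot)$ and the normalization of exterior derivatives. I would verify it by expanding $2g(\nabla_x(\Phi y),z)$ and $2g(\nabla_x y,\Phi z)$ via Koszul, where all terms involving derivatives of $g$ vanish since $g$ is constant on left-invariant fields. I would then match the remaining bracket terms against the expansions of $dF$ and $N^{(1)}$. Alternatively, one can simply cite \cite[Theorem 6.3]{Blair}, as was done for Proposition~\ref{prop: K-contact equivalences}, noting that the proof is local and tensorial and therefore transfers directly.
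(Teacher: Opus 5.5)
Your proposal is correct and is essentially the paper's approach: the paper just cites Blair's Theorem 6.3 adapted to the Lie algebra setting. You carry out that adaptation explicitly, using Blair's Lemma 6.1 with $dF=0$ and $N^{(1)}=0$ for (i)$\Rightarrow$(ii), and the torsion-free expansion of $N_\Phi$ for (ii)$\Rightarrow$(i). Two small remarks: your version of Lemma 6.1 silently drops Blair's $N^{(2)}(y,z)\eta(x)$ term, which is legitimate only because $N^{(2)}$ vanishes for contact metric structures, so you should say so; and deriving $\nabla_x\xi=-\Phi x$ is unnecessary here, since a metric contact structure is assumed from the start.
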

\begin{proof} 
    It follows by adapting \cite[Theorem 6.3]{Blair} to the Lie algebra setting. 
\end{proof}
    \indent Recall that all Sasakian Lie algebras are K-contact (as pointed out in \cite[Corollary 6.3]{Blair}), but the converse does not hold in general: for instance, it is known that the only Sasakian nilpotent Lie algebras are the Heisenberg Lie algebras (see \cite[Theorem 3.9]{AFV}), while all nilpotent Lie algebras are K-contact (since they have nontrivial center and thus $\ad_{\xi} = 0$ is skew-symmetric with respect to any compatible metric). 

\subsection{Cohomology remarks} \label{section: cohomology remarks}
        
    \indent It is well-known that a $m$-dimensional Lie algebra $\k$ is unimodular if and only if $H^m_{CE}(\k) \neq 0$, meaning that the differential map $d_{\k}: \alt^{m-1} \k^* \to \alt^m \k^*$ is zero. Here, $H^*_{CE}(\g)$ stands for Chevalley-Eilenberg cohomology with trivial coefficients. This follows from the fact that the map $x \mapsto \iota_x \mu$ is an isomorphism between $\k$ and $\alt^{m-1} \k^*$, as well as the identity
\begin{align*}
    d_{\k}(\iota_x \mu) = \mathcal{L}_x \mu = - \tr(\ad_x) \mu \quad \text{for all $x \in \k$ and $\mu \in \alt^m \k^*$}. 
\end{align*}
    \noindent We refer to \cite[Section 3.1]{AG3} for the fleshed-out details. The next result appears to be new. 
\begin{proposition} \label{prop: its converse need not hold}
    If a contact Lie algebra $(\g, \eta)$ is unimodular then $\xi \in [\g,\g]$. 
\end{proposition}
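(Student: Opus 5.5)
The plan is to argue by contradiction, using the cohomological characterization of unimodularity recalled just above: $\g$ is unimodular if and only if $d_{\g}\colon \alt^{2n}\g^* \to \alt^{2n+1}\g^*$ is the zero map. So I will assume $\xi \notin [\g,\g]$ and build a nonzero exact top-degree form.

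First, since $\xi \notin [\g,\g]$, I can choose $\alpha \in \g^*$ with $\alpha\vert_{[\g,\g]} = 0$ and $\alpha(\xi) = 1$. Because $\alpha$ vanishes on $[\g,\g]$, it is closed: $d_{\g}\alpha(x,y) = -\alpha([x,y]) = 0$. Next, split $\alpha$ along $\g = \R\xi \oplus \h$ as $\alpha = \eta + \beta$ with $\beta := \alpha - \eta$. Then $\iota_\xi\beta = 0$, so $\beta \in \alt^1_{\xi}\g^*$.

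The key computation is
\begin{align*}
    \alpha \wedge (d_{\g}\eta)^n = \eta\wedge(d_{\g}\eta)^n + \beta \wedge (d_{\g}\eta)^n .
\end{align*}
Both $\beta$ and $d_{\g}\eta$ are annihilated by $\iota_\xi$, so $\beta\wedge(d_{\g}\eta)^n$ is a $(2n+1)$-form in $\alt^{2n+1}_{\xi}\g^*$. By Lemma~\ref{lemma: an identification} this space is identified with $\alt^{2n+1}\h^*$, which is zero since $\dim\h = 2n$. Hence $\alpha\wedge(d_{\g}\eta)^n = \eta\wedge(d_{\g}\eta)^n \neq 0$ by the contact condition.

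On the other hand, since $d_{\g}\alpha = 0$ and $d_{\g}$ is a graded derivation,
\begin{align*}
    d_{\g}\bigl(\alpha\wedge\eta\wedge(d_{\g}\eta)^{n-1}\bigr) = -\,\alpha\wedge(d_{\g}\eta)^n .
\end{align*}
So this nonzero top-degree form is exact, i.e.\ $d_{\g}\colon\alt^{2n}\g^*\to\alt^{2n+1}\g^*$ is nonzero. This contradicts unimodularity, and the proposition follows.

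There is no real obstacle here. The only points needing care are the vanishing of $\beta\wedge(d_{\g}\eta)^n$, which is a degree count on $\h$, and the sign bookkeeping in the Leibniz rule; the sign is irrelevant to the conclusion. When $n=0$ the statement is vacuous, since the one-dimensional algebra is unimodular but carries no contact form with $n\geq 1$. Even so, the argument above still works formally with $(d_{\g}\eta)^{0}=1$, as long as one checks that $d_{\g}$ vanishing on degree $0$ gives the same contradiction. In that degenerate case I would simply note that $\g=\R\xi$ is abelian and treat it separately if needed.
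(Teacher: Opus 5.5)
Your argument is correct and is essentially the paper's own proof: a closed $\alpha$ with $\alpha(\xi)=1$, the degree count on $\h$ giving $\alpha\wedge(d_{\g}\eta)^n=\eta\wedge(d_{\g}\eta)^n$, and the primitive $\alpha\wedge\eta\wedge(d_{\g}\eta)^{n-1}$ showing that the volume form is exact. One correction to your closing aside: for $n=0$ the statement is neither vacuous nor covered by your argument (there is no $(d_{\g}\eta)^{n-1}$, and $d_{\g}$ vanishes on $0$-forms), and in fact it is false there, since $\g=\R\xi$ with $\eta=\xi^*$ is contact and unimodular with $[\g,\g]=0$, so you should simply assume $n\geq 1$, as the paper implicitly does.
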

\begin{proof}
    Assume $\xi \notin [\g,\g]$. Then there exists $\alpha \in \g^*$ such that $\alpha(\xi) = 1$ and $\alpha([\g,\g]) = 0$. In particular, $d_{\g} \alpha = 0$. Notice that $\alpha \wedge \omega^n$ is exact, for $d_{\g} \omega = 0$ and then 
\begin{align*}
    d_{\g} (\alpha \wedge \eta \wedge \omega^{n-1}) = - \alpha \wedge d_{\g} \eta \wedge \omega^{n-1} = \alpha \wedge \omega^n.
\end{align*}
    \noindent There is $\alpha_0 \in \g^*$ such that $\iota_{\xi} \alpha_0 = 0$ and $\alpha = \eta + \alpha_0$. Since $\iota_{\xi} \omega^n = 0$ as well, we see that $\alpha_0 \wedge \omega^n$ is identified with a form of degree $(2n+1)$ in $\h = \ker \eta$, and thus it is the zero form. This implies $\alpha \wedge \omega^n = \eta \wedge \omega^n$, whence the volume form $\eta \wedge \omega^n$ is exact and $H^{2n+1}(\g) = 0$. 
\end{proof} 
    \indent From Proposition \ref{prop: its converse need not hold} we get a particular case of Theorem \ref{thm: main result} for unimodular solvable Lie algebras: since $\xi \in [\g,\g]$ holds for this class of Lie algebras, where $[\g,\g]$ is a nilpotent ideal, Engel's theorem implies that $\ad_{\xi}$ is nilpotent.

    \indent The converse of Proposition \ref{prop: its converse need not hold} need not hold. The following set of examples exhibit diverse ways in which it fails. 

    \indent The first two examples arise as contactization of symplectic Lie algebras, and thus have nontrivial center and $\ad_{\xi} = 0$. One is solvable while the other is not. According to \cite[Corollary 5]{D}, there are no 5-dimensional nonsolvable examples of this kind. It is remarked in \cite[Remark 3.11]{AG3} that, for this class of contact Lie algebras, $\xi \in [\g,\g]$ holds precisely when $\omega := - d_{\g} \eta$ is nonexact on $\h$. 
   
\begin{example}
    Let $\h$ be the Lie algebra with basis $\{ e_1, e_2, e_3, e_4\}$ and brackets given by
\begin{align*}
    [e_1, e_4] = - e_1, \quad [e_3, e_4] = - e_2.
\end{align*}    
    \noindent It is not unimodular since $\tr(\ad_{e_4}) = 1$, and it is solvable; moreover, and $\omega := e^{14} + e^{23}$ is a (nonexact) symplectic form on $\h$. As known from Remark \ref{obs: iff unimod, nil, solv}, the contactization $(\g,\eta)$ of $(\h,\omega)$ is not unimodular due to Remark \ref{obs: iff unimod, nil, solv}. However, $\xi = \omega(e_2,e_3) \xi + [e_2, e_3]_{\h} = [e_2, e_3]_{\g} \in [\g,\g]$.  
\end{example} 
\begin{example}
    Let $\h$ be the Lie algebra with basis $\{e_1, e_2, e_3, e_4, e_5, e_6\}$ and bracket given by
\begin{align*}
    [e_2, e_5] = e_4, \quad [e_3, e_4] = e_5, \quad [e_4, e_6] = e_4, \quad [e_5, e_6] = e_5.
\end{align*}     
    \noindent It is not unimodular since $\tr(\ad_{e_6}) = -2$, and it is not solvable; moreover, $\omega := e^{12} + e^{15} - e^{34} - e^{56}$ is a (nonexact) symplectic form on $\h$. As known from Remark \ref{obs: iff unimod, nil, solv}, the contactization $(\g,\eta)$ of $(\h,\omega)$ is not unimodular due to Remark \ref{obs: iff unimod, nil, solv}. However, $\xi = \omega(e_1,e_2) \xi + [e_1,e_2]_{\h} = [e_1,e_2]_{\g} \in [\g,\g]_{\g}$. 
\end{example}
    \indent The next two examples have nilpotent $\ad_{\xi}$. Both are centerless, so $\ad_{\xi} \neq 0$; also, they are decomposable contact Lie algebras, arising as a direct sum of a contact Lie algebra and an exact symplectic Lie algebra. One is solvable while the other is not. They also show that the converse of Theorem \ref{thm: main result} is false. As deduced from \cite[Proof of Corollary 5]{D}, there are no 5-dimensional nonsolvable examples of this kind.  
\begin{example} \label{ex: converse of main is false 1}
    Let $\g$ be the Lie algebra with basis $\{\xi, e_1, e_2, e_3, e_4\}$ and bracket given~by
\begin{align*}
    [\xi, e_1] = e_2 - \xi, \quad [e_1, e_2] = e_2, \quad  [e_3, e_4] = e_4. 
\end{align*}
    \noindent It is not unimodular since $\tr(\ad_{e_1}) = 2$, and it is solvable. Notice that $\eta := \xi^* + e^2 + e^4$ is a contact form on $\g$ with Reeb vector $\xi$, and certainly $\xi = [e_1,e_2] - [\xi,e_1] \in [\g,\g]$. Moreover, $\ad_{\xi}$ is nilpotent since $ \ad_{\xi}(e_1) = e_2 - \xi$ and $\ad_{\xi}(e_j) = 0$ for all $2 \leq j \leq 4$. Notice that $\g$ is just $(\R \ltimes_A \R^2) \oplus \mathrm{aff}(\R)$ with $A = \begin{pmatrix}
        1 & 1 \\
        0 & 1
    \end{pmatrix}$. Here, $\aff(\R)$ is the only 2-dimensional nonabelian Lie algebra, whose Lie brackets are given $[u,v] = v$ with respect to some basis $\{u,v\}$. 
\end{example}
\begin{example} \label{ex: converse of main is false 2}
    Let $\g$  be the Lie algebra with basis $\{\xi, e_1, e_2, e_3, e_4, e_5, e_6\}$ and bracket given~by 
\begin{gather*}
    [\xi,e_4] = -e_1, \quad [\xi,e_5] = e_4, \quad [\xi,e_6] = -e_5,\\
    [e_1,e_5] = e_1, \quad [e_1,e_6] = -2\xi+e_4, \quad [e_2,e_3] = \xi+e_3,\\
    [e_3,e_4] = e_1, \quad [e_3,e_5] = -e_4, \quad [e_3,e_6] = e_5,\\
    [e_4,e_5] = 2\xi+e_4, \quad [e_4,e_6] = -2e_5, \quad [e_5,e_6] = 2 e_6.
\end{gather*}
    \noindent It is not unimodular since $\tr(\ad_{e_2}) = 1$, and it is not solvable. Notice that $\eta = \xi^*$ is a contact form on $\g$ with Reeb vector $\xi$, and certainly $\xi = - \frac{1}{2}([e_1, e_6] - [\xi, e_5]) \in [\g,\g]$. Moreover, $\ad_{\xi}$ is nilpotent since it is strictly upper triangular in the given basis. Notice that $\g$ is just $(\sl(2,\R) \ltimes \R^2) \oplus \mathrm{aff}(\R)$ in the basis $\{X, Y, H, p,q, u,v\}$ given by
\begin{gather*}
    X := \frac{1}{3} e_6, \quad Y := \xi + e_4, \quad H := e_5, \\
    p := \xi - \frac{1}{2} e_4, \quad q := \frac{3}{2} e_1, \quad 
     \quad u := e_2, \quad v := \xi + e_3,
\end{gather*}
    \noindent since  then the nonzero brackets become
\begin{gather*}
    [X,Y] = H, \quad [H,X] = 2X, \quad [H,Y] = -2Y, \quad [u,v] = v, \\
    [X, q] = p, \quad [Y,p] = q, \quad [H, p] = p, \quad [H,q] = -q.
\end{gather*} 
\end{example}   
    \indent The next two examples admit compatible Sasakian structures, as can be seen from \cite[Proposition 4.2]{ACN}. Both are centerless; one is solvable, and the other is not. In fact, the second one is precisely \cite[Examples 4.4 (i)]{ACN} with $k = 1$, and the first one is a modification of \cite[Examples 4.4 (ii)]{ACN} so that $\xi \in [\g,\g]$. 
\begin{example} \label{example: sasakian 1}
    Let $\g$ be the Lie algebra with basis $\{\xi,e_1,e_2,e_3,e_4,e_5,e_6\}$ and bracket given~by
\begin{gather*}
    [e_1,e_2]=e_2+2\xi, \quad [e_3,e_4]=e_4+2\xi, \quad [e_5,e_6]=e_2-e_4+2\xi,\\ 
    [\xi,e_6] = - e_5, \quad [e_2,e_6] = \phantom{+} 2 e_5, \quad [e_4,e_6] = \phantom{+} 2 e_5, \\ 
    [\xi,e_5] = \phantom{+}  e_6,  \quad [e_2,e_5] = -2 e_6, \quad [e_4,e_5] = -2 e_6,\\ 
    [e_1,e_5] = \tfrac{1}{2} e_5, \quad [e_3,e_5] = -\tfrac{1}{2} e_5,\quad [e_1,e_6] = \tfrac{1}{2} e_6, \quad [e_3,e_6] = -\tfrac{1}{2} e_6.
\end{gather*} 
\end{example}
    \noindent It is not unimodular since $\tr(\ad_{e_1}) = 1$, and it is solvable. Following \cite[Proposition 4.2]{ACN}, we can establish that it admits a Sasakian structure for which $\xi$ is the Reeb vector, for which $\xi = \frac{1}{2} ( [e_1,e_2] + [e_3, e_4] - [e_5,e_6] ) \in [\g,\g]$. 
\begin{example} \label{example: sasakian 2}
    Let $\g$ be the Lie algebra with basis $\{\xi, e_1, e_2, e_3, e_4, e_5, e_6\}$ and bracket given~by
\begin{gather*}
    [e_1, e_2] = e_2 + 2 \xi, \quad [e_3, e_4] = e_4 + 2 \xi, \quad [e_5, e_6] = 2 \xi, \\
    [\xi, e_6] = - e_5, \quad [e_2, e_6] = -2 e_5, \quad [e_4, e_6] = -2 e_5, \\
    [\xi, e_5] = \phantom{+} e_6, \quad [e_2, e_5] = -2 e_6, \quad [e_4, e_5] = -2 e_6.
\end{gather*}
    \noindent It is not unimodular since $\tr(\ad_{e_1}) = 1$, and it is not solvable. According to \cite[Examples 4.4 (i)]{ACN}, it admits a Sasakian structure for which $\xi$ is the Reeb vector, which certainly lies in $[\g,\g]$. 
\end{example} 
    \indent Denote by $H_B^*(\g)$ the basic cohomology of $(\g, \eta)$, consisting of forms that are closed (in the usual Chevalley-Eilenberg sense) and $\xi$-basic (meaning both $\iota_{\xi} \alpha = 0$ and $ \mathcal{L}_{\xi} \alpha = 0$) modulo exact $\xi$-basic forms. Notice that $\omega = -d_{\g}\eta$ is $\xi$-basic since $\iota_{\xi}\omega = -\iota_{\xi} d_{\g} \eta = 0$ and $\mathcal{L}_{\xi} \omega = d_{\g} \iota_{\xi} \omega = 0$.     
\begin{lemma} \label{lemma: xi en el conmutador}
    The following conditions are equivalent: 
\begin{enumerate} [\rm (i)]
    \item $\xi \in [\g,\g]$.
    \item Every class in $H^1_{CE}(\g)$ has a $\xi$-basic representative.
    \item The class $[\omega]_B$ of $\omega := - d_{\g} \eta$ in $H^2_B(\g)$ is nonzero.  
\end{enumerate}
\end{lemma}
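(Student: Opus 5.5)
We prove the cycle of implications (i)$\Rightarrow$(ii)$\Rightarrow$(iii)$\Rightarrow$(i), working throughout with the splitting $\g = \R\xi \oplus \h$. Two standard facts do most of the work. First, a $1$-form $\alpha$ is closed if and only if $\alpha([\g,\g]) = 0$. Second, a closed $1$-form is automatically invariant under $\xi$, since $\mathcal{L}_\xi \alpha = \iota_\xi d_{\g}\alpha + d_{\g}\iota_\xi\alpha$ and $d_{\g}$ vanishes on constants. Moreover, in degree one there are no nonzero exact forms: $d_{\g}$ kills $\alt^0\g^*=\R$, so $H^1_{CE}(\g)$ is the space of closed $1$-forms. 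Consequently, a class has a $\xi$-basic representative exactly when the closed form itself satisfies $\alpha(\xi)=0$.

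For (i)$\Rightarrow$(ii), take a closed $\alpha$. It vanishes on $[\g,\g]$, which contains $\xi$ by hypothesis, so $\iota_\xi\alpha = 0$. Together with the invariance just noted, this makes $\alpha$ itself $\xi$-basic.

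For (ii)$\Rightarrow$(iii), argue by contraposition. Suppose $[\omega]_B = 0$, so that $\omega = d_{\g}\beta$ for some $\xi$-basic $1$-form $\beta$; here $\iota_\xi\beta = 0$ and $\mathcal{L}_\xi\beta=0$. Set $\alpha := \eta + \beta$. Then $d_{\g}\alpha = -\omega + \omega = 0$, so $\alpha$ is closed, while $\alpha(\xi) = 1$. By the remark above, the class of $\alpha$ has no $\xi$-basic representative, because its only representative is $\alpha$ itself and $\alpha(\xi)\neq 0$. This contradicts (ii).

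For (iii)$\Rightarrow$(i), again use contraposition. Assume $\xi\notin[\g,\g]$. Exactly as in the proof of Proposition~\ref{prop: its converse need not hold}, choose $\alpha\in\g^*$ with $\alpha(\xi)=1$ and $\alpha([\g,\g])=0$, so that $d_{\g}\alpha = 0$. Write $\alpha = \eta + \alpha_0$ with $\iota_\xi\alpha_0 = 0$. Then $d_{\g}\alpha_0 = -d_{\g}\eta = \omega$. The step that needs checking is that $\alpha_0$ is $\xi$-basic, that is, that $\mathcal{L}_\xi\alpha_0 = 0$. This follows because $\mathcal{L}_\xi\alpha_0 = \mathcal{L}_\xi\alpha - \mathcal{L}_\xi\eta$. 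The first term vanishes since $\alpha$ is closed, and the second vanishes by \eqref{eq: Seta = 0}. Hence $\omega = d_{\g}\alpha_0$ is exact within the basic complex, so $[\omega]_B = 0$.

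I expect no serious obstacle. The only care needed is the bookkeeping of which forms are genuinely $\xi$-basic, and this is handled by \eqref{eq: Seta = 0} together with Cartan's formula.
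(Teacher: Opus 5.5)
Your proof is correct and follows the paper's argument essentially step for step. Both use the cycle (i)$\Rightarrow$(ii)$\Rightarrow$(iii)$\Rightarrow$(i), the identification $H^1_{CE}(\g)=Z^1_{CE}(\g)$, the closed form $\eta+\beta$ with value $1$ on $\xi$, and the basic primitive $\alpha-\eta$ of $\omega$ (the paper writes $\eta-\alpha$ and $\omega=-d_{\g}\theta$, a harmless sign change), and your explicit check that these forms satisfy $\mathcal{L}_\xi=0$ via Cartan's formula and $\mathcal{L}_\xi\eta=0$ just spells out what the paper leaves implicit.
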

\begin{proof} \phantom{.} \\
    (i) $\Rightarrow$ (ii): It follows from the identification $H^1_{CE}(\g) = Z^1_{CE}(\g)$, since closed forms vanish on $[\g,\g]$ by definition. \\
    (ii) $\Rightarrow$ (iii): If $[\omega]_B = 0$ then there is some basic $1$-form $\theta$ such that $\omega = d_{\g} \theta$; then, the 1-form $\alpha := \eta + \theta$ is closed and non-exact, and satisfies $\alpha(\xi) = 1$, contradicting (ii). \\
    (iii) $\Rightarrow$ (i): If $\xi \notin [\g,\g]$ then there is a closed $1$-form $\alpha \in \g^*$ such that $\alpha(\xi) = 1$, and thus $\theta := \eta - \alpha$ is $\xi$-basic and $\omega = - d_{\g} \theta$.  
\end{proof}
    \indent Let $\mathrm{pr}_{\h}:\g \to \h$ denote the canonical linear projection associated with the vector space splitting $\g = \R \xi \oplus \h$. For each $x \in \h = \ker \eta$, define $M_x := \mathrm{pr}_{\h} \circ \ad_x \vert_{\h}$. Setting $\nu := \omega^n$, where $\omega := - d_{\g} \eta$ and $\dim \g = 2n+1$, a straightforward computation shows that
\begin{align*}
    \mathcal{L}_{\xi}( \iota_x \nu) = \iota_{[\xi,x]} , \quad d_{\g} (\iota_x \nu) = \mathcal{L}_x \nu = - \tr( M_x ) \nu
\end{align*}
    \noindent for all $x \in \h$. Thus, a form $\iota_x \nu$ is basic if and only if $x \in \ker (\ad_{\xi} \vert_{\h})$, and $d:\Omega_B^{2n-1}(\g) \to \Omega_B^{2n}$ is zero if and only if $\tr(M_x) = 0$ for all $x \in \ker(\ad_{\xi} \vert_{\h})$. Contact Lie algebras $(\g,\eta)$ are said to be \textit{transversely unimodular} when this last property holds; that is, when each $M_x$ is traceless for any $x \in \ker(\ad_{\xi} \vert_{\h})$. Hence, we might say that the transverse Poincaré duality amounts to the equivalence between $H^{2n}_B(\g) \neq 0$ and  transverse unimodularity. 
\begin{corollary}
    If a contact Lie algebra $(\g,\eta)$ is transversely unimodular then $\xi \in [\g,\g]$.
\end{corollary}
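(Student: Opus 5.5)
The plan is to argue by contraposition through Lemma~\ref{lemma: xi en el conmutador}. Assume $\xi \notin [\g,\g]$; then the implication (iii)~$\Rightarrow$~(i) of that lemma, read contrapositively, gives $[\omega]_B = 0$ in $H^2_B(\g)$. Concretely, pick a closed $1$-form $\alpha \in \g^*$ with $\alpha(\xi)=1$ and set $\theta := \eta - \alpha$. Then $\theta$ satisfies $\iota_\xi\theta = 0$ and $\omega = -d_{\g}\theta$. Moreover $\mathcal{L}_\xi\theta = d_{\g}\iota_\xi\theta + \iota_\xi d_{\g}\theta = -\iota_\xi \omega = 0$, so $\theta$ is $\xi$-basic.

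Next I would take the top basic degree. Since the Lie derivative and $d_{\g}$ are derivations commuting with $\iota_\xi$-compatible operations, wedge products of basic forms are basic and $d_{\g}$ preserves basic forms. The form $\beta := \theta \wedge \omega^{n-1}$ is therefore basic of degree $2n-1$. Using $d_{\g}\omega = 0$, we get
\begin{align*}
    d_{\g}\beta = d_{\g}\theta \wedge \omega^{n-1} = -\omega^n = -\nu .
\end{align*}
So $\nu$ is basic-exact, and the differential $d\colon \Omega_B^{2n-1}(\g)\to\Omega_B^{2n}(\g)$ is nonzero.

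Finally I would invoke the characterization established just before the corollary. That differential vanishes if and only if $\tr(M_x)=0$ for all $x\in\ker(\ad_\xi\vert_{\h})$, i.e.\ if and only if $(\g,\eta)$ is transversely unimodular. To make this airtight, I would check that every basic $(2n-1)$-form is of the form $\iota_x\nu$ with $x\in\ker(\ad_\xi\vert_{\h})$. This holds because $x\mapsto \iota_x\nu$ is an isomorphism $\h\to \alt^{2n-1}\h^*$, by the nondegeneracy of $\omega$ on $\h$ together with the identification of Lemma~\ref{lemma: an identification}, and basicness corresponds to $[\xi,x]=0$ by the displayed formula. In particular $\beta = \iota_x\nu$ for some such $x$, and $d_{\g}\beta = -\tr(M_x)\nu = -\nu$ forces $\tr(M_x)=1\neq 0$. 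This contradicts transverse unimodularity, hence $\xi\in[\g,\g]$.

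The main obstacle is this last identification step. One must confirm that $\mathcal{L}_\xi(\iota_x\nu)=\iota_{[\xi,x]}\nu$, which relies on $\mathcal{L}_\xi\nu=0$, and that basic forms on $\g$ correspond exactly to the $\iota_x\nu$ with $x$ in the kernel. Once that is granted, the argument is a direct degree-$(2n-1)$ version of the proof of Proposition~\ref{prop: its converse need not hold}.
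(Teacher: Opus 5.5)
Your proof is correct and is essentially the paper's proof run contrapositively. The paper chains transverse unimodularity $\Rightarrow$ $[\omega^n]_B\neq 0$ $\Rightarrow$ $[\omega]_B\neq 0$ $\Rightarrow$ $\xi\in[\g,\g]$ via Lemma~\ref{lemma: xi en el conmutador}. You make the middle step explicit through the basic primitive $\theta\wedge\omega^{n-1}$, and you spell out the identification of basic $(2n-1)$-forms with the forms $\iota_x\nu$ for $x\in\ker(\ad_\xi\vert_{\h})$, which the paper only states in the discussion preceding the corollary.
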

\begin{proof}
    While a similar argument as in the proof of Proposition \ref{prop: its converse need not hold} works, it is easier to point out that transverse unimodularity implies that $[\omega^n]_B$ is a nonzero class in $H_B^{2n}(\g)$, which in turn implies $[\omega]_B$ is a nonzero class in $H^2_B(\g)$, from where we get $\xi \in [\g,\g]$ by Lemma \ref{lemma: xi en el conmutador}. 
\end{proof}
    \indent Since $[\xi, x] = 0$ for all $x \in \ker(\ad_{\xi} \vert_{\h})$, the decomposition of $\ad_x$ with respect to the vector space splitting $\R \xi \oplus \h$ is just 
\begin{align*}
    \ad_x
    =
    \begin{pmatrix}
        0 & * \\
        0 & M_x
    \end{pmatrix}, \quad M_x := \mathrm{pr}_{\h} \circ \ad_x \in \End(\h),
\end{align*}    
    \noindent and thus $\tr(\ad_x) = \tr(M_x)$ for all $x \in \ker \eta$. Since transverse unimodularity concerns only those $x \in \h$ lying in $\ker(\ad_{\xi} \vert_{\h})$, we can only conclude that unimodular contact Lie algebras are transversely unimodular, but not the other way around.
    
    \indent The fact that Theorem \ref{thm: main result} gives the failure of transverse unimodularity on the stated hypothesis is of relevance in connection to the two main versions of the Lefschetz conditions proposed for the contact setting: one for the Chevalley-Eilenberg cohomology, related to the one introduced in \cite{Cagliari}, and another for the basic cohomology, as put forward in \cite{KA}. That is to say, since contact Lie algebras where the adjoint action $\ad_{\xi}$ of the Reeb vector is not nilpotent are not transversely unimodular (except for $\sl(2,\R)$ and $\su(2)$), this means that no such Lie algebra can be $0$-Lefschetz in either of the two versions.

\section{A useful decomposition for contact Lie algebras} \label{section: a useful decomposition}

\indent A natural $\ad_{\xi}$-invariant vector space splitting for a given contact Lie algebra $(\g, \eta)$ comes from the Jordan-Chevalley decomposition of $K := \ad_{\xi}$: if $K_s$ and $K_n$ denote the semisimple part and the nilpotent part of $K$ respectively, then
\begin{gather} \label{eq: the useful decomposition}
    \g = \ker K_s \oplus \im K_s
\end{gather}
    \noindent as vector spaces, since $\ker K_s^2 = \ker K_s$ precisely because $K_s$ is semisimple. For~convenience,~set
\begin{align*}
    \h := \ker \eta, \quad \t := \ker K_s, \quad \q := \im K_s, \quad \t_0 := \t \cap \h.
\end{align*}
    \noindent Thus, according to our conventions,  
\begin{align*}
    \g = \t \oplus \q = \R \xi \oplus \t_0 \oplus \q, \quad \h = \t_0 \oplus \q, 
\end{align*}    
    \noindent again as vector spaces. Clearly, $\q \subseteq \h$. As $K_s$ acts as the zero map in the $0$-eigenspace of $K = \ad_{\xi}$, we also get $\xi \in \t$. Since $K_s$ and $K_n$ are commuting polynomials in $K$, the subspaces $\h$, $\t$ and $\q$ are invariant under $K$, $K_s$ and $K_n$. In particular, the restrictions of any of these operators to either $\h$, $\t$, or $\q$ are all well-defined. Clearly, $K \vert_{\h} = K_s \vert_{\h} + K_n \vert_{\h}$ is the Jordan-Chevalley decomposition of $K \vert_{\h}$; the same is true for the restrictions to $\t$ and $\q$. Notice that $K \vert_{\q} = K_s \vert_{\q} \left(\Id_{\q} + (K_s\vert_{\q})^{-1} K_n \vert_{\q} \right)$, so $K \vert_{\q}$ is invertible; in contrast, $K \vert_{\t} = K_n \vert_{\t}$. We shall use all these facts without further mention.
      
    \indent The condition $\t = \R \xi$ holds precisely when $\h = \q$ (that is, $\t \cap \h = 0$), and has the special significance of rendering most of our treatment vacuous. However, up to isomorphism, only two contact Lie algebras fulfill this property.    
\begin{proposition} \label{prop: t = R xi means sl(2;R) or su(2)}
    If $\t = \R \xi$ then $\g$ is isomorphic to either $\su(2)$ or $\sl(2,\R)$.
\end{proposition}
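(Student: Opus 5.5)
The plan is to complexify and decompose $\g_{\C}$ into generalized eigenspaces of $K=\ad_{\xi}$, write $\g_{\C}=\bigoplus_{\lambda}\g_{\lambda}$, and exploit the hypothesis $\t=\R\xi$, which says exactly that $\g_0=\C\xi$. All other eigenvalues are nonzero and their generalized eigenspaces make up $\h_{\C}=\q_{\C}$. Two standard facts are used throughout.
\begin{enumerate}[\rm (a)]
    \item Since $K$ is a derivation, $[\g_{\lambda},\g_{\mu}]\subseteq \g_{\lambda+\mu}$.
    \item Since $\mathcal{L}_{\xi}\omega=0$, we have $\omega(Kx,y)+\omega(x,Ky)=0$, so $\omega$ pairs $\g_{\lambda}$ with $\g_{\mu}$ trivially unless $\lambda+\mu=0$. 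Hence nondegeneracy of $\omega$ on $\h$ forces the spectrum of $K|_{\h}$ to be symmetric, with $\omega$ restricting to a perfect pairing $\g_{\lambda}\times\g_{-\lambda}\to\C$.
\end{enumerate}
Combining (a) with $\g_0=\C\xi$, for $x\in\g_{\lambda}$ and $y\in\g_{-\lambda}$ we get $[x,y]=\eta([x,y])\xi=\omega(x,y)\xi$.

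Next I would pick a real linear functional $\ell$ on $\C$, generic in the sense that it is injective on the finite set of eigenvalues. Let $\lambda$ be an eigenvalue maximizing $\ell$. Take $x\in\g_{\lambda}$ to be a genuine eigenvector of $K$, so $[\xi,x]=\lambda x$, and choose $y\in\g_{-\lambda}$ with $\omega(x,y)=1$, so that $[x,y]=\xi$. Maximality of $\ell(\lambda)$ gives $[x,\g_{\mu}]=0$ whenever $\ell(\mu)>0$, and in particular $[x,\g_{\lambda}]=0$.

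I would then show that $y$ can be corrected to an eigenvector. Write $[\xi,y]=-\lambda y+n$ with $n\in\g_{-\lambda}$ coming from $K_n$. Apply the Jacobi identity to $(\xi,x,y)$: using $[\xi,\xi]=0$, it gives $[x,n]=0$, hence $\omega(x,n)=0$. Iterating this with powers of $K_n$, and adjusting $y$ by elements of $\g_{-\lambda}$ that are $\omega$-orthogonal to $x$, I aim to obtain $[\xi,y]=-\lambda y$. Then $\mathfrak s:=\Span\{\xi,x,y\}$ is a copy of $\sl(2,\C)$ in which $\xi$ is proportional to the semisimple element $h$, with weights $0,\pm\lambda$.

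Now regard $\g_{\C}$ as an $\mathfrak s$-module and split off $W$, an $\mathfrak s$-invariant complement of $\mathfrak s$; this is possible by Weyl's theorem. Because $\g_0=\C\xi\subseteq\mathfrak s$, the element $\xi$ has no zero weight on $W$. By $\sl_2$ theory every irreducible summand of $W$ is therefore even-dimensional, with weights that are odd multiples of $\lambda/2$. Maximality of $\ell(\lambda)$ then rules out weights $\pm 3\lambda/2$ and beyond, so every summand of $W$ is $2$-dimensional, with weights $\pm\lambda/2$.

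The crux, and the step I expect to be the main obstacle, is to show $W=0$. Suppose not, and take $w\in W_{\lambda/2}$ and $w'\in W_{-\lambda/2}$ with $\omega(w,w')=1$; this is possible by (b). Then $[w,w']=\xi$. Apply the Jacobi identity to $(x,w,w')$. We have $[x,w]\in\g_{3\lambda/2}=0$, while $[x,w']\in W_{\lambda/2}$, so Jacobi gives
\begin{align*}
    [x,\xi] = [[x,w],w'] + [w,[x,w']] = [w,[x,w']] \in [\g_{\lambda/2},\g_{\lambda/2}] \subseteq \g_{\lambda}.
\end{align*}
I would try to derive a contradiction from this by a trace computation. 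On the module $W$, the operator $\ad_{\xi}$ acts on each $2$-dimensional summand with eigenvalues $\pm\lambda/2$. On the other hand, $\ad_w$ and $\ad_{w'}$ shift weights by $\pm\lambda/2$, so the subspace $\g_{\lambda/2}\oplus\g_{-\lambda/2}\oplus\C\xi\oplus\g_{\pm\lambda}$ is stable, and the relation $\ad_{\xi}=[\ad_w,\ad_{w'}]$ forces the trace of $\ad_{\xi}$ on suitable stable pieces to vanish. My first attempt would be to compare these two computations to force $\dim W=0$. If that fails, the fallback is to compare the Killing-form pairing of $\xi$ with itself computed via $\mathfrak s$ and via $[w,w']=\xi$.

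Once $W=0$, we have $\g_{\C}\cong\sl(2,\C)$, so $\g$ is a real form of $\sl(2,\C)$, that is, $\su(2)$ or $\sl(2,\R)$.
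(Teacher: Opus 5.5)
Your route (complexify, grade by $\ad_{\xi}$, build an $\sl_2$-triple at an extremal eigenvalue, kill the complement) differs from the paper's. The paper shows the nilradical $\n$ vanishes: a nonzero $\xi$-component would force $\n=\R\xi$, which is impossible since $K$ is invertible on $\h$; and $\n\subseteq\h$ would lie in the kernel of $d\eta|_{\h}$. So $\g$ is semisimple, and Boothby--Wang applies. Completed, your route would avoid that citation, but as written it has two genuine gaps.

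First, the correction of $y$ to an eigenvector cannot be done as you describe. Since $[x,\g_{-\lambda}]\subseteq\C\xi$, the Jacobi identity on $(\xi,x,y)$ reduces to $\omega(Kx,y)+\omega(x,Ky)=0$, which is just fact (b), so iterating it gives nothing. In fact no adjustment of $y$ can work without further input. The maps $(K-\lambda)|_{\g_\lambda}$ and $-(K+\lambda)|_{\g_{-\lambda}}$ are $\omega$-adjoint, so $\im(K-\lambda)|_{\g_\lambda}$ is exactly the annihilator of $\ker(K+\lambda)\cap\g_{-\lambda}$. Hence if every Jordan block of $K$ at $\lambda$ had size at least $2$, your eigenvector $x$ would pair trivially with every eigenvector in $\g_{-\lambda}$. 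What is missing is a second use of maximality. Take Jacobi on $(x,x',y)$ with $x,x'\in\g_\lambda$ and $y\in\g_{-\lambda}$. Together with $[x,x']\in\g_{2\lambda}=0$, it gives $\omega(x,y)Kx'=\omega(x',y)Kx$. Since $K$ is invertible on $\g_\lambda$, this forces $\dim\g_\lambda=1$, hence $\dim\g_{-\lambda}=1$, and $y$ is automatically an eigenvector.

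Second, and more seriously, the step $W=0$, which is the heart of the proposition, is left open, and your trace idea cannot close it. The subspace you call stable is all of $\g_{\C}$, where $\tr(\ad_\xi)=0$ already holds by symmetry of the spectrum, so $\ad_\xi=[\ad_w,\ad_{w'}]$ adds nothing. The Killing-form fallback is automatically consistent when computed through $\mathfrak s$; computing it through $[w,w']=\xi$ needs the brackets $[W_{\lambda/2},W_{\lambda/2}]$, which you have not controlled. The identity you did derive from $(x,w,w')$ is not contradictory by itself. Write $V:=W_{\lambda/2}$, write $[w_1,w_2]=c(w_1,w_2)\,x$ on $V$ (recall $\g_\lambda=\C x$), and set $\phi:=\ad_x|_{W_{-\lambda/2}}\colon W_{-\lambda/2}\to V$. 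Then that identity only says $c(w,\phi(w'))=-\lambda\,\omega(w,w')$. You also need Jacobi on $(w_1,w_2,w')$ with $w_1,w_2\in V$ and $w'\in W_{-\lambda/2}$, which gives
\begin{align*}
    c(w_1,w_2)\,\phi(w') = \tfrac{\lambda}{2}\bigl(\omega(w_1,w')\,w_2-\omega(w_2,w')\,w_1\bigr).
\end{align*}
Applying $c(w_1,\cdot)$ to both sides and using the first identity yields $\tfrac{3\lambda}{2}\,\omega(w_1,w')\,c(w_1,w_2)=0$, so $c\equiv 0$. Then $\omega$ vanishes on $W_{\lambda/2}\times W_{-\lambda/2}$, contradicting (b) unless $W=0$. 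With these two additions your argument goes through.
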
 
\begin{proof}
    The statement can be established with a minor variation on the argument used in \cite[Proposition 8.1]{ACN}. We present an alternative, more elementary proof. Notice that the nilradical $\n$ of $\g$ is $K_s$-invariant since it is $K$-invariant. Assume there is some $x \in \n$ whose $\xi$-component is nonzero, and apply the spectral projector onto $\ker K_s = \R \xi$ to show that $\xi \in \n$. As $\n$ is nilpotent, Engel's theorem implies that $\ad_{\xi} \vert_{\n}$ is nilpotent. On the other hand, since $\n = (\n \cap \R\xi) \oplus (\n \cap \h)$ and $K = \ad_{\xi}$ vanishes on $\R\xi$ but is invertible on $\h = \q$, the map $\ad_{\xi} \vert_{\n}$ can be nilpotent only if $\n \cap \h = 0$, meaning $\n = \R\xi$. This situation is impossible, because otherwise
\begin{align*}
    [\xi, \h] = K(\h) = \h \subseteq \n \cap \h = 0,
\end{align*}    
    \noindent contradicting the invertibility of $K\vert_{\h}$. We conclude that every element of $\n$ lies in $\h$. Now, let $x \in \n$. Since $\n$ is an ideal in $\g$ and $\n \subseteq \h$, $[x,y] \in \n \subseteq \h$ for all $y \in \h$, and thus
\begin{align*}
    \iota_x(d_{\g} \eta)(y) = d_{\g} \eta(x,y) = - \eta([x,y]). = 0 \text{ for all $y \in \h$}
\end{align*}    
    \noindent The nondegeneracy of $d_{\g} \eta \vert_{\h}$ implies that $x = 0$, and thus $\n = 0$. Now observe that every nonzero solvable ideal contains a nonzero nilpotent ideal, namely its commutator. Thus, the vanishing of $\n$ implies that the radical of $\g$ is zero. Therefore, $\g$ is semisimple. By \cite[Theorem 5]{BW}, the only semisimple contact Lie
    algebras are $\su(2)$ and $\sl(2,\R)$. 
\end{proof} 
    \indent For the rest of the discussion, we implicitly assume that $\dim \t > 1$. Denote by $\eta \vert_{\t} := \eta_{\t}$ the restriction of $\eta$ to $\t$, and by $\omega_{\t} $ and $\omega_{\q}$ the restrictions of $\omega = - d_{\g} \eta$ to $\t_0$ and $\q$, respectively.
\begin{lemma} \label{lemma: simplecticidad}
    The following identities hold:
\begin{multicols}{3}
\begin{enumerate} [\rm (i)]
    \item $K \vert_{\h}$, $K_s \vert_{\h}\in \sp(\h, \omega)$.
    \item $\ker K_s \vert_{\h} = (\im K_s \vert_{\h} )^{\bot_{\omega}}$.
    \item $\omega(\t_0,\q) = 0$.
    \item $\omega = \omega_{\t} + \omega_{\q}$. 
    \item $\omega_{\t}$ is symplectic on $\t_0$.
    \item $\omega_{\q}$ is symplectic on $\q$. 
\end{enumerate}     
\end{multicols}   
\end{lemma}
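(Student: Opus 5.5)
The plan is to start from the identity $\ad_{\xi}^*\eta = 0$ established in \eqref{eq: Seta = 0} and upgrade it to $K\vert_{\h}\in\sp(\h,\omega)$. From there, items (ii)--(vi) follow by linear algebra: the Jordan--Chevalley components of an element of $\sp(\h,\omega)$ again lie in $\sp(\h,\omega)$.

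For (i), take $x,y\in\h$. Since $\omega = -d_{\g}\eta$, we have $\omega(x,y) = \eta([x,y])$. The Jacobi identity gives
\[
\omega(Kx,y)+\omega(x,Ky) = \eta([[\xi,x],y]+[x,[\xi,y]]) = \eta([\xi,[x,y]]) = 0,
\]
where the last equality uses $\im\ad_{\xi}\subseteq\ker\eta$. Hence $K\vert_{\h}\in\sp(\h,\omega)$. Now $\sp(\h,\omega)$ is an algebraic Lie algebra. Equivalently, if $A$ is $\omega$-skew then so is $-A^{\ast_\omega} = A$; the semisimple part of $-A^{\ast_\omega}$ is $-(A_s)^{\ast_\omega}$, and uniqueness of the Jordan--Chevalley decomposition forces $A_s = -(A_s)^{\ast_\omega}$. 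Combined with the fact, noted above, that $K_s\vert_{\h}$ is the semisimple part of $K\vert_{\h}$, this gives $K_s\vert_{\h}\in\sp(\h,\omega)$.

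For (ii), write $S := K_s\vert_{\h}$, which is skew. Then $\omega(Sx,y) = -\omega(x,Sy)$, so $y\in(\im S)^{\perp_\omega}$ iff $\omega(x,Sy)=0$ for all $x\in\h$. By nondegeneracy of $\omega$ on $\h$ this is equivalent to $Sy=0$. So $\ker S = (\im S)^{\perp_\omega}$. Because $\t$ and $\q$ are $K_s$-invariant with $\q\subseteq\h$, we get $\ker S = \t\cap\h = \t_0$. Also, $\im S = K_s(\t_0\oplus\q) = K_s(\q) = \q$, since $K_s$ is invertible on $\q$. This yields (ii), and (iii) is immediate from it.

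For the rest, (iv) follows from (iii) together with $\h = \t_0\oplus\q$, since all cross terms vanish. Then (v) and (vi) follow because a nondegenerate form that is an orthogonal direct sum of its restrictions must be nondegenerate on each summand: a vector in the radical of $\omega_{\t}$ would be $\omega$-orthogonal to all of $\h$.

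I expect the only genuinely nontrivial point to be the claim that $K_s$ inherits skewness. I will handle it by the uniqueness argument above, or alternatively by noting that $K_s$ is a polynomial in $K$ without constant term and checking the adjoint directly.
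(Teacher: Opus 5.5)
Your proposal is correct and follows the paper's proof essentially step for step: the same Jacobi computation for (i), the same identifications $\ker K_s\vert_{\h}=\t_0$ and $\im K_s\vert_{\h}=\q$ for (iii), and the same radical argument for (v) and (vi). The only differences are minor. For skewness of $K_s\vert_{\h}$ you use an elementary uniqueness-of-Jordan--Chevalley argument with the $\omega$-adjoint instead of citing \cite[Theorem 9.20]{FH}. For (ii) you give a direct equivalence via nondegeneracy rather than an inclusion plus a dimension count.
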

\begin{proof} \phantom{.}
\begin{enumerate} [\rm (i)]
    \item For all $u$, $v \in \h$, Jacobi on $(\xi, u,v)$ gives
\begin{align*}
     \omega(K \vert_{\h} u,v) + \omega(u, K \vert_{\h} v) &= - d \eta(K \vert_{\h} u,v) -  d \eta(u, K \vert_{\h}v) = \eta ([[ \xi,u], v] + [u,[\xi,v]] ) \\
    &= \eta ([\xi,[u,v]] ) = d \eta(\xi,[u,v]) = (\iota_{\xi} d \eta)([u,v]) = 0. 
\end{align*} 
    \noindent This shows $K \vert_{\h} \in \sp(\h, \omega)$. The second claim follows from \cite[Theorem 9.20]{FH} applied to the inclusion $\sp(\h,\omega) \hookrightarrow \mathfrak{gl}(\h)$, which is a representation of $\sp(\h,\omega)$. 
    \item For all $u \in \ker K_s \vert_{\h}$ and $v = K_s \vert_{\h} w \in \im K_s \vert_{\h}$, it follows from (i) that
\begin{align*}
    \omega(u,v) = \omega(u, K_s \vert_{\h} w) = - \omega(K_s\vert_{\h} u,w) = - \omega(0,w) = 0. 
\end{align*}    
    \noindent This shows $\ker K_s \vert_{\h} \subseteq (\im K_s \vert_{\h} )^{\bot_{\omega}}$. Equality follows from dimension counting, since
\begin{align*}
    \dim (\im K_s \vert_{\h} )^{\bot_{\omega}} = \dim \h - \dim \im K_s \vert_{\h} = \dim \ker K_s \vert_{\h},
\end{align*} 
    \item It is immediate from (ii), since $\ker K_s \vert_{\h} = \t_0$ and $\im K_s \vert_{\h} = \q$. 
    \item It is immediate from (iii). 
    \item Let $x \in \t_0$ such that $\omega_{\t}(x,\t_0) = 0$. From (iii), we have $\omega(x, \q) = 0$ as well. Thus $\omega(x, \h) = 0$. Since $\omega$ is nondegenerate on $\h$, it follows that $x = 0$. Therefore $\omega_{\t}$ is nondegenerate on $\t_0$.  
    \item It is similar to (v).  \qedhere 
\end{enumerate}    
\end{proof}     
    \indent Since $K$ is a derivation on $\g$, both $K_s$ and $K_n$ are derivations on $\g$: this fact follows from a standard argument using the decomposition of $\g$ into generalized eigenspaces of $K$.  
\begin{lemma} \label{lemma: q is a t-module}
    Both $[\t,\t] \subseteq \t$ and $[\t, \q] \subseteq \q$ hold. 
\end{lemma}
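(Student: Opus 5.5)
The plan is to use that $K_s$ is a derivation of $\g$, which the paper notes just before the statement. Then $\t = \ker K_s$ is a subalgebra, and $\q = \im K_s$ is invariant under $\ad_x$ for every $x \in \t$. Both inclusions follow from the Leibniz rule.

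\emph{First inclusion.} For $x, y \in \t$ we compute
\begin{align*}
    K_s[x,y] = [K_s x, y] + [x, K_s y] = 0,
\end{align*}
so $[x,y] \in \ker K_s = \t$. This gives $[\t,\t] \subseteq \t$.

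\emph{Second inclusion.} Let $x \in \t$ and $v = K_s w \in \q$. Since $K_s x = 0$, the Leibniz rule gives
\begin{align*}
    K_s[x,w] = [K_s x, w] + [x, K_s w] = [x,v].
\end{align*}
Hence $[x,v] = K_s([x,w]) \in \im K_s = \q$, which proves $[\t,\q] \subseteq \q$.

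The only point that might require justification is that $K_s$ is a derivation. If one wants to avoid citing it, the argument runs through the generalized eigenspaces $\g_\lambda$ of $K$, complexifying if needed. Since $K$ is a derivation, $[\g_\lambda, \g_\mu] \subseteq \g_{\lambda+\mu}$. The operator $K_s$ acts on $\g_\lambda$ as multiplication by $\lambda$, and this scalar action is additive under brackets, so $K_s$ satisfies the Leibniz rule. Because $K_s$ is real (it is a polynomial in $K$), the conclusion descends to $\g$.

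An alternative route gives the same conclusion in eigenspace language. In that description $\t_\C = \g_0$ and $\q_\C = \bigoplus_{\lambda \neq 0} \g_\lambda$, and both inclusions follow from $[\g_0, \g_\lambda] \subseteq \g_\lambda$.
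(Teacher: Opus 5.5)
Your proof is correct and is essentially the paper's argument: both inclusions come from the Leibniz rule for the derivation $K_s$, with $[x, K_s w] = K_s[x,w]$ for $x \in \t$ giving $[\t,\q] \subseteq \q$. Your generalized-eigenspace justification that $K_s$ is a derivation is the standard argument the paper alludes to just before the lemma.
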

\begin{proof}
    For the first claim, take $x$, $y \in \t$, and note that
\begin{align*}
    K_s([x,y]) = [K_s(x), y] + [x, K_s(y)] = 0.
\end{align*}
    \noindent For the second claim, take $x \in \t$ and $y \in \q$, where $y = K_s(z)$ for some $z \in \g$, and note that 
\begin{align*}
    [x,y] &= [x, K_s(z)] = K_s([x,z]) - [K_s(x),z] = K_s([x,z]) \in \im K_s = \q. \qedhere 
\end{align*} 
\end{proof}
    \indent Compare Lemma \ref{lemma: q is a t-module} with \cite[Proposition 8 (2)]{AFV}. Notice that their proof does not carry over to our setting. 

    \indent Lemma \ref{lemma: simplecticidad} (v) and Lemma \ref{lemma: q is a t-module} imply that $(\t, \eta_{\t})$ is a contact Lie subalgebra of $(\g, \eta)$. Lemma \ref{lemma: simplecticidad} (vi) implies that $\q$ is never a Lie subalgebra of $\g$, for otherwise $[\q,\q] \subseteq \q \subseteq \h = \ker \eta$ and $\eta([\q,\q]) = 0$, meaning that $\omega_{\q} = - (d \eta) \vert_{\q}$ is identically zero. Moreover, using Lemma \ref{lemma: casi todo jacobi} below, it is easy to find conditions ensuring $[\q,\q] \subseteq \t$.

    \indent Consider the skew-symmetric maps
\begin{align*}
    \alpha \colon \alt^2 \t^* \to \t_0, \quad \beta \colon\alt^2 \q^* \to \t_0, \quad \gamma \colon \alt^2 \q^* \to \q,
\end{align*}    
    \noindent defined as the $\t_0$-component of $[\t,\t]$, the $\t_0$-component of $[\q,\q]$, and the $\q$-component of $[\q,\q]$, respectively. A short computation shows that
\begin{gather*}
    [x,y] = \omega_{\t}(x,y) \xi +  \alpha(x,y) \text{ for all $x$, $y \in \t$}, \\
    [u,v] = \omega_{\q}(u,v) \xi + \beta(u,v) + \gamma(u,v) \text{ for all $u$, $v \in \q$}. 
\end{gather*}
    \noindent Indeed, if $\delta_1(\cdot, \cdot) \xi$ and $\delta_2(\cdot,\cdot) \xi$ denote the projection of $[\t,\t]$ and $[\q,\q]$ into $\R\xi$ respectively, then 
\begin{gather*}
    \delta_1(x,y) = \eta([x,y]) = - (d_{\g} \eta)(x,y) = \omega_{\t}(x,y) \text{ for all $x$, $y \in \t$}, \\ 
    \delta_2(u,v) = \eta([u,v]) = - (d_{\g} \eta)(u,v) = \omega_{\q}(u,v) \text{ for all $u$, $v \in \q$}. 
\end{gather*}
    \indent In combination with the fact that $\q = \im K_s$ is $K$-invariant, Lemma \ref{lemma: q is a t-module} allows for the good definition of the following operators:
\begin{align*}
    C := \ad_{\xi} \vert_{\t} \in \End(\t), \quad A := \ad_{\xi} \vert_{\q} \in \End(\q), \quad \rho(x) := \ad_x \vert_{\q} \in \End(\q) \quad (x\in \t_0).
\end{align*} 
    \indent We now derive from the Jacobi identity a number of relations and constraints for all the maps above, only a handful of which are of direct use for establishing Theorem \ref{thm: main result}. For its statement, we employ the usual pullback notation on bilinear forms: for example, if $x$, $y \in \t_0$, then $(C^* \omega_{\t})(x,y)$ means $\omega_{\t}(Cx,y) + \omega_{\t}(x,Cy)$. We also regard $\End(\q)$ as a Lie algebra with the usual commutator: $[M,N] := MN - NM$ for all $M$, $N \in \End(\q)$.    
\begin{lemma} \label{lemma: casi todo jacobi}
    The following identities hold for all $x$, $y$, $z \in \t_0$ and $u$, $v$, $w \in \q$: 
\begin{multicols}{2}
\begin{enumerate}[\rm (i)]
    \item $C^* \omega_{\t} = 0$. That is, $C \in \sp(\t_0,\omega_{\t})$.
    \item $A^* \omega_{\q} = 0$. That is, $A \in \sp(\q,\omega_{\q})$.
    \item $(C^* \alpha)(x,y) = C(\alpha(x,y))$. 
    \item $(A^*\beta)(u,v) = C(\beta(u,v))$.
    \item $(A^* \gamma)(u,v) = A (\gamma(u,v))$.
    \item $[A, \rho(x)] = \rho(Cx)$. 
    \item $[\rho(x), \rho(y)] = \rho( \alpha(x,y)) + \omega_{\t}(x,y) A$.
    \item $(\rho(x)^* \omega_{\q})(u,v) = \omega_{\t}(x, \beta(u,v))$. 
    \item $(\rho(x)^* \beta)(u,v) = \alpha(x, \beta(u,v)) -~\omega_{\q}(u,v) Cx$.
    \item $(\rho(x)^* \gamma)(u,v) = \rho(x)(\gamma(u,v))$.
    \item $\mathfrak{S}_{x,y,z} \; \omega_{\t} (\alpha(x,y),z) = 0$.    
    \item $\mathfrak {S}_{u,v,w} \; \omega_{\q}(\gamma(u,v),w) = 0$. 
    \item $\mathfrak{S}_{u,v,w} \; \beta(\gamma(u,v),w) = 0$. 
    \item $\mathfrak{S}_{x,y,z} \; \alpha(\alpha(x,y),z) = \mathfrak{S}_{x,y,z} \omega_{\t}(x,y) Cz$.  
    \item $\mathfrak{S}_{u,v,w} \gamma(\gamma(u,v),w) ) = \mathfrak{S}_{u,v,w} T(u,v,w)$.
\end{enumerate}
\end{multicols}
    \noindent Here, $\mathfrak{S}$ denotes cyclic sum; also, $T(u,v,w) := \omega_{\q}(u,v)Aw + \rho(\beta(u,v))w$. 
\end{lemma}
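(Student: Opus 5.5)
The plan is to expand the Jacobi identity $\mathfrak{S}[[a,b],c]=0$ for triples drawn from $\{\xi\}\cup\t_0\cup\q$, using the bracket formulas
\begin{align*}
    [x,y]=\omega_{\t}(x,y)\xi+\alpha(x,y), \quad [u,v]=\omega_{\q}(u,v)\xi+\beta(u,v)+\gamma(u,v),
\end{align*}
together with $[\xi,x]=Cx$, $[\xi,u]=Au$ and $[x,u]=\rho(x)u$ for $x\in\t_0$, $u\in\q$. Each triple gives one vector identity, and projecting it onto the three summands of $\g=\R\xi\oplus\t_0\oplus\q$ yields the listed scalar, $\t_0$-valued and $\q$-valued relations. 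One small point must be settled first: for $x\in\t_0$, is $Cx\in\t_0$? Since $\eta\circ\ad_\xi=0$, yes. So $C$ preserves $\t_0$, and we regard $C$ as acting on $\t_0$.

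The triples are organized by type.

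\textbf{Type $(\xi,x,y)$.} The $\xi$-component gives $\omega_{\t}(Cx,y)+\omega_{\t}(x,Cy)=0$, which is (i). This is also a special case of Lemma~\ref{lemma: simplecticidad}(i). The $\t_0$-component gives (iii).

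\textbf{Type $(\xi,u,v)$.} The $\xi$-, $\t_0$- and $\q$-components give (ii), (iv) and (v) respectively.

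\textbf{Type $(\xi,x,u)$.} Here the only component is the $\q$-component, since $[\t,\q]\subseteq\q$ by Lemma~\ref{lemma: q is a t-module}. Writing $[\xi,[x,u]]=[[\xi,x],u]+[x,[\xi,u]]$ gives $A\rho(x)u=\rho(Cx)u+\rho(x)Au$, which is (vi). We must use $[Cx,u]=\rho(Cx)u$; this is legitimate because $Cx\in\t_0$.

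\textbf{Type $(x,y,u)$.} We get $[[x,y],u]=\omega_{\t}(x,y)Au+\rho(\alpha(x,y))u$, and Jacobi converts this into (vii).

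\textbf{Type $(x,u,v)$.} Here
\begin{align*}
    [x,[u,v]]=\omega_{\q}(u,v)(-Cx)+[x,\beta(u,v)]+\rho(x)\gamma(u,v),
\end{align*}
where $[x,\beta(u,v)]=\omega_{\t}(x,\beta)\xi+\alpha(x,\beta)$. Equating this with $[[x,u],v]+[u,[x,v]]$ and projecting onto the three components yields (viii), (ix) and (x).

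\textbf{Type $(x,y,z)$.} Projecting onto $\R\xi$ and $\t_0$ gives (xi) and (xiv). The terms $\omega_{\t}(x,y)[\xi,z]=\omega_{\t}(x,y)Cz$ must be moved to the right-hand side, with attention to sign.

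\textbf{Type $(u,v,w)$.} The $\xi$-, $\t_0$- and $\q$-projections give (xii), (xiii) and (xv). Each $[[u,v],w]$ contributes $\omega_{\q}(u,v)Aw+\rho(\beta(u,v))w$ plus the bracket $[\gamma(u,v),w]$, which splits into three components. In the $\t_0$-projection, (xiii) comes out with only the $\beta(\gamma,\cdot)$ terms. This is because $[\xi,w]$ and $[\beta(u,v),w]$ both lie in $\q$, so they contribute nothing to $\t_0$.

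The only real obstacle is bookkeeping: sign conventions, in particular $[u,x]=-\rho(x)u$ and $\omega=-d\eta$ with $d\eta(a,b)=-\eta([a,b])$, and checking that the stated sign on each term (for instance $-\omega_{\q}(u,v)Cx$ in (ix)) matches. I would carry out the computation carefully for types $(x,u,v)$ and $(u,v,w)$, where the most terms appear, and treat the others briefly.
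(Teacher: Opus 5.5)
Your route is the paper's: expand the Jacobi identity on the seven triple types $(\xi,x,y)$, $(\xi,u,v)$, $(\xi,x,u)$, $(x,y,u)$, $(x,u,v)$, $(x,y,z)$, $(u,v,w)$ and project onto $\R\xi\oplus\t_0\oplus\q$. Items (i)--(xiii) come out exactly as printed. Your check that $Cx\in\t_0$ is needed for $[Cx,u]=\rho(Cx)u$ in (vi), and for placing $-\omega_{\q}(u,v)Cx$ in the $\t_0$-component in (ix); the paper uses this silently.

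The sign check you postponed does not come out as printed for (xiv) and (xv). The $\t_0$-component of $\mathfrak{S}[[x,y],z]=0$ is $\mathfrak{S}\bigl(\omega_{\t}(x,y)Cz+\alpha(\alpha(x,y),z)\bigr)=0$. The $\q$-component of $\mathfrak{S}[[u,v],w]=0$ is $\mathfrak{S}\bigl(T(u,v,w)+\gamma(\gamma(u,v),w)\bigr)=0$. So your computation proves
\[
\mathfrak{S}_{x,y,z}\,\alpha(\alpha(x,y),z)=-\mathfrak{S}_{x,y,z}\,\omega_{\t}(x,y)Cz, \qquad \mathfrak{S}_{u,v,w}\,\gamma(\gamma(u,v),w)=-\mathfrak{S}_{u,v,w}\,T(u,v,w),
\]
and the printed versions are genuinely false.

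For (xiv), take Example~\ref{ex: converse of main is false 1}. There $\ad_{\xi}$ is nilpotent, so $\t=\g$ and $\t_0=\ker\eta$ has basis $f_1=e_1$, $f_2=e_2-\xi$, $f_3=e_3$, $f_4=e_4-\xi$. One has $Cf_1=f_2$, $Cf_3=Cf_4=0$, $[f_1,f_3]=[f_2,f_3]=0$, $[f_1,f_4]=f_2$ and $[f_3,f_4]=\xi+f_4$. At $(f_1,f_3,f_4)$ the left side of (xiv) is $-f_2$ and the right side is $f_2$.

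For (xv), take $\g=\Span\{\xi,x,y,e_1,\dots,e_4\}$ with $\eta=\xi^*$ and brackets
\begin{itemize}
\item $[x,y]=\xi+y$;
\item $[\xi,e_i]=\lambda_ie_i$ and $[y,e_i]=-\lambda_ie_i$, where $(\lambda_i)=(1,2,-1,-2)$;
\item $[x,e_1]=\tfrac{1}{2}e_1$ and $[x,e_3]=\tfrac{1}{2}e_3$;
\item $[e_1,e_3]=\xi+y$, $[e_2,e_4]=\xi$, $[e_2,e_3]=-e_1$, $[e_1,e_4]=e_3$;
\end{itemize}
and all other brackets of basis vectors zero. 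This is a contact Lie algebra with $\t=\Span\{\xi,x,y\}$ and $\q=\Span\{e_i\}$. At $(e_1,e_4,e_2)$ the left side of (xv) is $e_1$, while $\mathfrak{S}T=-e_1$.

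The paper's own proof expands $[x,[y,z]]$ and $[u,[v,w]]$. It actually derives $\mathfrak{S}\alpha(x,\alpha(y,z))=\mathfrak{S}\omega_{\t}(x,y)Cz$ and $\mathfrak{S}\gamma(u,\gamma(v,w))=\mathfrak{S}T(u,v,w)$, which are equivalent to the displayed identities. The statement then transcribes them with the inner bracket in the first slot, which flips the sign. Only whether these cyclic sums vanish is used later (e.g.\ in Corollary~\ref{cor: when is alpha a lie bracket}), so nothing downstream is affected. You should nevertheless record (xiv) and (xv) with the corrected sign, rather than assert that the projections give them as stated.
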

\begin{proof} 
    Lemma \ref{lemma: simplecticidad} and Lemma \ref{lemma: q is a t-module} are being implicitly used throughout all the proof.  \\
    \indent For all $x$, $y\in\t_{0}$, Jacobi on $(\xi,x,y)$ gives
\begin{align*}
    C(\alpha(x,y)) &= [\xi, \alpha(x,y)] = [\xi,[x,y]] = [[\xi,x],y] + [x,[\xi,y]] \\
    &= [Cx,y] + [x,Cy] = C^*[x,y] = (C^* \omega_{\t})(x,y) \xi + (C^* \alpha)(x,y).
\end{align*}
    \noindent The $\xi$-component gives (i), and the $\t_0$-component gives (iii). 

    \indent For all $u$, $v \in \q$, Jacobi on $(\xi, u,v)$ gives 
\begin{align*}
   C(\beta(u,v)) + A(\gamma(u,v)) &= [\xi, \beta(u,v) + \gamma(u,v)] = [\xi,[u,v]] = [[\xi,u],v] + [u,[\xi,v]]  \\
   &= [Au,v] + [u,Av] = A^* [u,v] = (A^* \omega_{\q})(u,v) \xi + (A^*\beta)(u,v) + (A^*\gamma)(u,v).
\end{align*}
    \noindent The $\xi$-component gives (ii), the $\t_0$-component gives (iv), and the $\q$-component gives (v). 

    \indent For all $x \in \t_0$ and $u \in \q$, Jacobi on $(\xi,x,u)$ gives 
\begin{align*}
    \rho(Cx)u = [[\xi,x],u] &= [\xi, [x,u]] + [x,[u,\xi]] = A \rho(x) u - \rho(x) Au = [A, \rho(x)] u, 
\end{align*}
    \noindent whence (vi) follows.  

     \indent For all $x$, $y \in \t_0$ and $u \in \q$, Jacobi on $(x,y,u)$ gives 
\begin{align*}
    \rho( \alpha(x,y)) u + \omega_{\t}(x,y) A u &= [\alpha(x,y) + \omega_{\t}(x,y) \xi,u] = [[x,y],u] = [x, [y,u]] + [y, [u,x]] \\
    &= \rho(x)(\rho(y)u) - \rho(y)(\rho(x)u) = [\rho(x), \rho(y)]u, 
\end{align*} 
    \noindent whence (vii) follows. 

    \indent For all $x \in \t_0$ and $u$, $v \in \q$, Jacobi on $(x,u,v)$ gives
\begin{align*}
    [x,\omega_{\q}(u,v)\xi+\beta(u,v)+\gamma(u,v)] &= [x,[u,v]] = [[x,u],v] + [u,[x,v]] = [\rho(x)u,v] + [u,\rho(x)v] \\
    &= (\rho(x)^* \omega_{\q})(u,v)\xi + (\rho(x)^* \beta)(u,v) + (\rho(x)^* \gamma)(u,v).
\end{align*}
    \noindent The LHS is certainly equal to $\omega_{\t}(x,\beta(u,v)) \xi + \alpha(x,\beta(u,v)) - \omega_{\q}(u,v) Cx + \rho(x)\gamma(u,v)$. Thus, the $\xi$-component gives (viii), the $\t_0$-component gives (ix), and the $\q$-component gives (x).

    \indent For all $x$, $y$, $z \in \t_0$, Jacobi on $(x,y,z)$ gives
\begin{align*}
    [x,\omega_{\t}(y,z)\xi+\alpha(y,z)] &= [x,[y,z]] = [[x,y],z] + [y,[x,z]] \\
    &= [\omega_{\t}(x,y)\xi+\alpha(x,y),z] + [y,\omega_{\t}(x,z)\xi+\alpha(x,z)] \\
    &=  (\omega_{\t}(\alpha(x,y),z) + \omega_{\t}(y,\alpha(x,z)) )\xi \\
    & \quad + \alpha(\alpha(x,y),z) + \alpha(y,\alpha(x,z)) + \omega_{\t}(x,y) Cz - \omega_{\t}(x,z) Cy.
\end{align*}
    \noindent The LHS is certainly equal to $\omega_{\t}(x,\alpha(y,z)) \xi + \alpha(x,\alpha(y,z)) - \omega_{\t}(y,z) Cx$. Thus, the $\xi$-component gives (xi), and the $\t_0$-component gives (xiv).  

    \indent For all $u$, $v$, $w \in \q$, Jacobi on $(u,v,w)$ gives
\begin{align*}
    0 &= [u,[v,w]] + [v,[w,u]] + [w,[u,v]] \\
      &= \mathfrak{S}_{u,v,w} ([u,\omega_{\q}(v,w)\xi+\beta(v,w)+\gamma(v,w)] ).
\end{align*}
    \noindent The summand inside $\mathfrak{S}_{u,v,w}$ is certainly equal to
\begin{align*}
    \omega_{\q}(u,\gamma(v,w))\xi + \beta(u,\gamma(v,w)) + \gamma(u,\gamma(v,w))
    - \omega_{\q}(v,w) Au - \rho(\beta(v,w))u.
\end{align*}
    \noindent The $\xi$-component gives (xii), the $\t_0$-component gives  (xiii), and the $\q$-component gives (xv).
\end{proof} 
  \indent Notice that Lemma \ref{lemma: casi todo jacobi} (i) and (ii) follow from $\mathcal{L}_{\xi} \omega = 0$. In fact, 
\begin{gather*}
    0 = (\mathcal{L}_{\xi} \omega)(x,y) = (\ad_{\xi}^* \omega)(x,y) = (C^* \omega_{\t})(x,y) \text{ for all $x$, $y \in \t_0$}, \\
    0 = (\mathcal{L}_{\xi} \omega)(u,v) = (\ad_{\xi}^* \omega)(u,v) = (A^* \omega_{\q})(u,v) \text{ for all $u$, $v \in \q$}.
\end{gather*}
    \noindent Similarly, Lemma \ref{lemma: casi todo jacobi} (i), (ii), (viii), (xi), (xii) all follow from the facts $d_{\g} \omega = 0$ and $\iota_{\xi} \omega = 0$ by evaluating in $(\xi, x,y)$, $(\xi, u,v)$, $(x,u,v)$, $(x,y,z)$, and $(u,v,w)$, respectively. For instance, 
\begin{align*}
    0 &= - (d_{\g} \omega)(x,u,v) = \omega([x,u],v) + \omega([v,x], u) + \omega([u,v], x) \\
    &= \omega_{\q}(\rho(x)u,v) + \omega_{\q}(u, \rho(x)v) - \omega_{\t}(x, \beta(u,v)),
\end{align*}
    \noindent and analogously for the other ones. Interestingly enough, upon evaluating in $(x,y,u)$ we obtain the tautology $0 = 0$. 

    \indent We collect the following immediate consequences of Lemma \ref{lemma: casi todo jacobi} for future reference. 
\begin{corollary} \label{cor: b and b-omega}
    Set $\b := \im \beta$ and $\b^{\omega} := \b^{\bot_{\omega_{\t}}}$. Let $x \in \t_0$. 
\begin{multicols}{2}
\begin{enumerate}[\rm (i)]
    \item Both $\b$ and $\b^{\omega}$ are $C$-stable. 
    \item $[A,\rho(\b^{\omega})]\subseteq \rho(\b^{\omega})$.
    \item $\rho(x) \in \sp(\q,\omega_{\q})$ if and only if $x \in \b^{\omega}$.     
    \item $\tr(\rho(x))=0$ for every $x\in \b^{\omega}$.
\end{enumerate}    
\end{multicols}
\end{corollary}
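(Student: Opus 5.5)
Each item follows directly from one identity of Lemma \ref{lemma: casi todo jacobi}, together with the nondegeneracy of $\omega_{\t}$ on $\t_0$ (Lemma \ref{lemma: simplecticidad} (v)). I would take them in order.

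For (i), Lemma \ref{lemma: casi todo jacobi} (iv) gives $C(\beta(u,v)) = \beta(Au,v)+\beta(u,Av)$. The right-hand side lies in $\b = \im\beta$, since $\beta$ is skew-bilinear and $\b$ is the span of its values. Hence $C\b\subseteq\b$. By Lemma \ref{lemma: casi todo jacobi} (i), $C\in\sp(\t_0,\omega_{\t})$. For $x\in\b^{\omega}$ and $b\in\b$ this gives $\omega_{\t}(Cx,b) = -\omega_{\t}(x,Cb) = 0$, because $Cb\in\b$. So $\b^{\omega}$ is $C$-stable as well.

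For (iii), Lemma \ref{lemma: casi todo jacobi} (viii) reads $(\rho(x)^*\omega_{\q})(u,v) = \omega_{\t}(x,\beta(u,v))$. The left-hand side vanishes for all $u,v$ exactly when $\rho(x)\in\sp(\q,\omega_{\q})$. The right-hand side vanishes for all $u,v$ exactly when $x\perp_{\omega_{\t}}\b$, i.e. $x\in\b^{\omega}$.

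For (ii), Lemma \ref{lemma: casi todo jacobi} (vi) gives $[A,\rho(x)] = \rho(Cx)$. For $x\in\b^{\omega}$ we have $Cx\in\b^{\omega}$ by (i), so $[A,\rho(\b^{\omega})]\subseteq\rho(\b^{\omega})$.

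For (iv), elements of a symplectic Lie algebra $\sp(\q,\omega_{\q})$ are traceless. One way to see this: the eigenvalues of $M\in\sp$ come in pairs $\lambda,-\lambda$ with matching multiplicities. Alternatively, $M\in\sp(\q,\omega_{\q})$ means $\mathcal{L}$-type invariance $M^*\omega_{\q}=0$, which gives $M^*(\omega_{\q}^k) = 0$, while $M^*(\omega_{\q}^k)$ for the top power $k=\tfrac12\dim\q$ equals $\tr(M)\,\omega_{\q}^k$. Combining this with (iii) finishes (iv).

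None of the steps is a real obstacle. The only points needing care are using that $\b$ is the linear span of the values of $\beta$ in (i), and recalling the trace fact for symplectic endomorphisms in (iv).
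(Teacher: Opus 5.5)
Your proof is correct and follows the paper's argument item by item: Lemma \ref{lemma: casi todo jacobi} (iv) and (i) for (i), (vi) for (ii), (viii) for (iii), and the tracelessness of symplectic endomorphisms for (iv). In (i) you use the containment $C\b = \operatorname{span}\{(A^*\beta)(u,v)\} \subseteq \b$, which is the direction actually needed; the paper's text states it the other way round, as $\im \beta \subseteq \im(A^*\beta)$, which is not what the argument requires.
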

\begin{proof} \phantom{.}
\begin{enumerate} [\rm (i)]
    \item Since $\im \beta$ is contained in $\im (A^* \beta)$, Lemma \ref{lemma: casi todo jacobi} (iv) shows that $\b$ is $C$-stable. On the other hand, Lemma \ref{lemma: casi todo jacobi} (i) shows that $\b^{\omega}$ is $C$-stable.
    \item It follows from Lemma \ref{lemma: casi todo jacobi}(vi) and (i). 
    \item It is immediate from Lemma \ref{lemma: casi todo jacobi} (viii). 
    \item It follows from (iii), since every symplectic endomorphism is traceless. \qedhere 
\end{enumerate}
\end{proof}
    \indent Lemma \ref{lemma: casi todo jacobi} (ix) can be rephrased as $(\rho(x)^* \beta)(u,v) = [x, \beta(u,v)]$. Were $\t_0$ a Lie algebra, this would imply that $\b$ is an ideal in $\t_0$. This is usually not the case: as Lemma \ref{lemma: casi todo jacobi} (xiv) and (xv) show, in general neither $\alpha$ nor $\gamma$ are Lie brackets on $\t_0$ or $\q$. While we cannot give a clean characterization for when $\gamma$ is a Lie bracket, the situation for $C$ is different.  
\begin{corollary} \label{cor: when is alpha a lie bracket}
    $\alpha$ is a Lie bracket on $\t_0$ if and only if either $\dim \t_0 = 2$ or $C = 0$, the latter of which happens precisely when $\xi$ is central in $\t$. 
\end{corollary}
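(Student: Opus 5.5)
Everything reduces to Lemma~\ref{lemma: casi todo jacobi}~(xiv), which controls the Jacobiator of $\alpha$ on $\t_0$. By (xiv),
\begin{align*}
    \mathfrak{S}_{x,y,z}\, \alpha(\alpha(x,y),z) = \mathfrak{S}_{x,y,z}\, \omega_{\t}(x,y)\, Cz \quad \text{for all $x$, $y$, $z \in \t_0$},
\end{align*}
so $\alpha$ is a Lie bracket on $\t_0$ exactly when the right-hand side vanishes identically. Skew-symmetry of $\alpha$ is automatic from its definition, so only this vanishing matters. I will therefore prove two things: that the cyclic expression $\Phi(x,y,z) := \mathfrak{S}_{x,y,z}\, \omega_{\t}(x,y) Cz$ vanishes for all $x,y,z \in \t_0$ if and only if $\dim \t_0 = 2$ or $C\vert_{\t_0} = 0$, and that $C = 0$ is equivalent to $\xi$ being central in $\t$.

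For the "if" direction, $C = 0$ makes $\Phi$ vanish trivially. If $\dim \t_0 = 2$, any three vectors $x,y,z$ are linearly dependent. The map $\Phi$ is trilinear and alternating, because it is a cyclic sum of an alternating form tensored with a linear map. An alternating trilinear map on a two-dimensional space is zero, so $\Phi = 0$.

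For the converse, assume $\dim \t_0 \geq 4$ (it is even by Lemma~\ref{lemma: simplecticidad}~(v)) and $\Phi \equiv 0$, and show $Cz = 0$ for every $z$. Fix $z \in \t_0$. Because $\omega_{\t}$ is nondegenerate and $\dim \t_0 \geq 4$, the $\omega_{\t}$-orthogonal of $z$ has dimension at least $3$. I choose $x,y$ in $z^{\perp_{\omega_{\t}}}$ with $\omega_{\t}(x,y) = 1$; this is possible since $z^{\perp_{\omega_{\t}}}$ contains a symplectic plane, obtained by taking a symplectic complement of $\Span\{z,z'\}$ with $\omega_{\t}(z,z')=1$. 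Then $\omega_{\t}(y,z) = \omega_{\t}(z,x) = 0$, so $\Phi(x,y,z) = Cz = 0$. Hence $C\vert_{\t_0} = 0$.

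Finally, $C\xi = [\xi,\xi] = 0$ and $\t = \R\xi \oplus \t_0$, so $C = 0$ on all of $\t$, which says exactly that $[\xi,\t] = 0$, i.e. $\xi$ is central in $\t$. The main obstacle is only the linear-algebra step of finding a symplectic pair orthogonal to a given vector, and that needs $\dim \t_0 \geq 4$.
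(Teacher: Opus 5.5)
Your proof is correct and follows essentially the same route as the paper. Lemma~\ref{lemma: casi todo jacobi}~(xiv) reduces the question to the vanishing of the alternating map $\mathfrak{S}_{x,y,z}\,\omega_{\t}(x,y)Cz$. This vanishing is automatic when $\dim \t_0 = 2$, and for $\dim \t_0 \geq 4$ one picks an $\omega_{\t}$-symplectic pair orthogonal to a fixed vector to isolate $Cz$.

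Your justification of that pair, via the symplectic complement of $\Span\{z,z'\}$, is a bit more explicit than the paper's dimension count. You also cite item (xiv), which is the identity the paper's proof actually uses, although it writes ``(iv)''.
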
     
\begin{proof}
    According to Lemma \ref{lemma: casi todo jacobi} (iv), $\alpha$ is a Lie bracket on $\t_0$ if and only if the alternating trilinear map defined as $S(x,y,z) := \mathfrak{S}_{x,y,z} \omega_{\t}(x,y) Cz$ vanishes identically. If $\dim \t_0=2$ then this is automatic for dimensional reasons. If $\dim \t_0\geq 4$ then, for a fixed $x\in \t_0$, the dimension of $x^{\bot_{\omega_{\t}}}$ is at least $3$ and so there exist $y$, $z \in \t_0$ such that $\omega_{\t}(x,y) = \omega_{\t}(x,z) = 0$ and $\omega_{\t}(y,z) = 1$, from where it follows that the RHS is just $Cx$. The rest of the claim readily follows. 
\end{proof}
    \indent The situation when $C = 0$, and thus $\alpha$ is a Lie bracket, is discussed in detail in Section \ref{section: DS-contact} below. In there, contact Lie algebras for which $C$ is the zero map are called \textit{DS-contact Lie algebras}. 

    \indent If we are careful to define $\rho(\xi) := A$ then Lemma \ref{lemma: casi todo jacobi} (vii) becomes simply $[\rho(x), \rho(y)] = \rho([x,y])$ for all $x$, $y \in \t$. Together with Lemma \ref{lemma: casi todo jacobi} (vi), this shows that $\r := \rho(\t) \subseteq \End(\q)$ is a Lie subalgebra. The tautological action of $\r$ on $\q$ induces an action on $\alt^2 \q^*$, hence on $\Hom(\alt^2 \q^*, \q)$. By Lemma \ref{lemma: casi todo jacobi} (v) and (x), the map $\gamma \in \Hom(\alt^2 \q^*, \q)$ is $\r$-equivariant. This severely constrains $\gamma$: in fact, for every $D \in \r$, the fact that $D^* \gamma = D\gamma$ gives 
\begin{align*}
    \gamma( E_{\lambda}(D), E_{\mu}(D) ) \subseteq E_{\lambda + \mu}(D) \text{ for all $\lambda \in \spec(D_{\C})$},
\end{align*}
    \noindent where $E_{\lambda}(D)$ denotes the generalized eigenspace of the complexification $D_{\C}$ of $D$ with eigenvalue $\lambda \in \C$. In particular, $\gamma$ is identically zero as soon as there exists $D \in \r$ such that 
\begin{align*}
    \spec(D_{\C}) \cap (\spec(D_{\C}) + \spec(D_{\C})) = \emptyset.
\end{align*}
    \noindent Since Lemma \ref{lemma: casi todo jacobi} (xii), (xiii), and (xv) constrain $\gamma$ even further, one should expect $\gamma$ to vanish in most cases. As the next example shows, this is not always true.      
\begin{example}
    Let $\g$ be the Lie algebra with basis $\{\xi,x,y,e_1,e_2,e_3,e_4\}$ and bracket given by  
\begin{gather*}
    [\xi,e_1] = e_1, \quad [\xi,e_2] = 2e_2, \quad [\xi,e_3] = -e_3, \quad [\xi,e_4] = -2e_4,\\
    [x,e_1] = e_1, \quad [x,e_2] = e_2, 
    [x,y] = \xi+y,\\
    [y,e_1] = -e_1, \quad [y,e_2] = -2e_2, \quad [y,e_3] = e_3, \quad y,e_4] = 2e_4,\\
    [e_1,e_3] = \xi+y, \quad [e_2,e_4] = \xi+y, \quad [e_3,e_2] = e_1.
\end{gather*}
    \noindent Notice $\eta = \xi^*$ is a contact form on $\g$ with Reeb vector $\xi$. It is straightforward to check that, for $\t := \Span\{\xi, x, y\}$ and $\q := \Span \{ e_1, e_2, e_3, e_4\}$, both $\ad_{\xi} \vert_{\t} = 0$ and $A := \ad_{\xi} \vert_{\q} = \diag(1, 2, -1, -2)$. Clearly, $\gamma(e_e, e_2) = e_1$, so $\gamma \neq 0$. 
\end{example}     
    \indent Along the same lines as the last paragraph's remarks, Lemma \ref{lemma: casi todo jacobi} (iii) shows that $\alpha$ is $C$-equivariant, and so it is somewhat restrained. However, we are most interested in the case when $C$ is identically zero, not $\alpha$, as this is the main object of interest in Section \ref{section: DS-contact}. 
    
    \indent Apropos of the previous discussion, the next result can be established by elementary means. 
\begin{corollary} \label{cor: gamma = 0 when dim q = 2}
    If $\dim \q = 2$ then $\gamma = 0$. 
\end{corollary}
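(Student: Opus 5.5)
When $\dim \q = 2$, the space $\alt^2 \q^*$ is one-dimensional. So $\gamma$ is determined by the single vector $w_0 := \gamma(e_1,e_2) \in \q$, where $\{e_1,e_2\}$ is a basis of $\q$ with $\omega_{\q}(e_1,e_2) = 1$ (possible by Lemma \ref{lemma: simplecticidad} (vi)). The plan is to exploit the equivariance of $\gamma$ under $A = \ad_{\xi}\vert_{\q}$, namely Lemma \ref{lemma: casi todo jacobi} (v).

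The key steps are as follows.

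\begin{enumerate}[\rm (1)]
    \item Since $A \in \sp(\q,\omega_{\q})$ by Lemma \ref{lemma: casi todo jacobi} (ii), and $\dim \q = 2$, we have $\sp(\q,\omega_{\q}) = \sl(\q)$, so $\tr A = 0$.
    \item For any endomorphism $A$ of a two-dimensional space and any alternating bilinear map, $(A^*\gamma)(e_1,e_2) = \gamma(Ae_1,e_2) + \gamma(e_1,Ae_2) = \tr(A)\,\gamma(e_1,e_2)$. Indeed, writing $Ae_1 = a e_1 + c e_2$ and $Ae_2 = b e_1 + d e_2$, the left-hand side equals $(a+d)w_0$. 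Hence $(A^*\gamma)(e_1,e_2) = 0$.
    \item By Lemma \ref{lemma: casi todo jacobi} (v), $A(w_0) = A(\gamma(e_1,e_2)) = (A^*\gamma)(e_1,e_2) = 0$. So $w_0 \in \ker A \cap \q$.
    \item The discussion preceding the statement shows that $K\vert_{\q} = A$ is invertible. Therefore $w_0 = 0$, and hence $\gamma = 0$.
\end{enumerate}

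There is no real obstacle. The only point needing care is the identity $(A^*\gamma)(e_1,e_2) = \tr(A)\,\gamma(e_1,e_2)$ in step (2), which is a two-line computation with the basis. The invertibility of $A$ on $\q$ was already established right after the decomposition \eqref{eq: the useful decomposition}, as $K\vert_{\q} = K_s\vert_{\q}(\Id_{\q} + (K_s\vert_{\q})^{-1}K_n\vert_{\q})$.
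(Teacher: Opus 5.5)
Your proposal is correct and matches the paper's proof: since $\alt^2 \q^*$ is one-dimensional, Lemma~\ref{lemma: casi todo jacobi} (ii) and (v) give $A\gamma = A^*\gamma = \tr(A)\gamma = 0$, and the invertibility of $A = \ad_{\xi}\vert_{\q}$ then forces $\gamma = 0$. Your explicit basis computation of $(A^*\gamma)(e_1,e_2) = \tr(A)\,\gamma(e_1,e_2)$ simply spells out the step the paper states in one line.
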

\begin{proof}
    If $\dim \q = 2$ then $\dim \alt^2 \q^* = 1$, and therefore $\gamma$ is just a single vector in $\q$. From Lemma \ref{lemma: casi todo jacobi} (ii) and (v), we get $A \gamma = A^* \gamma = \tr(A) \gamma = 0$, and hence $\gamma = 0$ since $A = \ad_{\xi} \vert_{\q}$ is invertible. 
\end{proof}
    \indent Corollary \ref{cor: gamma = 0 when dim q = 2} generalizes the findings in \cite[Section 4]{ACN}, where it is shown by direct computation that $\gamma = 0$ when $\dim \q = 2$ and $(\g,\eta)$ is K-contact. We obtain a ``dual"\! result for $\beta$. 
\begin{corollary} \label{cor: beta neq when dim q geq 4}
    If $\beta = 0$ then $\dim \q = 2$.   
\end{corollary}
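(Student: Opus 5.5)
Assume $\beta = 0$; the plan is to show that $\q$ behaves like a two-dimensional piece, using the Jacobi constraints of Lemma \ref{lemma: casi todo jacobi}.

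With $\beta = 0$, Lemma \ref{lemma: casi todo jacobi} (viii) gives $\rho(x)^*\omega_{\q} = 0$ for every $x \in \t_0$, so $\rho(\t_0) \subseteq \sp(\q,\omega_{\q})$; Corollary \ref{cor: b and b-omega} (iii) says the same, since $\b^{\omega} = \t_0$. Lemma \ref{lemma: casi todo jacobi} (ix) then reads $0 = -\omega_{\q}(u,v)\, Cx$ for all $u,v \in \q$ and $x \in \t_0$. Because $\omega_{\q}$ is nonzero (it is symplectic on $\q \neq 0$), we get $C = 0$. Next, Lemma \ref{lemma: casi todo jacobi} (xv) simplifies to
\begin{align*}
    \mathfrak{S}_{u,v,w}\, \gamma(\gamma(u,v),w) = \mathfrak{S}_{u,v,w}\, \omega_{\q}(u,v) A w .
\end{align*}
The key step is to play this identity off against the equivariance of $\gamma$ under $A$, namely Lemma \ref{lemma: casi todo jacobi} (v), and against (xii). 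The same dimension trick as in Corollary \ref{cor: when is alpha a lie bracket} applies. Suppose $\dim \q \geq 4$. Fix $w \in \q$ and pick $u,v \in w^{\bot_{\omega_{\q}}}$ with $\omega_{\q}(u,v)=1$. The right-hand side is then $Aw$, so
\begin{align*}
    Aw = \mathfrak{S}_{u,v,w}\, \gamma(\gamma(u,v),w).
\end{align*}
It remains to show that the left side can be forced to vanish, or to be inconsistent with invertibility of $A$.

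To get that contradiction, work with generalized eigenspaces of $A_{\C}$. By (v), $\gamma(E_\lambda, E_\mu) \subseteq E_{\lambda+\mu}$, so the cyclic sum $\mathfrak{S}\,\gamma(\gamma(u,v),w)$ sends $E_\lambda \times E_\mu \times E_\nu$ into $E_{\lambda+\mu+\nu}$. Since $A \in \sp(\q,\omega_{\q})$ by (ii), $\omega_{\q}$ pairs $E_\lambda$ with $E_{-\lambda}$. Now choose $w \in E_\nu$ and a pair $u \in E_\lambda$, $v \in E_{-\lambda}$ with $\omega_{\q}(u,v)=1$, taken in the $\omega_{\q}$-complement of $E_{\nu} \oplus E_{-\nu}$. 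This is possible when $\dim \q \geq 4$, at least when the eigenvalues are not all $\pm\nu$; that case is handled by choosing the pair within $E_\nu \oplus E_{-\nu}$, orthogonal to $w$. The trilinear term then lands in $E_{\nu}$, matching $Aw$, so a weight argument alone does not finish the proof.

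I would finish instead with a trace argument. Contract the identity with $\omega_{\q}$: apply $\omega_{\q}(\cdot, z)$ and sum over a symplectic basis. Using (xii), which says that $\omega_{\q}(\gamma(\cdot,\cdot),\cdot)$ is totally alternating, the left side becomes a contraction of $\gamma$ composed with $\gamma$, while the right side becomes a multiple $(\dim \q - 2)\,\omega_{\q}(Aw,z)$ plus trace terms of $A$, which vanish. One also checks, using equivariance and the fact that $\tr$ of $\ad_{\gamma(u,\cdot)}$-type operators is $0$ by weight-shifting whenever $A$ has no zero eigenvalue, that the $\gamma\gamma$ contraction vanishes. Then $(\dim \q - 2) A = 0$, and invertibility of $A$ forces $\dim \q = 2$.

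The main obstacle is this last step: showing that the contracted $\gamma\circ\gamma$ term vanishes. I expect to need the totally alternating trilinear form $\Gamma(u,v,w) := \omega_{\q}(\gamma(u,v),w)$ from (xii), its $A$-invariance, and a careful weight bookkeeping.
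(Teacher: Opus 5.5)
There is a genuine gap, and it is exactly the step you flag: nothing shows that the contracted $\gamma\circ\gamma$ term vanishes, and the ingredients you name cannot show it. Your contraction of the right-hand side to $(\dim\q-2)A$ is fine, and so is $C=0$ from (ix), though the latter plays no role.

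First, Lemma~\ref{lemma: casi todo jacobi}~(xii) does not say that $\Theta(u,v,w):=\omega_{\q}(\gamma(u,v),w)$ is totally alternating. Since $\Theta$ is already skew in $(u,v)$, (xii) says that the totally alternating part of $\Theta$ is zero. Were $\Theta$ totally alternating, (xii) would read $3\Theta=0$, so $\gamma=0$, and your dimension trick would finish at once.

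Second, weight-shifting kills $\tr\gamma(u,\cdot)$, but that is not what you need. For $u\in E_\lambda$ the contracted term lies in $E_\lambda$, so it can only be detected by pairing with $z\in E_{-\lambda}$. What appears there is the trace of a composite of two $\gamma$-type operators shifting weights by $\lambda$ and $-\lambda$. That composite is weight-preserving, so nothing forces its trace to vanish.

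A concrete check: on $\q=\Span\{e_1,e_2,e_3,e_4\}$ let $\omega_{\q}=e^{13}+e^{24}$, $A=\diag(1,2,-1,-2)$, $\gamma(e_3,e_2)=a e_1$, $\gamma(e_1,e_4)=b e_3$, and $\gamma(e_i,e_j)=0$ for all other $i<j$. Then $A$ is invertible and (ii), (v) and (xii) hold; for (xii), every nonzero value of $\Theta$ has a repeated argument, so all cyclic sums vanish. Yet contracting $\mathfrak{S}_{u,v,w}\,\gamma(\gamma(u,v),w)$ in $(v,w)$ against $e_1\wedge e_3+e_2\wedge e_4$ gives $-2ab\,e_1\neq 0$ at $u=e_1$ when $ab\neq 0$. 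So equivariance, (xii) and weight bookkeeping cannot deliver the vanishing. Any contradiction has to be extracted from (xv) itself in uncontracted form; this $\gamma$ does violate (xv), e.g.\ on $(e_1,e_3,e_2)$. That extraction is the whole difficulty, and your proposal does not supply it.

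The missing idea is structural rather than computational. When $\beta=0$ one has $[\q,\q]\subseteq\R\xi\oplus\q$, so $\k:=\R\xi\oplus\q$ is a Lie subalgebra. Moreover, $\eta\vert_{\k}$ is a contact form on $\k$ with Reeb vector $\xi$: its kernel is $\q$, where $-d\eta$ restricts to the symplectic form $\omega_{\q}$. Because $A$ is invertible, the kernel of the semisimple part of $\ad_\xi\vert_{\k}$ is $\R\xi$. Proposition~\ref{prop: t = R xi means sl(2;R) or su(2)} then gives $\k\cong\su(2)$ or $\sl(2,\R)$, hence $\dim\q=2$.

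This is how the paper argues. The real input sits inside that proposition: the nilradical of $\k$ vanishes, so $\k$ is semisimple, and then Boothby--Wang applies. A finite list of consequences of the Jacobi identity does not obviously reproduce this.
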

\begin{proof}
    Notice that $\k := \R \xi \oplus \q$ is a Lie subalgebra of $\g$ because $\q$ is $\ad_{\xi}$ invariant and $[\q,\q] \subseteq \k$ since $\beta = 0$. Moreover, $\lambda := \eta \vert_{\k}$ is a contact form on $\k$ with Reeb vector $\xi$, since both $\ker \lambda = \q$ and $(d \lambda)\vert_{\q} = \omega_{\q}$ is symplectic. Since $K_s \vert_{\q}$ is invertible by definition of $\q$, it follows that $\ker( K_s \vert_{\k} ) = \R \xi$. Proposition \ref{prop: t = R xi means sl(2;R) or su(2)} gives $\dim \k = 3$, and thus $\dim \q = 2$. 
\end{proof}
    \indent There are examples of contact Lie algebras with $\beta = 0$ and $\dim \q = 2$: $\sl(2,\R)$ and $\su(2)$ fit the bill, but check also Proposition \ref{prop: 5-dimensional DS-contact Lie algebras} below. 

\section{Proof of the main result} \label{section: proof of the main result}

\indent Assume throughout this section that $\q \neq 0$ and $\dim \t \geq 3$ (which, following Proposition \ref{prop: t = R xi means sl(2;R) or su(2)}, merely excludes the Lie algebras $\sl(2,\R)$ and $\su(2)$). Let $\ell \in \g^*$ be the $1$-form on $\g$ defined by
\begin{align} \label{eq: ell form}
    \ell(\xi) = 0,  \quad \ell \vert_{\q} = 0,  \quad \ell(x) := \frac{1}{\dim \q} \tr(A^{-1} \rho(x)) \quad \text{($x \in \t_0$)}, 
\end{align}
    \noindent and let $\sigma \in \alt^2 \t^*$ be the $2$-form on $\t$ defined by 
\begin{align} \label{eq: sigma form}
    \iota_{\xi} \sigma = 0,  \quad \sigma(x,y) := \frac{1}{\dim \q}\tr (A^{-1}[\rho(x),\rho(y)] ) \quad \text{($x,y \in \t_0$)}.
\end{align}
    \noindent Since $[x,y] = \omega_{\t}(x,y)\xi + \alpha(x,y)$ for all $x$, $y \in \t_0$, we have $d_{\t}\ell(x,y) = -\ell([x,y]) = -\ell(\alpha(x,y))$. 
\begin{proposition} \label{prop: propiedades de ell}
    The following identities hold. 
\begin{multicols}{2}
\begin{enumerate} [\rm (i)]
    \item $\ell \circ C = 0$ on $\t_0$.
    \item $d_{\t} \ell = \omega_{\t} - \sigma$ on $\t$.
\end{enumerate}
\end{multicols}
\end{proposition}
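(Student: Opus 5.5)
Both identities come from Lemma \ref{lemma: casi todo jacobi} (vi) and (vii), using the cyclicity of the trace and the fact that $A$ is invertible.

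\textbf{Identity (i).} Let $x \in \t_0$. Lemma \ref{lemma: casi todo jacobi} (vi) gives $\rho(Cx) = [A,\rho(x)]$. Multiplying on the left by $A^{-1}$,
\begin{align*}
    A^{-1}\rho(Cx) = \rho(x) - A^{-1}\rho(x)A .
\end{align*}
The two terms on the right have equal trace, since $\tr(A^{-1}\rho(x)A) = \tr(\rho(x))$. Hence $\tr(A^{-1}\rho(Cx)) = 0$. Since $\ell(Cx) = \frac{1}{\dim\q}\tr(A^{-1}\rho(Cx))$, we get $\ell(Cx) = 0$. Note that $Cx \in \t_0$ because $C$ preserves $\h$ and $\t$, so the formula for $\ell$ applies to $Cx$.

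\textbf{Identity (ii): pairs in $\t_0$.} Take $x,y \in \t_0$. As noted before the statement, $d_{\t}\ell(x,y) = -\ell([x,y]) = -\ell(\alpha(x,y))$, because $\ell(\xi)=0$. Lemma \ref{lemma: casi todo jacobi} (vii) gives
\begin{align*}
    \rho(\alpha(x,y)) = [\rho(x),\rho(y)] - \omega_{\t}(x,y)A .
\end{align*}
Multiply by $A^{-1}$, take traces and divide by $\dim\q$. The term $-\omega_{\t}(x,y)A^{-1}A$ has trace $-\omega_{\t}(x,y)\dim\q$. This yields $\ell(\alpha(x,y)) = \sigma(x,y) - \omega_{\t}(x,y)$, that is, $d_{\t}\ell(x,y) = \omega_{\t}(x,y) - \sigma(x,y)$.

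\textbf{Identity (ii): pairs involving $\xi$.} For $y \in \t_0$ we have $[\xi,y] = Cy$, so $d_{\t}\ell(\xi,y) = -\ell(Cy) = 0$ by (i). On the other side, $\iota_{\xi}\sigma = 0$ by definition. We also need $\iota_{\xi}\omega_{\t}=0$, which holds when $\omega_{\t}$ is read as the restriction of $\omega = -d_{\g}\eta$ to $\t$, since $\iota_{\xi}\omega = 0$. So both sides vanish on pairs containing $\xi$. As $\t = \R\xi \oplus \t_0$, the two cases together give $d_{\t}\ell = \omega_{\t} - \sigma$ on all of $\t$.

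The only point that needs any care is the trace cancellation in (i), and the bookkeeping that $\omega_{\t}$ is regarded as a $2$-form on $\t$ annihilated by $\xi$.
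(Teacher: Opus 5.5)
Your proof is correct and follows the paper's argument essentially verbatim. Part (i) uses Lemma \ref{lemma: casi todo jacobi} (vi) together with conjugation-invariance of the trace, and part (ii) multiplies Lemma \ref{lemma: casi todo jacobi} (vii) by $A^{-1}$ and takes traces on $\t_0$, then checks pairs containing $\xi$ via (i); your explicit remarks that $Cx \in \t_0$ and that $\omega_{\t}$ is read as a $2$-form on $\t$ annihilated by $\xi$ make precise bookkeeping the paper leaves implicit.
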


\begin{proof} \phantom{.}
\begin{enumerate} [\rm (i)]
    \item Let $x \in \t_0$. Recall from Lemma \ref{lemma: casi todo jacobi} (vi) that $[A,\rho(x)] = \rho(Cx)$, from where it follows that 
\begin{align*}
    (\dim \q)\,\ell(Cx)
    = \tr (A^{-1}\rho(Cx) )
    = \tr (A^{-1}[A,\rho(x)] )
    = \tr (\rho(x)-A^{-1}\rho(x)A )=0.
\end{align*}
    \item Let $x$, $y \in \t_0$. Recall from Lemma \ref{lemma: casi todo jacobi} (vii) that $[\rho(x), \rho(y)] = \rho(\alpha(x,y)) + \omega_{\t}(x,y) A$. Multiplying this identity by $A^{-1}$ and taking trace, we obtain  
 \begin{align*}
     \tr(A^{-1} [\rho(x), \rho(y)]) = (\dim \q) \omega_{\t}(x,y) + \tr(A^{-1} \rho(\alpha(x,y))),
 \end{align*}
    \noindent hence the identity holds on $\alt^2 \t_0^*$. It also holds in $\R \xi \wedge \t$, because
\begin{align*}
    (d_{\t} \ell)(\xi,x) &= -\ell([\xi,x]) = -\ell(Cx)=0 \quad \text{and} \quad \omega_{\t}(\xi,x) = \sigma(\xi,x)=0. \qedhere 
\end{align*}   
\end{enumerate}
\end{proof} 
    \indent From Lemma \ref{lemma: simplecticidad} (v) we know that $\omega_{\t}$ is nondegenerate on $\t_0$. Thus, there exists a unique $e \in \t_0$ satisfying $\ell = - \iota_e \omega$ on $\t_0$. Since $[\t,\t] \subseteq \t$ and $[\t,\q] \subseteq \q$, as guaranteed by Lemma \ref{lemma: q is a t-module}, the matrix $\ad_e$ with respect to the vector space splitting $\g = \t \oplus \q$ is 
\begin{align*}
    \ad_e = 
    \begin{pmatrix}
        \ad_e \vert_{\t} & 0 \\
        0 & \ad_e \vert_{\q}
    \end{pmatrix}.
\end{align*}
    \noindent Since $\t$ is a subalgebra of $\g$, $\ad_e \vert_{\t}$ represents both the restriction of $\ad_e$ to $\t$ and the adjoint operator of $e$ inside $\t$. In particular,
\begin{align} \label{eq: la estrategia}
    \tr(\ad_e) = \tr(\ad_e \vert_{\t}) + \tr(\ad_e \vert_{\q}). 
\end{align}
    \noindent Our goal is to show that the first term is positive, and that the second is nonnegative. 
\begin{lemma} \label{lemma: se usa luego}
    If $z \in \ker C$ then $\iota_z \sigma = 0$. In particular, both $[\xi, e] = 0$ and $\iota_e \sigma = 0$.  
\end{lemma}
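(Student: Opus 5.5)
Fix $z \in \ker C$, so $z \in \t_0$ and $[\xi,z] = 0$. The plan is to show $\sigma(z,y) = 0$ for every $y \in \t_0$. Since $\iota_{\xi}\sigma = 0$ by definition, this gives $\iota_z \sigma = 0$ on all of $\t$. The key input is Lemma~\ref{lemma: casi todo jacobi}~(vi), which reads $[A,\rho(z)] = \rho(Cz) = 0$. So $A$ commutes with $\rho(z)$, and hence $A^{-1}$ commutes with $\rho(z)$ as well.

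Then, for $y \in \t_0$, use cyclicity of the trace:
\begin{align*}
    (\dim \q)\,\sigma(z,y) &= \tr\bigl(A^{-1}\rho(z)\rho(y)\bigr) - \tr\bigl(A^{-1}\rho(y)\rho(z)\bigr) \\
    &= \tr\bigl(\rho(z)A^{-1}\rho(y)\bigr) - \tr\bigl(\rho(z)A^{-1}\rho(y)\bigr) = 0.
\end{align*}
In the second term I move $\rho(z)$ to the front by cyclicity. In the first term I commute $A^{-1}$ past $\rho(z)$. This proves the first claim.

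For the \emph{in particular} part, it suffices to show $Ce = 0$; then $\iota_e\sigma = 0$ follows from the first claim. By definition $e$ satisfies $\ell = -\iota_e\omega_{\t}$ on $\t_0$. By Proposition~\ref{prop: propiedades de ell}~(i), $\ell \circ C = 0$ on $\t_0$, and by Lemma~\ref{lemma: casi todo jacobi}~(i), $C \in \sp(\t_0,\omega_{\t})$. So for every $x \in \t_0$,
\[
    0 = \ell(Cx) = -\omega_{\t}(e, Cx) = \omega_{\t}(Ce, x).
\]
Nondegeneracy of $\omega_{\t}$ on $\t_0$ (Lemma~\ref{lemma: simplecticidad}~(v)) then gives $Ce = 0$, that is, $[\xi,e] = 0$.

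The only subtle point is bookkeeping. $C$ is defined on $\t$, but it preserves $\t_0$ since $\im \ad_{\xi} \subseteq \h$. So the symplectic identity applies to $C|_{\t_0}$, which is where $e$ and $x$ live. Nothing else is an obstacle.
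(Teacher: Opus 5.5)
Your proof is correct and is essentially the paper's argument. Lemma~\ref{lemma: casi todo jacobi}~(vi) gives $[A,\rho(z)]=0$, and trace cyclicity then kills $\sigma(z,\cdot)$; Proposition~\ref{prop: propiedades de ell}~(i), together with $C\in\sp(\t_0,\omega_{\t})$ and nondegeneracy of $\omega_{\t}$, yields $Ce=[\xi,e]=0$. Your trace-level computation is actually slightly cleaner than the paper's. The paper displays the operator identity $A^{-1}[\rho(z),\rho(x)]=[A^{-1}\rho(z),\rho(x)]$, but the two sides differ by $[\rho(x),A^{-1}]\rho(z)$, so only the traced version follows from $[A,\rho(z)]=0$.
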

\begin{proof}
    According to Lemma \ref{lemma: casi todo jacobi} (vi), if $z \in \ker C$ then $[A, \rho(z)] = 0$, and so
\begin{align*}
    A^{-1}[\rho(z),\rho(x)] = [A^{-1}\rho(z),\rho(x)] \text{ for all $x \in \t_0$}. 
\end{align*}
    \noindent Since the trace of a commutator vanishes, we obtain 
\begin{align*}
    (\iota_z \sigma)(x) = \sigma(z,x) &= \frac{1}{\dim\q} \tr([A^{-1}\rho(z),\rho(x)]) = 0 \text{ for all $x \in \t_0$}. 
\end{align*}
    \noindent For the second part, notice that Proposition \ref{prop: propiedades de ell} (i) implies 
\begin{align*}
    0 = \ell(Cx) = - \omega(e,Cx) = \omega(Ce,x) \text{ for all $x \in \t_0$}, 
\end{align*}  
    \noindent thus the nondegeneracy of $\omega$ forces $0 = Ce = [e, \xi]$, hence $\iota_e \sigma = 0$ from the first part.   
\end{proof} 
\begin{proposition} \label{prop: el primer término es positivo}
    $\tr(\ad_e \vert_{\t}) = \frac{\dim \t_0}{2} > 0$. In particular, $\t$ is not unimodular.  
\end{proposition}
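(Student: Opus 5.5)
The plan is to reduce $\tr(\ad_e\vert_{\t})$ to the trace of a single operator on $\t_0$, and to compute that trace with the identities of Lemma~\ref{lemma: casi todo jacobi} and Proposition~\ref{prop: propiedades de ell}.

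\textbf{Step 1: reduce to an operator on $\t_0$.} Write $\t=\R\xi\oplus\t_0$. Lemma~\ref{lemma: se usa luego} gives $[e,\xi]=0$, so the $\xi$-column of $\ad_e\vert_{\t}$ vanishes. For $x\in\t_0$ we have $[e,x]=\omega_{\t}(e,x)\xi+\alpha(e,x)$. The $\xi$-component of $[e,x]$ lies off the diagonal. Hence
\[
\tr(\ad_e\vert_{\t})=\tr(D),\qquad D:=\alpha(e,\cdot)\in\End(\t_0).
\]

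\textbf{Step 2: show $D^*\omega_{\t}=\omega_{\t}-\sigma$.} Apply Lemma~\ref{lemma: casi todo jacobi}~(xi) with first argument $e$. This gives
\[
\omega_{\t}(\alpha(e,x),y)+\omega_{\t}(x,\alpha(e,y))=-\omega_{\t}(\alpha(x,y),e)=\omega_{\t}(e,\alpha(x,y)).
\]
By definition of $e$ we have $\ell=-\iota_e\omega$ on $\t_0$, and $(d_{\t}\ell)(x,y)=-\ell(\alpha(x,y))$. So the right-hand side equals $(d_{\t}\ell)(x,y)$. By Proposition~\ref{prop: propiedades de ell}~(ii) this is $\omega_{\t}(x,y)-\sigma(x,y)$. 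Therefore $D^*\omega_{\t}=\omega_{\t}-\sigma$ on $\t_0$.

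\textbf{Step 3: take the $\omega_{\t}$-trace.} Let $\Omega:=\omega_{\t}^{\flat}$ and $\Sigma:=\sigma^{\flat}$ be the maps $\t_0\to\t_0^*$. Then $D^*\omega_{\t}$ corresponds to $D^t\Omega+\Omega D$, and $\tr(\Omega^{-1}(D^t\Omega+\Omega D))=2\tr D$. Step~2 then yields
\[
2\tr D=\dim\t_0-\tr(\Omega^{-1}\Sigma).
\]
Equivalently, one can compute $d_{\t}\iota_e(\eta_{\t}\wedge\omega_{\t}^m)$ with $2m=\dim\t_0$. This leads to the same formula in the form
\[
\tr(\ad_e\vert_{\t})\,\eta\wedge\omega_{\t}^m=m\,\eta\wedge\omega_{\t}^m-m\,\eta\wedge\sigma\wedge\omega_{\t}^{m-1}.
\]
So the claim $\tr(\ad_e\vert_{\t})=\tfrac{\dim\t_0}{2}$ is equivalent to $\sigma\wedge\omega_{\t}^{m-1}=0$, that is, to the vanishing of the $\omega_{\t}$-trace of $\sigma$.

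\textbf{Step 4 (main obstacle): show that the $\omega_{\t}$-trace of $\sigma$ vanishes.} What is available is the following. Lemma~\ref{lemma: se usa luego} gives $\iota_z\sigma=0$ for $z\in\ker C$. Lemma~\ref{lemma: casi todo jacobi}~(i) gives $C\in\sp(\t_0,\omega_{\t})$. The operator $C=K_n\vert_{\t_0}$ is nilpotent. Lemma~\ref{lemma: casi todo jacobi}~(vi) gives $[A,\rho(x)]=\rho(Cx)$.

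I would first choose an $\omega_{\t}$-symplectic basis adapted to the Jordan chains of the nilpotent symplectic map $C$, and compute $\sum_i\sigma(a_i,b_i)$. Within a chain $v, Cv,\dots,C^kv$, the pairing of $C^jv$ with a suitable dual element pairs a vector of "lower level" against one of "higher level". For such pairs I would use $A^{-1}\rho(Cx)=\rho(x)-A^{-1}\rho(x)A$ to write
\[
\tr(A^{-1}[\rho(x),\rho(y)])
\]
as a difference of terms that telescope along each chain. The last element of each chain lies in $\ker C$, where $\iota_z\sigma=0$ kills the boundary term. If this direct bookkeeping gets out of hand, a fallback is to show that $\Omega^{-1}\Sigma$ is nilpotent by proving that it strictly lowers the filtration $\ker C\subseteq\ker C^2\subseteq\cdots$.

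Once Step~4 is done, $\tr(\ad_e\vert_{\t})=\tfrac{\dim\t_0}{2}$. This is positive because $\dim\t\geq 3$. Finally, $e\in\t$, so $\ad_e\vert_{\t}$ is the adjoint of $e$ in the Lie algebra $\t$, and hence $\t$ is not unimodular.
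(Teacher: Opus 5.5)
\textbf{There is a genuine gap at Step~4.} Your Steps 1--3 are correct and coincide with the paper's argument. Your $D$ is the paper's $M=\mathrm{pr}_{\t_0}\circ\ad_e\vert_{\t_0}$, and your identity $D^*\omega_{\t}=\omega_{\t}-\sigma$ (obtained from Lemma~\ref{lemma: casi todo jacobi}~(xi)) is the paper's $\mathcal{L}_e\omega_{\t}=-d_{\t}\ell=-\omega_{\t}+\sigma$. Step~4, however, is left as two unexecuted strategies. Write $\sigma=\omega_{\t}(N\cdot,\cdot)$, so $N=\Omega^{-1}\Sigma$. You need $\tr N=0$, and neither strategy runs on the facts you list. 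Knowing $N(\ker C)=0$ says nothing about $N$ on $\ker C^2$, so the filtration claim is unsupported. Telescoping along Jordan chains needs to move $C$ from one slot of $\sigma$ to the other, which you have not justified. The missing ingredient is that $\sigma$ is $C$-invariant.

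The paper's proof is not a safe guide at this point. It asserts $N\in\sp(\t_0,\omega_{\t})$ ``since $\sigma$ is skew-symmetric''. But skewness of $\sigma$ makes $N$ $\omega_{\t}$-self-adjoint, which is also what its step $M-\tfrac12(\Id-N)\in\sp(\t_0,\omega_{\t})$ actually uses. Self-adjoint maps need not be traceless: $\sigma=\omega_{\t}$ would give $N=\Id$. So you were right not to take $\tr N=0$ for granted.

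\textbf{How to close it.} For $x,y\in\t_0$, Lemma~\ref{lemma: casi todo jacobi}~(vi) and the Jacobi identity in $\End(\q)$ give
\begin{align*}
\sigma(Cx,y)+\sigma(x,Cy)&=\tfrac{1}{\dim\q}\tr\bigl(A^{-1}\bigl([[A,\rho(x)],\rho(y)]+[\rho(x),[A,\rho(y)]]\bigr)\bigr)\\
&=\tfrac{1}{\dim\q}\tr\bigl(A^{-1}[A,[\rho(x),\rho(y)]]\bigr)=0,
\end{align*}
because $\tr(A^{-1}[A,Z])=\tr Z-\tr(A^{-1}ZA)=0$ for every $Z\in\End(\q)$. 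Since $C\in\sp(\t_0,\omega_{\t})$ by Lemma~\ref{lemma: casi todo jacobi}~(i), the left-hand side equals $\omega_{\t}((NC-CN)x,y)$, hence $NC=CN$.

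Now $C=K_n\vert_{\t_0}$ is nilpotent, and $N$ vanishes on $\ker C$ by Lemma~\ref{lemma: se usa luego}. If $x\in\ker C^k$, then $C^{k-1}Nx=NC^{k-1}x=0$, so $N(\ker C^k)\subseteq\ker C^{k-1}$. Thus $N$ is nilpotent and $\tr N=0$, and your Step~3 formula gives $\tr(\ad_e\vert_{\t})=\tfrac{\dim\t_0}{2}$. Your fallback is therefore the right proof once this commutation lemma is supplied. The same lemma, in the form $\sigma(C^jv,w)=(-1)^j\sigma(v,C^jw)$, is also what would make your chain-telescoping work.
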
 
\begin{proof}
     The first part of Lemma \ref{lemma: se usa luego} implies that 
\begin{align*}
    \ad_e \vert_{\t}
    =
    \begin{pmatrix}
        0 & * \\
        0 & M
    \end{pmatrix}, \quad M := \mathrm{pr}_{\t_0}\circ \ad_e \vert_{\t_0} \in \End(\t_0),
\end{align*}    
    \noindent where $\mathrm{pr}_{\t_0}:\t \to \t_0$ denotes the canonical linear projection with respect to the vector space splitting $\t = \R \xi \oplus \t_0$. In particular, $\tr(\ad_e\vert_{\t}) = \tr(M)$. Since $\omega_{\t}$ is nondegenerate on $\t_0$, there exists a unique endomorphism $N \in \End(\t_0)$ satisfying
\begin{align*}
    \sigma(x,y) = \omega_{\t}(Nx,y) \text{ for all $x$, $y \in \t_0$}.
\end{align*}
    \noindent Notice that $N \in \sp(\t_0,\omega_{\t})$ since $\sigma$ is skew-symmetric. By Proposition \ref{prop: propiedades de ell} (ii),  we get 
\begin{align*}
    \mathcal{L}_e \omega_{\t} = d_{\t}(\iota_e\omega_{\t})= -d_{\t}\ell = -\omega_{\t} + \sigma,
\end{align*}
    \noindent which is to say
\begin{align*}
    -\omega_{\t}(Mx,y)-\omega_{\t}(x,My) = -\omega_{\t}(x,y)+\omega_{\t}(Nx,y) \text{ for all $x$, $y \in \t_0$}.
\end{align*}
    \noindent Equivalently, $M - \frac{1}{2}(\Id - N)\in \sp(\t_0,\omega_{\t})$. Therefore, both $N$ and $M - \frac{1}{2} (\Id - N)$ are symplectic, and therefore traceless. Hence, 
\begin{align*}
    \tr(\ad_e \vert_{\t}) &= \tr(M) = \frac{1}{2} \tr(\Id-N) = \frac{\dim \t_0}{2}. \qedhere 
\end{align*} 
\end{proof} 
    \indent Let $\g_{\C}$ be the complexification of $\g$. Decompose $\q_{\C}$ into generalized eigenspaces $E_{\lambda}$ of $A_{\C}$ as
\begin{align*}
    \q_{\C} = \bigoplus_{\lambda \in \spec(A_{\C})} E_{\lambda}. 
\end{align*} 
    \indent Since $A$ is invertible, $0 \notin \spec(A_{\C})$. Set $\Omega_{\q} := - \ell \circ \beta$, viewed on $\g_{\C}$. We omit the symbol $\C$ to avoid overloading the notation. 
\begin{lemma} \label{lemma: generalized eigenspaces}
    Let $\lambda$, $\mu \in \C^{\times}$ such that $\lambda + \mu \neq 0$. 
\begin{multicols}{2}
\begin{enumerate} [\rm (i)]
    \item $\omega_{\q}(E_{\lambda}, E_{\mu}) = 0$. 
    \item $\Omega_{\q}(E_{\lambda}, E_{\mu}) = 0$.
\end{enumerate}    
\end{multicols}
\end{lemma}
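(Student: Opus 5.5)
The plan is to show that both forms are invariant under $A$, that is, $A^*\omega_{\q}=0$ and $A^*\Omega_{\q}=0$ (complex-bilinearly extended). We then apply the standard weight argument for invariant bilinear forms on generalized eigenspaces.

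For (i), Lemma \ref{lemma: casi todo jacobi} (ii) gives $\omega_{\q}(Au,v)+\omega_{\q}(u,Av)=0$ for all $u$, $v\in\q$, and this persists after complexification. For (ii), we use Lemma \ref{lemma: casi todo jacobi} (iv), namely $(A^*\beta)(u,v)=C(\beta(u,v))$. Applying $\ell$ and using Proposition \ref{prop: propiedades de ell} (i), $\ell\circ C=0$ on $\t_0$, we obtain
\begin{align*}
    (A^*\Omega_{\q})(u,v) = -\ell\big((A^*\beta)(u,v)\big) = -\ell\big(C\beta(u,v)\big) = 0 .
\end{align*}
Hence $\Omega_{\q}$ is also $A$-invariant.

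The key step is then a general claim. Let $B$ be a bilinear form on $\q_{\C}$ with $B(Au,v)+B(u,Av)=0$. Then $B(E_\lambda,E_\mu)=0$ whenever $\lambda+\mu\neq0$. To prove it, set $N_\lambda:=A-\lambda$ on $E_\lambda$ and $N_\mu:=A-\mu$ on $E_\mu$; both are nilpotent, say with $N_\lambda^{p}=0$ and $N_\mu^{q}=0$. Invariance can be rewritten as
\begin{align*}
    (\lambda+\mu)B(u,v) = -B(N_\lambda u,v)-B(u,N_\mu v) .
\end{align*}
We argue by induction on $(a,b)$, the nilpotency orders of $u$ and $v$, meaning the least $a$, $b$ with $N_\lambda^a u=0$ and $N_\mu^b v=0$. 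If $a=0$ or $b=0$ then one argument vanishes and so does $B(u,v)$. Otherwise both terms on the right have strictly smaller total order, so they vanish by induction. Since $\lambda+\mu\neq0$, it follows that $B(u,v)=0$.

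Alternatively, one can iterate the identity $(\lambda+\mu)^{p+q}B(u,v)=(-1)^{p+q}\sum\binom{p+q}{k}B(N_\lambda^k u,N_\mu^{p+q-k}v)$, which vanishes. Applying the claim with $B=\omega_{\q}$ and with $B=\Omega_{\q}$ yields (i) and (ii).

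The only point requiring care is (ii): we must check that $\Omega_{\q}$ is $A$-invariant, which hinges on the identity $\ell\circ C=0$ established earlier. Everything else is routine linear algebra.
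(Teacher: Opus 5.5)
Your proof is correct, and it differs from the paper's in how the nilpotent part of $A$ is handled.

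The paper writes $A = A_s + A_n$ (Jordan--Chevalley decomposition). It then invokes the fact that representations of the semisimple Lie algebra $\sp(\q,\omega_{\q})$ preserve Jordan decompositions (Fulton--Harris, Theorem 9.20), and concludes that $A_s$ also preserves $\omega_{\q}$ and annihilates $\Omega_{\q}$. Since $A_s$ acts on $E_\lambda$ by the scalar $\lambda$, the relation $(\lambda+\mu)B(u,v)=0$ then follows in one line. You instead keep $A$ itself and absorb the nilpotent parts $N_\lambda$, $N_\mu$ by induction on nilpotency orders, or by the binomial iteration. This is fully elementary and works verbatim for any $A$-invariant bilinear form, with no appeal to representation theory. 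The paper's route is shorter once the black box is accepted; yours is self-contained.

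Your argument also gets the invariance step exactly right. $A^*\Omega_{\q}=0$ comes from Lemma~\ref{lemma: casi todo jacobi}~(iv) combined with $\ell\circ C=0$, as you derive it. The paper instead cites item (ii) at that point, and lets its representation $\pi$ act on $\Hom(\alt^2\q,\t_0)$. There $\pi(A)\beta = C\circ\beta$ need not vanish, so the relevant representation space is really $\alt^2\q^*$ (scalar-valued forms), which is what your argument uses.
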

\begin{proof} 
    Let $A = A_s + A_n$ be the Jordan-Chevalley decomposition of $A$. Recall $A \in \mathfrak{sp}(\q, \omega_{\q})$ from Lemma \ref{lemma: simplecticidad} (i). Notice that the inclusion $\sp(\q, \omega_{\q}) \hookrightarrow \End(\q)$ and the map 
\begin{align*}
    \pi: \sp(\q,\omega_\q) \to \End(\Hom(\alt^2\q,\t_0)), \quad (\pi(D)\theta)(x,y) := \theta(Dx,y) + \theta(x,Dy)
\end{align*}
    \noindent define Lie algebra representations of $\sp(\q,\omega_{\q})$. Combining Lemma \ref{lemma: casi todo jacobi} (ii) and Proposition \ref{prop: propiedades de ell} (i), we get from the definitions of $\Omega$ and $\pi$ that $\pi(A)\Omega = 0$. Applying \cite[Theorem 9.20]{FH} to each representation, we obtain that $A_s \in \sp(\q, \omega_{\q})$ and that $\pi(A_s) \Omega = 0$. 
\begin{enumerate} [\rm (i)]
    \item Since $A_s \vert_{E_{\lambda}} = \lambda \Id$, for $x \in E_{\lambda}$ and $y \in E_{\mu}$ we get
\begin{align*}
    \omega_{\q}(x,y) &= \frac{1}{\lambda} \omega_{\q}(A_s x, y) = - \frac{1}{\lambda} \omega_{\q}(x, A_s y) = - \frac{\mu}{\lambda} \omega_{\q}(x,y).  
\end{align*}
    \item For $x \in E_{\lambda}$ and $y \in E_{\mu}$, we get 
\begin{align*}
    0 &= (\pi(A_s) \Omega_{\q})(x,y) = \Omega_{\q}(A_s x,y) + \Omega_{\q}(x,A_s y) = (\lambda +\mu) \Omega_{\q}(x,y). \qedhere 
\end{align*} 
\end{enumerate}    
\end{proof} 
    \indent Let $\spec(A_{\C})_+$ be the subset in $\spec(A_{\C})$ of either positive real part or zero real part and positive imaginary part. For the next part, we work with a single $\lambda \in \spec(A_{\C})_+$ at a time. Since $\omega_{\q}$ is nondegenerate on $\q$, it induces a nondegenerate pairing $\omega_{\q}:E_{\lambda} \times E_{-\lambda} \to \C$ for each $\lambda \in \spec(A_{\C})$.
\begin{remark} \label{obs: dim ker (alt^2 A) leq m^2}
    The nondegeneracy of $\omega_{\q}$ on each $E_{\lambda} \times E_{-\lambda}$ implies that $\dim E_{\lambda} = \dim E_{-\lambda}$ for all $\lambda \in \spec(A_{\C})$. Let's call $r_{\lambda}$ that common dimension. Denote by $\alt^2 A :\alt^2 \q^* \to \alt^2 \q^*$ the map given by $(\alt^2 A)(u,v) := Au \wedge v + u \wedge Av$. The semisimple part of the complexification of $\alt^2 A$ acts on $E_{\lambda} \wedge E_{\mu}$ by $\lambda + \mu$, and so $\ker \alt^2 A$ lies on $\bigoplus_{\lambda \in \spec(A_{\C})_+} E_{\lambda} \wedge E_{-\lambda}$. In particular, 
\begin{align*}
    \dim \ker (\alt^2 A) \leq \sum_{\lambda \in \spec(A_{\C})} r_{\lambda}^2 \leq \left( \frac{\dim \q}{2} \right)^2. 
\end{align*}
\end{remark}
    \indent The nondegeneracy of $\omega_{\q}$ on $E_{\lambda} \times E_{-\lambda}$ implies that there exists a unique $T_{\lambda} \in E_{\lambda}$ such that
\begin{align} \label{eq: ecuacion maestra}
    \omega(e, \beta(u,v)) = \Omega_{\q}(u,v) = \omega_{\q}(T_{\lambda} u,v) \text{ for all $u \in E_{\lambda}$ and $v \in E_{-\lambda}$}.
\end{align}
    \noindent Combining Lemma \ref{lemma: casi todo jacobi} (vi) and Lemma \ref{lemma: se usa luego}, we see that $A$ and $\rho(e) = \ad_e \vert_{\q}$ commute; in particular, $\ad_e$ preserves the generalized eigenspaces of $A$. Hence, the operators 
\begin{align*}
    P_{\lambda} := \ad_e \vert_{E_{\lambda}} \in \End(E_{\lambda}), \quad Q_{\lambda} := \ad_e \vert_{E_{-\lambda}} \in \End(E_{-\lambda})
\end{align*}
    \noindent are well-defined. Denote by $Q_{\lambda}^{\dagger} \in \End(E_{-\lambda})$ the $\omega_{\q}$-adjoint of $Q_{\lambda}$.
\begin{lemma} \label{lemma: formulas for T, P, and Q}
    \phantom{.}
\begin{multicols}{2}
\begin{enumerate} [\rm (i)]
    \item $T_{\lambda} = P_{\lambda} + Q_{\lambda}^{\dagger}$.
    \item $T_{\lambda} = T_{\lambda} P_{\lambda} + Q_{\lambda}^{\dagger} T_{\lambda}$. 
\end{enumerate}
\end{multicols}
\end{lemma}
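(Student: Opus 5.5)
The plan is to express both identities through the defining relation \eqref{eq: ecuacion maestra} and then read off operator equations from the nondegeneracy of the pairing $\omega_{\q}\colon E_{\lambda}\times E_{-\lambda}\to\C$. Throughout, $T_{\lambda}$, $P_{\lambda}$ and $Q_{\lambda}^{\dagger}$ are regarded as endomorphisms of $E_{\lambda}$. Here $Q_{\lambda}^{\dagger}$ is characterized by $\omega_{\q}(Q_{\lambda}^{\dagger}u,v)=\omega_{\q}(u,Q_{\lambda}v)$ for $u\in E_{\lambda}$ and $v\in E_{-\lambda}$. The two key facts are that $\ell=-\iota_e\omega_{\t}$ on $\t_0$, so that $\Omega_{\q}(u,v)=-\ell(\beta(u,v))=\omega_{\t}(e,\beta(u,v))$, and Lemma \ref{lemma: se usa luego}, which gives $Ce=0$ and $\iota_e\sigma=0$.

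For (i), I apply Lemma \ref{lemma: casi todo jacobi} (viii) with $x=e$. For $u\in E_{\lambda}$ and $v\in E_{-\lambda}$ this gives
\begin{align*}
  \omega_{\q}(T_{\lambda}u,v)=\omega_{\t}(e,\beta(u,v))=\omega_{\q}(\rho(e)u,v)+\omega_{\q}(u,\rho(e)v)=\omega_{\q}(P_{\lambda}u,v)+\omega_{\q}(Q_{\lambda}^{\dagger}u,v).
\end{align*}
The last step uses that $\rho(e)=\ad_e\vert_{\q}$ preserves $E_{\lambda}$ and $E_{-\lambda}$; this holds because $\rho(e)$ commutes with $A$, as noted just before the statement. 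Since $v\in E_{-\lambda}$ is arbitrary and the pairing is nondegenerate, $T_{\lambda}=P_{\lambda}+Q_{\lambda}^{\dagger}$.

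For (ii), I use Lemma \ref{lemma: casi todo jacobi} (ix) with $x=e$. Since $Ce=0$, it reads $\beta(\rho(e)u,v)+\beta(u,\rho(e)v)=\alpha(e,\beta(u,v))$. Applying $-\ell$ turns the left side into $\Omega_{\q}(P_{\lambda}u,v)+\Omega_{\q}(u,Q_{\lambda}v)$. For the right side I use $d_{\t}\ell(x,y)=-\ell(\alpha(x,y))$ together with Proposition \ref{prop: propiedades de ell} (ii):
\begin{align*}
  -\ell(\alpha(e,w))=d_{\t}\ell(e,w)=\omega_{\t}(e,w)-\sigma(e,w)=\omega_{\t}(e,w), \qquad w=\beta(u,v)\in\t_0,
\end{align*}
where the last equality holds because $\iota_e\sigma=0$. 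This yields $\Omega_{\q}(P_{\lambda}u,v)+\Omega_{\q}(u,Q_{\lambda}v)=\Omega_{\q}(u,v)$. Substituting \eqref{eq: ecuacion maestra} in each term, which is legitimate since $P_{\lambda}u\in E_{\lambda}$ and $Q_{\lambda}v\in E_{-\lambda}$, gives
\begin{align*}
  \omega_{\q}(T_{\lambda}P_{\lambda}u,v)+\omega_{\q}(Q_{\lambda}^{\dagger}T_{\lambda}u,v)=\omega_{\q}(T_{\lambda}u,v).
\end{align*}
Nondegeneracy again gives $T_{\lambda}=T_{\lambda}P_{\lambda}+Q_{\lambda}^{\dagger}T_{\lambda}$.

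The main points needing care are the sign bookkeeping and the identification $\ell(\alpha(e,w))=-\omega_{\t}(e,w)$. The latter is where $\iota_e\sigma=0$ and $Ce=0$ from Lemma \ref{lemma: se usa luego} enter, so I would double-check the sign conventions of $d_{\t}\ell$ and of $\ell=-\iota_e\omega$ at that step. I would also check that everything extends $\C$-linearly to the complexification, which it does since all identities used are bilinear.
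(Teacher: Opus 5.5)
Your proposal is correct and follows essentially the same route as the paper: part (ii) is argued identically (Lemma~\ref{lemma: casi todo jacobi}~(ix) at $x=e$ with $Ce=0$, then applying $-\ell$, using Proposition~\ref{prop: propiedades de ell}~(ii) and $\iota_e\sigma=0$), while for part (i) you cite Lemma~\ref{lemma: casi todo jacobi}~(viii) at $x=e$ where the paper expands $-(d_{\g}\omega)(e,u,v)=0$ directly, which is the same identity. Your reading of $Q_{\lambda}^{\dagger}$ as an endomorphism of $E_{\lambda}$ (rather than of $E_{-\lambda}$, as the paper's text literally states) is the one the proof actually requires.
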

\begin{proof} \phantom{.}
\begin{enumerate} [\rm (i)]
    \item Take $u \in E_{\lambda}$ and $v \in E_{-\lambda}$. Recall that $d_{\g}\omega = 0$ and $\iota_{\xi} \omega = 0$, that $\omega(\gamma(u,v),e) = 0$ due to Lemma \ref{lemma: simplecticidad} (iii), and that $\omega_{\q}(T_{\lambda} u,v) =  \omega(e, \beta(u,v))$ as observed in equation \eqref{eq: ecuacion maestra}. Then, 
\begin{align*}
    0 &= - (d_{\g}\omega)(e,u,v) = \omega([e,u],v) + \omega(u,[e,v]) + \omega([u,v],e) \\
    &= \omega(P_{\lambda} u ,v) + \omega(u, Q_{\lambda} v) + \omega( \omega_{\q}(u,v) \xi + \beta(u,v) + \gamma(u,v), e) \\
    &= \omega(P_{\lambda} u ,v) + \omega(Q_{\lambda}^{\dagger} u, v) - \omega(e, \beta(u,v)) = \omega_{\q}(P_{\lambda} u ,v) + \omega_{\q}(Q_{\lambda}^{\dagger} u, v) - \omega_{\q}(T_{\lambda}u,v).
\end{align*}
    \indent The claim follows from the nondegeneracy of $\omega_{\q}$ on $E_{\lambda} \times E_{-\lambda}$. 
    \item Take $u \in E_{\lambda}$ and $v \in E_{-\lambda}$. Combining  Lemma \ref{lemma: casi todo jacobi} (ix) and Lemma \ref{lemma: se usa luego}, we obtain
\begin{align*}
    \beta(P_{\lambda}u,v) + \beta(u,Q_{\lambda} v) = \alpha(e, \beta(u,v)).
\end{align*}
    \noindent Now apply $- \ell$ to both sides. The resulting expression can be simplified with the definition of $\Omega_{\q}$, equation \eqref{eq: ecuacion maestra}, Proposition \ref{prop: propiedades de ell} (i), and Lemma \ref{lemma: se usa luego} again: 
\begin{align*}
    \omega_{\q}(T_{\lambda} P_{\lambda} u,v) + \omega_{\q}(Q_{\lambda}^{\dagger} T_{\lambda}u,v) &= \Omega_{\q}(P_{\lambda} u, v) + \Omega_{\q}(u, Q_{\lambda}v) = - \ell( [e, \beta(u,v)]_{\t_0}) =(d_{\t} \ell)(e, \beta(u,v)) \\
    &= \omega_{\t}(e, \beta(u,v)) - \sigma(e, \beta(u,v)) = \Omega_{\q}(u,v) = \omega_{\q}(T_{\lambda} u,v). 
\end{align*}
    \noindent The claim follows from the nondegeneracy of $\omega_{\q}$ on $E_{\lambda} \times E_{-\lambda}$. \qedhere 
\end{enumerate}
\end{proof}
    \indent Lemma \ref{lemma: formulas for T, P, and Q} gives valuable information about the spectrum of $T_{\lambda}$. 
\begin{proposition} \label{prop: espectro de T}
    Both $\spec(T_{\lambda}) \subseteq \{0,1\}$ and $\tr(T_{\lambda}) \geq 0$. 
\end{proposition}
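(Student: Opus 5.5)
The plan is to eliminate $Q_{\lambda}^{\dagger}$ between the two identities of Lemma \ref{lemma: formulas for T, P, and Q}. This leaves a single matrix identity relating $T_{\lambda}$ and $P_{\lambda}$, from which the spectrum is read off by a trace argument. All operators live on the finite-dimensional complex space $E_{\lambda}$. Drop the subscript $\lambda$ and write $T$, $P$, $Q^{\dagger}$.

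\textbf{Step 1: the commutator identity.} By Lemma \ref{lemma: formulas for T, P, and Q} (i), $Q^{\dagger} = T - P$. Substituting this into (ii) gives
\begin{align*}
    T = TP + (T-P)T = TP + T^2 - PT,
\end{align*}
that is,
\begin{align*}
    T - T^2 = [T,P].
\end{align*}

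\textbf{Step 2: vanishing of traces.} Multiply the identity of Step 1 on the right by $T^k$ with $k \geq 0$. Since $T^k$ commutes with $T$,
\begin{align*}
    [T,P]\,T^k = TPT^k - PT^{k+1} = [T, PT^k],
\end{align*}
which is a commutator and hence traceless. Therefore $\tr(T^{k+1}) = \tr(T^{k+2})$ for every $k \geq 0$. Equivalently,
\begin{align*}
    \tr\bigl(g(T)\,(T - T^2)\bigr) = 0
\end{align*}
for every complex polynomial $g$, since $g(T)(T - T^2)$ is a linear combination of the differences $T^{j} - T^{j+1}$ with $j \geq 1$.

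\textbf{Step 3: reading off the spectrum.} Let $\mu_1, \dots, \mu_s$ be the distinct eigenvalues of $T$ with algebraic multiplicities $m_1, \dots, m_s$. Using triangularization, the identity of Step 2 becomes
\begin{align*}
    \sum_i m_i\, g(\mu_i)\,(\mu_i - \mu_i^2) = 0.
\end{align*}
For a fixed $i_0$, take $g$ to be the Lagrange interpolating polynomial with $g(\mu_{i_0}) = 1$ and $g(\mu_i) = 0$ for $i \neq i_0$. This gives $m_{i_0}(\mu_{i_0} - \mu_{i_0}^2) = 0$, so $\mu_{i_0} \in \{0,1\}$. Hence $\spec(T_{\lambda}) \subseteq \{0,1\}$. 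Then $\tr(T_{\lambda})$ equals the algebraic multiplicity of the eigenvalue $1$, a nonnegative integer, so $\tr(T_{\lambda}) \geq 0$.

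The only nonroutine point is Step 1, namely recognizing that eliminating $Q^{\dagger}$ turns (ii) into a quadratic relation of the form "$T - T^2$ is a commutator with $P$". Once that is seen, the trace computation is automatic; all that is needed is to remember that $T$ commutes with its own powers, which is what allows the commutator to absorb $T^k$.
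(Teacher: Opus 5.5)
Your proof is correct and follows the paper's route: you eliminate $Q_{\lambda}^{\dagger}$ to obtain $T_{\lambda} - T_{\lambda}^2 = [T_{\lambda},P_{\lambda}]$, then run a trace argument localized at each eigenvalue of $T_{\lambda}$. The only difference is how you localize. The paper compresses to each generalized eigenspace $W_{\mu}$ via the spectral projector, while you multiply by Lagrange interpolation polynomials in $T_{\lambda}$; since spectral projectors are themselves polynomials in $T_{\lambda}$, this is the same argument in different packaging.
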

\begin{proof}
    Substitute the formula in Lemma \ref{lemma: formulas for T, P, and Q} (i) into the formula in Lemma \ref{lemma: formulas for T, P, and Q} (ii) to obtain
\begin{align*}
    T_{\lambda} = T_{\lambda} P_{\lambda}+(T_{\lambda} - P_{\lambda}) T_{\lambda} = T_{\lambda}^2 + [T_{\lambda},P_{\lambda}].
\end{align*}
    \noindent Let $\mu \in \spec(T_{\lambda})$ be an eigenvalue of $T_{\lambda}$ with generalized eigenspace $W_{\mu}$ and spectral projector $\Pi_{\mu}$. Define $T_{\lambda, \mu} := T_{\lambda} \vert_{W_{\mu}}$ and $P_{\lambda, \mu} := \Pi_{\mu} P_{\lambda} \vert_{W_{\mu}}$. Since $\Pi_{\mu}$ and $T_{\lambda}$ commute, we get $T_{\lambda, \mu} - T_{\lambda, \mu}^2 = [T_{\lambda, \mu},P_{\lambda, \mu}]$, hence $\tr(T_{\lambda, \mu}) = \tr(T_{\lambda, \mu}^2)$.  Since $T_{\lambda, \mu} = \mu \Id + N$ with $N$ nilpotent, clearly
\begin{align*}
    (\dim W_{\mu}) \mu = \tr(T_{\lambda, \mu}) = \tr(T_{\lambda, \mu}^2) = (\dim W_{\mu}) \mu^2,
\end{align*}
    \noindent and so $\mu = \mu^2$, which  implies $\mu \in \{0,1\}$. 
\end{proof}
    \indent Just as $\q_{\C}$ is decomposed as a direct sum of pairs of generalized eigenspaces $E_{\lambda} \oplus E_{-\lambda}$, where $\lambda \in \spec(A_{\C})_+$, the operator $\rho(e) = \ad_e \vert_{\q}$ can be decomposed into a direct sum of blocks of the form $P_{\lambda} \oplus Q_{\lambda}$, again with $\lambda \in \spec(A_{\C})_+$. Recall that the trace of an operator is preserved under complexification, as well as $\tr(Q_{\lambda}) = \tr(Q_{\lambda}^{\dagger})$.  
\begin{proposition} \label{prop: el segundo término es nonegativo}
    $\tr(\ad_e \vert_{\q}) \geq 0$. 
\end{proposition}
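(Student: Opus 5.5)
We decompose the trace over the complexification. Since $\rho(e) = \ad_e\vert_{\q}$ commutes with $A$, it preserves each generalized eigenspace $E_\lambda$ of $A_{\C}$. Because the trace of a real operator equals the trace of its complexification, we get
\begin{align*}
    \tr(\ad_e\vert_{\q}) = \sum_{\lambda \in \spec(A_{\C})_+} \bigl(\tr(P_\lambda) + \tr(Q_\lambda)\bigr).
\end{align*}
This uses the fact that $\spec(A_{\C})$ is symmetric under $\lambda \mapsto -\lambda$, with $\dim E_\lambda = \dim E_{-\lambda}$ by Remark \ref{obs: dim ker (alt^2 A) leq m^2}, and that $0\notin\spec(A_{\C})$. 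Consequently every eigenvalue belongs to exactly one pair $\{\lambda,-\lambda\}$ with $\lambda \in \spec(A_{\C})_+$. It therefore suffices to show that each summand $\tr(P_\lambda)+\tr(Q_\lambda)$ is a real number that is nonnegative.

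For a fixed $\lambda$, note that the $\omega_{\q}$-adjoint $Q_\lambda^\dagger$ of $Q_\lambda$, taken with respect to the nondegenerate pairing $E_\lambda\times E_{-\lambda}\to\C$, is an endomorphism of $E_\lambda$ whose trace equals $\tr(Q_\lambda)$. Lemma \ref{lemma: formulas for T, P, and Q} (i) then gives
\begin{align*}
    \tr(P_\lambda)+\tr(Q_\lambda) = \tr(P_\lambda + Q_\lambda^\dagger) = \tr(T_\lambda).
\end{align*}
By Proposition \ref{prop: espectro de T}, $\spec(T_\lambda)\subseteq\{0,1\}$. Hence $\tr(T_\lambda)$ equals the algebraic multiplicity of the eigenvalue $1$, which is a nonnegative integer. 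Summing over $\lambda\in\spec(A_{\C})_+$ yields $\tr(\ad_e\vert_{\q})\geq 0$.

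The main point requiring care is the bookkeeping of complex versus real traces. The individual terms $\tr(P_\lambda)$ could be non-real, but each combined term $\tr(T_\lambda)$ is an integer, so the sum is real and nonnegative without any conjugation argument. We must also confirm that pairing $\lambda$ with $-\lambda$ covers every eigenvalue exactly once, including the purely imaginary ones. This holds by the definition of $\spec(A_{\C})_+$: for $\lambda$ with zero real part we select the element of $\{\lambda,-\lambda\}$ with positive imaginary part, and such an element exists since $\lambda\neq 0$.
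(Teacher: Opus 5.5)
Your proposal is correct and follows essentially the same route as the paper: split $\rho(e)=\ad_e\vert_{\q}$ into blocks $P_\lambda\oplus Q_\lambda$ over $\lambda\in\spec(A_{\C})_+$, replace $\tr(Q_\lambda)$ by $\tr(Q_\lambda^\dagger)$, use $T_\lambda=P_\lambda+Q_\lambda^\dagger$, and conclude from $\spec(T_\lambda)\subseteq\{0,1\}$. Your remarks on real versus complex traces and on each eigenvalue lying in exactly one pair $\{\lambda,-\lambda\}$ make explicit what the paper leaves implicit.
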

\begin{proof}
    Following Proposition \ref{prop: espectro de T}, 
\begin{align*}
    \tr(\ad_e \vert_{\q}) &= \sum_{\spec(A_{\C})_+} \tr(P_{\lambda}) + \tr(Q_{\lambda}) = \sum_{\spec(A_{\C})_+} \tr(P_{\lambda}) + \tr(Q_{\lambda}^{\dagger}) = \sum_{\spec(A_{\C})_+} \tr(T_{\lambda}) \geq 0. \qedhere 
\end{align*}
\end{proof} 
    \indent Our main result follows by tying everything up.  
\begin{theorem} \label{thm: resultado principal}
    If $\t_0 \neq 0$ and $\q \neq 0$ then $\g$ is not transversely unimodular. 
\end{theorem}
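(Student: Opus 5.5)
The idea is to exhibit an explicit vector $x \in \ker(\ad_{\xi}\vert_{\h})$ with $\tr(M_x) \neq 0$. The natural candidate is the vector $e \in \t_0$ constructed in this section from the form $\ell$ via $\ell = -\iota_e \omega$ on $\t_0$. This construction is legitimate because $\q \neq 0$ (so $\ell$ is defined, $A$ being invertible) and $\t_0 \neq 0$ (so $\dim \t \geq 3$, and $\omega_{\t}$ is symplectic on $\t_0$ by Lemma \ref{lemma: simplecticidad} (v)).

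First, Lemma \ref{lemma: se usa luego} gives $[\xi,e] = 0$. Since $e \in \t_0 \subseteq \h$, this places $e$ in $\ker(\ad_{\xi}\vert_{\h})$, so $e$ is an admissible test vector for transverse unimodularity. Second, as observed in Subsection \ref{section: cohomology remarks}, for any $x \in \ker(\ad_{\xi}\vert_{\h})$ the operator $\ad_x$ is block upper triangular with respect to $\g = \R\xi \oplus \h$, with zero in the $\xi$-corner. Hence $\tr(M_e) = \tr(\ad_e)$.

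Third, we use the block-diagonal form of $\ad_e$ with respect to $\g = \t \oplus \q$ (Lemma \ref{lemma: q is a t-module}) and the decomposition \eqref{eq: la estrategia}:
\[
\tr(M_e) = \tr(\ad_e\vert_{\t}) + \tr(\ad_e\vert_{\q}).
\]
By Proposition \ref{prop: el primer término es positivo}, the first term equals $\tfrac{\dim \t_0}{2}$, which is strictly positive since $\t_0 \neq 0$. By Proposition \ref{prop: el segundo término es nonegativo}, the second term is nonnegative. Thus $\tr(M_e) \geq \tfrac{\dim\t_0}{2} > 0$, and $\g$ is not transversely unimodular.

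There is no real obstacle remaining, since the heavy lifting was done in Propositions \ref{prop: el primer término es positivo} and \ref{prop: el segundo término es nonegativo}. The only points to check carefully are, first, that $\ker(\ad_\xi\vert_\h)$ is exactly the set of vectors relevant to the definition of transverse unimodularity, and second, that passing to complexifications in Proposition \ref{prop: el segundo término es nonegativo} does not affect the real trace.
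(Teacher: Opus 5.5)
Your proposal is correct and is essentially the paper's own proof. Lemma~\ref{lemma: se usa luego} gives $[\xi,e]=0$, so $e\in\ker(\ad_{\xi}\vert_{\h})$ and the block form of $\ad_e$ along $\R\xi\oplus\h$ gives $\tr(M_e)=\tr(\ad_e)$; the splitting \eqref{eq: la estrategia} along $\t\oplus\q$, together with $\tr(\ad_e\vert_{\t})=\tfrac{\dim\t_0}{2}$ and $\tr(\ad_e\vert_{\q})\geq 0$ from the two preceding propositions, then yields $\tr(M_e)>0$, with the only difference being that the paper establishes $\tr(\ad_e)>0$ first and checks $e\in\ker(\ad_{\xi}\vert_{\h})$ afterwards.
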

\begin{proof}
    Regarding equation \eqref{eq: la estrategia}, the first term is positive by Proposition \ref{prop: el primer término es positivo} and the second term is nonnegative by Proposition \ref{prop: el segundo término es nonegativo}. Thus, $\tr(\ad_e) > 0$. Since Lemma \ref{lemma: se usa luego} gives $[\xi, e] = 0$, we get $e \in \ker(\ad_{\xi} \vert_{\h})$ and thus
\begin{align*}
    \ad_e = 
    \begin{pmatrix}
            0 & *\\
            0 & M_e
        \end{pmatrix}, \quad M_e := \mathfrak{pr}_{\h} \circ \ad_e \vert_{\h} \in \End(\h)
\end{align*}
    \noindent with respect to the vector space splitting $\g = \R \xi \oplus \h$. Therefore, $\tr(M_e) = \tr(\ad_e) > 0$. 
\end{proof}

    \indent By Proposition \ref{prop: t = R xi means sl(2;R) or su(2)}, the condition $\t_0 \neq 0$ only excludes the contact Lie algebras $\sl(2,\R)$ and $\su(2)$. On the other hand, the condition $\im (K_s) = \q \neq 0$ means that $K = \ad_{\xi}$ has nonzero semisimple part. Thus, Theorem \ref{thm: resultado principal} is equivalent to the fact that transversely unimodular contact Lie algebras not isomorphic to $\sl(2,\R)$ or $\su(2)$ must have nilpotent $\ad_{\xi}$. From both assertions we get the claim in Theorem \ref{thm: main result}. Conversely, it is not difficult to find unimodular contact Lie algebras with nilpotent $\ad_{\xi}$, as the next example shows. 
\begin{example} \label{ex: Diatta-Foreman}
    Let $\g$ be the Lie algebra with basis $\{e_1, e_2, e_3, e_4, e_5\}$ and bracket given by
\begin{gather*}
    [e_1, e_4] = e_1, \quad [e_3, e_4] = - e_3, \quad [e_2, e_5] = e_2, \quad [e_3, e_5] = - e_3.
\end{gather*}
    \noindent Here, $\eta := e^1 + e^2 + e^3$ is the contact form and $\xi := \frac{1}{3} (e_1 + e_2 + e_3)$. A direct computation shows $\ad_{\xi}^2 = 0$. As pointed out in \cite[Proposition 4.4]{DF}, it is solvable and the associated simply connected Lie group has lattices. 
\end{example}   
    \indent  As already discussed in Section \ref{section: cohomology remarks}, Theorem \ref{thm: resultado principal} entails that any contact Lie algebra with both $\t_0 \neq 0$ and $\q \neq 0$ fails to be $0$-Lefschetz in both the sense of \cite{Cagliari} and the sense of \cite{KA}. 

\section{DS-contact Lie algebras} \label{section: DS-contact}

    \indent A \textit{DS-contact} Lie algebra is a contact Lie algebra $(\g,\eta)$ for which the following vector-space decomposition holds:  
\begin{align} \label{eq: DS-contact equation again}
    \g = \ker \ad_{\xi} \oplus \im \ad_{\xi}. 
\end{align}
    \noindent The ``DS"\! bit stands for ``direct sum"\!. The validity of such a decomposition is just a constraint on the Jordan form of $\ad_{\xi}$. There are multiple ways to express the~DS-condition,~namely: 
\begin{multicols}{2}
\begin{itemize}
    \item $ \ker \ad_{\xi} \cap \im \ad_{\xi} = \{0\}$.
    \item $\ker \ad_{\xi}^2 = \ker \ad_{\xi}$.
    \item $K_n \vert_{\t} = 0$.
    \item $C = 0$. That is, $\xi$ is central in $\t_0$. 
    \item The characteristic polynomial of $\ad_{\xi}$ is divisible by $x$ but not by $x^2$.
\end{itemize}    
\end{multicols} 
    \indent As observed in the first lines in Section \ref{section: a useful decomposition}, both summands in the vector space decomposition $\g = \t \oplus \q$ in equation \eqref{eq: the useful decomposition} are $K$-invariant, the equality $K \vert_{\t} = K_n \vert_{\t}$ holds, and $K \vert_{\q}$ is invertible. In particular, $\im K = \im K_s \oplus \im K_n \vert_{\t}$ and $\q = \im \ad_{\xi}$ if and only if $K_n \vert_{\t} = 0$. Thus, the decomposition in equation \eqref{eq: DS-contact equation again} is just
\begin{align*}
    \g = \ker \ad_{\xi} \oplus \im \ad_{\xi} = \t \oplus \q,
\end{align*}
    \noindent without any clash of notation whatsoever. 
    
    \indent Notice that K-contact and Sasakian Lie algebras are DS-contact, as noted in the discussion leading up to equation \eqref{eq: DS-contact equation}. Certainly, there are strict DS-contact Lie algebras; that is, ones not admitting compatible K-contact metrics: as seen from Proposition \ref{prop: 5-dimensional DS-contact Lie algebras} below, the Lie algebra labeled $\g_{1,-}$ is strict DS-contact. We emphasize once more that, in contrast with the K-contact case, the decomposition in equation \eqref{eq: DS-contact equation again} is not assumed to be neither $g$-orthogonal nor $\Phi$-invariant for any compatible metric $g$ or $(1,1)$ tensor $\Phi$, although it still is $\ad_{\xi}$-invariant. 

    \indent It is apparent from the definition that, besides $\sl(2,\R)$ and $\su(2)$, the only DS-contact Lie algebras having nilpotent $\ad_{\xi}$ are those with $\ad_{\xi} = 0$, thus having a nontrivial center. In combination with Theorem \ref{thm: main result}, it follows that centerless unimodular DS-contact Lie algebras are either $\sl(2,\R)$ or $\su(2)$. Since K-contact Lie algebras are DS-contact, we obtain a generalization of Corollary \ref{cor: main corollary}.  

\subsection{Elementary properties of DS-contact Lie algebras} \label{section: elementary properties}

    \indent The main motivation behind the introduction of DS-contact Lie algebras is the observation that a number of known properties of Sasakian Lie algebras are consequences of equation \eqref{eq: DS-contact equation again} alone, not the compatible metric structure nor its good behavior. We see this already with the discussion in the previous paragraph, itself a partial generalization of \cite[Theorem 2.1]{AHK}, and with Proposition \ref{prop: main proposition}, which generalizes \cite[Proposition 8.1]{ACN}. Something similar happens with the proof of the next result. 
\begin{lemma}
    Both $[\t,\t] \subseteq \t$ and $[\t,\q] \subseteq \q$ hold. 
\end{lemma}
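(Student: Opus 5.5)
In the DS-contact setting, $\t = \ker \ad_{\xi}$ and $\q = \im \ad_{\xi}$, so I would argue directly with $\ad_{\xi}$ rather than with $K_s$. The point is that this version of the lemma should follow from the derivation property of $\ad_{\xi}$ alone, with no metric structure. (It is also a special case of Lemma \ref{lemma: q is a t-module}, because in the DS case $\ker K_s = \ker \ad_{\xi}$ and $\im K_s = \im \ad_{\xi}$, as noted just above. I would still give the self-contained argument.)

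For the first inclusion, take $x, y \in \t$, so that $[\xi,x] = [\xi,y] = 0$. The Jacobi identity, in the form of the derivation property of $\ad_{\xi}$, gives
\begin{align*}
    [\xi,[x,y]] = [[\xi,x],y] + [x,[\xi,y]] = 0 .
\end{align*}
Hence $[x,y] \in \ker \ad_{\xi} = \t$.

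For the second inclusion, take $x \in \t$ and $y \in \q$, and write $y = [\xi,z]$ for some $z \in \g$. Applying Jacobi to $(\xi, x, z)$ gives
\begin{align*}
    [x,y] = [x,[\xi,z]] = [\xi,[x,z]] - [[\xi,x],z] = [\xi,[x,z]] .
\end{align*}
Since $[\xi,x] = 0$, the second term drops, so $[x,y] \in \im \ad_{\xi} = \q$.

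I do not expect any real obstacle. The only thing to keep straight is that the DS-condition is not actually needed for either inclusion: they hold for $\ker \ad_{\xi}$ and $\im \ad_{\xi}$ in any Lie algebra. What the DS-condition adds is that these two subspaces together give the splitting $\g = \t \oplus \q$. I would record this as the point of the remark, contrasting it with \cite[Proposition 8 (2)]{AFV}, where the metric structure is used.
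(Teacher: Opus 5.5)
Your proof is correct and is essentially the paper's: the paper notes that both inclusions follow from Lemma \ref{lemma: q is a t-module} (your first computation is that derivation argument with $\ad_{\xi}$ in place of $K_s$), and it reproduces exactly your Jacobi computation on $(\xi,x,z)$ for $[\t,\q]\subseteq\q$. One correction to your framing: the paper presents that Jacobi argument as precisely the proof of \cite[Proposition 8 (2)]{AFV}, which applies here because $\t=\ker\ad_{\xi}$ and $\q=\im\ad_{\xi}$ but does not carry over to the general setting of Lemma \ref{lemma: q is a t-module}, so the relevant contrast is with that general case rather than with a metric-based argument.
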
 
\begin{proof}
    While both claims follow from Lemma \ref{lemma: q is a t-module}, we point out that now the proof of $[\t,\q] \subseteq \q$ in \cite[Proposition 8 (2)]{AFV} also holds in this setting. We reproduce it here: Take $x \in \t$ and $y \in \q$, where $y = T z$ for some $z \in \h$. The Jacobi identity on the triple $(x,\xi,z)$ and the fact that $[x,\xi] = 0$ ensure that
\begin{align*}
    [x,y]_{\g} &= [x, [\xi, z]_{\g}]_{\g} = [[x,\xi]_{\g}, z]_{\g} + [\xi, [x,z]_{\g}]_{\g} = [\xi, [x,z]_{\g}]_{\g} = T([x,z]_{\g}) \in \q. \qedhere 
\end{align*}
\end{proof}
    \indent Just as in Lemma \ref{lemma: simplecticidad}, $\t_0$ and $\q$ are $\omega$-orthogonal, from where it follows that the restrictions $\omega_{\t}$ and $\omega_{\q}$ are symplectic on $\t_0$ and $\q$. In particular, $(\t, \eta _{\t})$ is a contact Lie subalgebra of $(\g, \eta)$; moreover, since $\xi \in \t$, it also has nontrivial center. According to Proposition \ref{prop: contactization}, $(\t, \eta _{\t})$ is the contactization of $(\t_0, \omega_{\t})$, and thus $\alpha$ is a Lie bracket on $(\t_0, \omega_{\t})$. Notice that if further $(\g, \eta)$ is K-contact or Sasakian, so is $(\t, \eta _{\t})$. This is observed in \cite[Proposition 8]{AFV}. 

     \indent The following properties all follow from Lemma \ref{lemma: casi todo jacobi} together with the fact that $C = \ad_{\xi} \vert_{\t}$ is the zero map. For the other identities in the same result we obtain no obvious simplifications in the DS-contact setting. 
\begin{lemma} \label{lemma: casi todo jacobi DS}
    The following identities hold in DS-contact Lie algebras, where $x$, $y$, $z \in \t_0$ and $u$, $v \in \q$: 
\begin{multicols}{2}
\begin{enumerate}[\rm (i)]
    \item $(A^*\beta)(u,v) = 0$. 
    \item $[A, \rho(x)] = 0$.   
    \item $(\rho(x)^* \beta)(u,v) = \alpha(x, \beta(u,v))$.
    \item $\alpha(x, \alpha(y,z)) = 0$. 
\end{enumerate}
\end{multicols} 
\end{lemma}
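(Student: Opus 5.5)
The plan is to specialize Lemma \ref{lemma: casi todo jacobi} to the DS-contact setting, where $C = \ad_{\xi}\vert_{\t} = 0$ (equivalently $\t = \ker\ad_{\xi}$, so $Cx = 0$ for every $x \in \t_0$). Each item should then come from the corresponding general identity by deleting every term containing $C$. No new Jacobi computation is needed, apart from re-running the $(x,y,z)$ triple to read (iv) off directly.

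\textbf{Items (i)--(iii).} For (i), Lemma \ref{lemma: casi todo jacobi} (iv) reads $(A^*\beta)(u,v) = C(\beta(u,v))$. Since $\beta(u,v)\in\t_0\subseteq\t$ and $C$ vanishes on $\t$, the right-hand side is zero. For (ii), Lemma \ref{lemma: casi todo jacobi} (vi) gives $[A,\rho(x)] = \rho(Cx) = \rho(0) = 0$. For (iii), Lemma \ref{lemma: casi todo jacobi} (ix) gives $(\rho(x)^*\beta)(u,v) = \alpha(x,\beta(u,v)) - \omega_{\q}(u,v)\,Cx$, and the last term drops out because $Cx = 0$.

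\textbf{Item (iv).} I would redo the Jacobi computation on the triple $(x,y,z)$ of $\t_0$ from the proof of Lemma \ref{lemma: casi todo jacobi}. In the DS case one has $[x,y] = \omega_{\t}(x,y)\xi + \alpha(x,y)$ with $\xi$ central in $\t$. Bracketing with $x$, the $\xi$-part of $[y,z]$ therefore contributes nothing to $[x,[y,z]]$, so
\begin{align*}
    [x,[y,z]] = [x,\alpha(y,z)] = \omega_{\t}(x,\alpha(y,z))\xi + \alpha(x,\alpha(y,z)).
\end{align*}
Thus the $\t_0$-component of $[x,[y,z]]$ is exactly $\alpha(x,\alpha(y,z))$, with no $C$-correction terms. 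Comparing with Lemma \ref{lemma: casi todo jacobi} (xiv), the correction $\mathfrak{S}_{x,y,z}\,\omega_{\t}(x,y)Cz$ vanishes identically. Hence the $\t_0$-part of the Jacobi identity becomes the identity in (iv), which is the statement that $\alpha$ is a genuine Lie bracket on $\t_0$, consistent with Corollary \ref{cor: when is alpha a lie bracket} for $C=0$.

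\textbf{Main obstacle.} Items (i)--(iii) are immediate substitutions. The delicate point is (iv). The computation above shows that the iterated bracket $[x,[y,z]]$ has no $\xi$-feedback into $\t_0$, but it only directly controls the cyclic sum of the terms $\alpha(x,\alpha(y,z))$, through Lemma \ref{lemma: casi todo jacobi} (xiv). To pass to the individual terms as the statement is worded, I would first try to exploit the centrality of $\xi$ in $\t$ together with (i)--(iii) and Lemma \ref{lemma: casi todo jacobi} (xi). I flag that this passage from the cyclic identity to termwise vanishing is the step that would need to be checked with the most care.
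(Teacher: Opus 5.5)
\textbf{Items (i)--(iii).} These are fine and match the paper. The paper simply specializes Lemma~\ref{lemma: casi todo jacobi} (iv), (vi), (ix) to $C=0$, which is what you do.

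\textbf{Item (iv).} Here there is a real problem: the step you flag as the main obstacle cannot be carried out, because the termwise identity $\alpha(x,\alpha(y,z))=0$ for all $x,y,z\in\t_0$ is false. Take $\g_{0,+}$ from Proposition~\ref{prop: 5-dimensional DS-contact Lie algebras}. There $\t_0=\Span\{x,y\}$ and $[x,y]=\xi+y$, so $\alpha(x,y)=y$ and $\alpha(x,\alpha(x,y))=y\neq 0$. The failure is not special to this example. Termwise vanishing would make $(\t_0,\alpha)$ two-step nilpotent, hence unimodular, which contradicts Corollary~\ref{cor: t_0 is not unimodular} whenever $\t_0\neq 0$ and $\q\neq 0$. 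So centrality of $\xi$, items (i)--(iii) and Lemma~\ref{lemma: casi todo jacobi} (xi) cannot produce it, in any combination. Likewise, your sentence ``the $\t_0$-part of the Jacobi identity becomes the identity in (iv)'' is correct only for the cyclic sum.

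\textbf{How to fix it.} Read (iv) as the paper intends; the printed statement has lost its cyclic sum. The remark right after the lemma says (iv) ``is just the statement that $\alpha$ is a Lie bracket on $\t_0$'', i.e.\ $\mathfrak{S}_{x,y,z}\,\alpha(x,\alpha(y,z))=0$. Your computation on the triple $(x,y,z)$ proves exactly this. The same conclusion also follows from Lemma~\ref{lemma: casi todo jacobi} (xiv) with $C=0$, or from Proposition~\ref{prop: contactization} applied to $(\t,\eta_{\t})$, where $\xi$ is central. With that reading your proof is complete and coincides with the paper's; drop the attempted upgrade to termwise vanishing.
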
 
    \indent Notice that Lemma \ref{lemma: casi todo jacobi DS} (iv) is just the statement that $\alpha$ is a Lie bracket on $\t_0$, which was observed a couple of lines above. While the statement in Lemma \ref{lemma: casi todo jacobi} (xi) remains unchanged in the DS-setting, its meaning does not: it says that $\omega_{\t}$ is closed with respect to $\alpha$, and thus a symplectic form. Theorem \ref{thm: t_0 is frobenius} below ensures that it is exact.    

    \indent Recall from Corollary \ref{cor: b and b-omega} that $\b := \im \beta$ and $\b^{\omega} := \b^{\bot_{\omega_{\t}}}$. We know from Corollary \ref{cor: beta neq when dim q geq 4} that $\b \neq 0$ when $\dim \q \geq 4$. The spaces $\b$ and $\b^{\omega}$ possess a number of extra properties in the DS-contact setting. 
\begin{corollary} \label{cor: some obstructions}
    \phantom{.}
\begin{enumerate} [\rm (i)]
    \item $\b$ is an ideal in $(\t_0, \alpha)$ of dimension $\dim \b \leq \left( \frac{\dim \q}{2} \right)^2$.
    \item $\b^{\omega}$ is a Lie subalgebra of $(\t_0, \alpha)$ of codimension $\mathrm{codim} \, \b^{\omega} \leq \left( \frac{\dim \q}{2} \right)^2$. 
    \item $\rho(\b^{\omega})$ lies in the centralizer of $A$ in $\sp(\q, \omega_{\q})$. 
    \item $\k := \b \cap \b^{\omega}$ is an ideal of $\b^{\omega}$ and $\omega_{\t}$ descends to a symplectic form on $\b^{\omega}/\k$.
    \item The $1$-form $\mu \in \t_0^*$ given by $\mu(x) := \tr(\rho(x))$ is closed and $\b^{\omega} \subseteq \ker \mu$. 
\end{enumerate}
\end{corollary}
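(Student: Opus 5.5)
Each item is handled with the DS identities of Lemma \ref{lemma: casi todo jacobi DS} ($C=0$, $[A,\rho(x)]=0$, $\rho(x)^*\beta = \alpha(x,\beta)$, $\alpha$ a Lie bracket), together with Corollary \ref{cor: b and b-omega} and the dimension count of Remark \ref{obs: dim ker (alt^2 A) leq m^2}.

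For (i), Lemma \ref{lemma: casi todo jacobi DS} (iii) reads $\alpha(x,\beta(u,v)) = \beta(\rho(x)u,v)+\beta(u,\rho(x)v) \in \b$, so $\b$ is an ideal of $(\t_0,\alpha)$. For the dimension bound, note that $\beta$ is a linear map $\alt^2\q \to \t_0$. Lemma \ref{lemma: casi todo jacobi DS} (i) says $\beta\circ \alt^2 A = 0$, so $\beta$ factors through $\operatorname{coker}(\alt^2 A)$. Hence $\dim \b \le \dim\operatorname{coker}(\alt^2 A) = \dim\ker(\alt^2 A) \le (\dim\q/2)^2$ by Remark \ref{obs: dim ker (alt^2 A) leq m^2}; the middle equality is rank–nullity for an endomorphism of $\alt^2\q$.

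For (ii), the codimension claim follows at once from the nondegeneracy of $\omega_{\t}$, which gives $\operatorname{codim}\b^{\omega} = \dim\b$. To see that $\b^{\omega}$ is a subalgebra, use that $\omega_{\t}$ is $\alpha$-closed (Lemma \ref{lemma: casi todo jacobi} (xi)). For $x,y\in\b^{\omega}$ and $b\in\b$, closedness gives $\omega_{\t}(\alpha(x,y),b) = -\omega_{\t}(\alpha(y,b),x) - \omega_{\t}(\alpha(b,x),y)$. Both terms on the right vanish because $\alpha(y,b),\alpha(b,x)\in\b$ by (i). The same computation shows $\b^{\omega}$ is a subalgebra whenever $\b$ is an ideal.

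Item (iii) combines Corollary \ref{cor: b and b-omega} (iii), which gives $\rho(\b^{\omega})\subseteq \sp(\q,\omega_{\q})$, with Lemma \ref{lemma: casi todo jacobi DS} (ii), which gives that each $\rho(x)$ commutes with $A$.

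For (iv), $\k=\b\cap\b^{\omega}$ is an ideal of $\b^{\omega}$ because $\b$ is an ideal of $\t_0$ and $\b^{\omega}$ is a subalgebra. Moreover $\k$ is exactly the radical of $\omega_{\t}|_{\b^{\omega}}$, since $(\b^{\omega})^{\omega}=\b$. Therefore $\omega_{\t}$ descends to a nondegenerate form on $\b^{\omega}/\k$. It is closed there, because closedness is inherited from restriction to a subalgebra and the form is basic with respect to the ideal $\k$. That basicness is the one point needing care: I would verify that $\iota_k\omega_{\t}=0$ on $\b^{\omega}$ and $\mathcal{L}_k\omega_{\t}=d\iota_k\omega_{\t}=0$ there, so the form passes to the quotient Lie algebra.

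For (v), take traces in Lemma \ref{lemma: casi todo jacobi} (vii) with $C=0$:
\begin{align*}
0=\tr[\rho(x),\rho(y)] = \tr\rho(\alpha(x,y)) + \omega_{\t}(x,y)\tr A .
\end{align*}
Since $A\in\sp(\q,\omega_{\q})$, we have $\tr A=0$, and thus $\mu(\alpha(x,y))=0$, i.e.\ $d\mu=0$ for the bracket $\alpha$. The inclusion $\b^{\omega}\subseteq\ker\mu$ is Corollary \ref{cor: b and b-omega} (iv). I expect the main subtlety to be the dimension bound in (i), namely being careful that $\beta$ kills $\im(\alt^2A)$ and invoking the Remark correctly; the remaining items are essentially formal.
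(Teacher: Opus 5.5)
Your proposal is correct and follows the paper's proof item by item. The ingredients match in each case: Lemma \ref{lemma: casi todo jacobi DS} (iii) and (i) together with Remark \ref{obs: dim ker (alt^2 A) leq m^2} for (i); the $\alpha$-closedness of $\omega_{\t}$ for (ii); Lemma \ref{lemma: casi todo jacobi DS} (ii) with Corollary \ref{cor: b and b-omega} (iii) for (iii); the identification of the radical of $\omega_{\t}\vert_{\b^{\omega}}$ with $\b^{\omega}\cap\b$ for (iv); and the trace of Lemma \ref{lemma: casi todo jacobi} (vii) for (v).

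In (iv) you prove the stated claim, that $\k$ is an ideal of $\b^{\omega}$; the paper's text says ``ideal in $\b$'', which is a slip. Your closedness check there is also simpler than you make it: only $\iota_k\omega_{\t}=0$ for $k\in\k$ is needed. Pullback along the quotient homomorphism $\b^{\omega}\to\b^{\omega}/\k$ is injective and commutes with $d$, so the descended form is closed.
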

\begin{proof} \phantom{.}
\begin{enumerate} [\rm (i)]
    \item Lemma \ref{lemma: casi todo jacobi DS} (iv) ensures $(\t_0, \alpha)$ is a Lie algebra, and Lemma \ref{lemma: casi todo jacobi DS} (iii) ensures $\b$ is an ideal in it. Lemma \ref{lemma: casi todo jacobi DS} (i) implies that $\beta$ vanishes on the image of the induced map $\alt^2 A$; from here, the rank-nullity theorem and Remark \ref{obs: dim ker (alt^2 A) leq m^2} imply
\begin{align*}
    \dim \b = \dim \im \beta \leq \dim \ker (\alt^2 A) \leq \left( \frac{\dim \q}{2} \right)^2. 
\end{align*} 
    \item Let $x$, $y \in \b^{\omega}$ and $z \in \b$. According to (i), both $[y,z] \in \b$ and $[z,x] \in \b$. Since $\omega_{\t}$ is closed, 
\begin{gather*}
    0 = - (d_{\t_0} \omega_{\t}) (x,y,z) = \omega_{\t}(\alpha(x,y),z) + \omega_{\t}(\alpha(y,z),x) + \omega_{\t}(\alpha(z,x),y) = \omega_{\t}(\alpha(x,y),z)
\end{gather*}
    \noindent implying $\alpha(x,y) \in \b^{\omega}$. Thus $\b^{\omega}$ is a Lie subalgebra of $\t_0$. The bound for $\mathrm{codim} \, \b^{\omega}$ follows from the nondegeneracy of $\omega_{\t}$ and (i).
    \item The first assertion follows from Lemma \ref{lemma: casi todo jacobi DS} (ii), while the second assertion arises from here and Corollary \ref{cor: b and b-omega} (iii). 
    \item $\k$ is an ideal in $\b$ because $\b$ is an ideal in $\t_0$ and $\b^{\omega}$. On the other hand, $\omega_{\t}$ descends to a nondegenerate closed $2$-form on $\b^{\omega}/\k$ because $\ker (\omega_{\t} \vert_{\b^{\omega}} ) = \b^{\omega} \cap \b = \k$. 
    \item Lemma \ref{lemma: casi todo jacobi} (ii) ensures that $A \in \sp(\q, \omega_{\q})$, and in particular $\tr(A) = 0$, and Lemma \ref{lemma: casi todo jacobi} (vii) ensures that $[\rho(x),\rho(y)] = \rho(\alpha(x,y))+ \omega_{\t}(x,y) A$ for all $x$, $y \in \t_0$. Hence,  
\begin{align*}
    (d_{\t_0} \mu)(x,y) = - \mu(\alpha(x,y)) = \tr( \rho(\alpha(x,y) ) = \tr( [\rho(x), \rho(y)]) - \omega_{\t}(x,y) \tr(A) = 0,
\end{align*}
    \noindent for all $x$, $y \in \t_0$. The inclusion $\b^{\omega} \subseteq \ker \mu$ follows from Corollary \ref{cor: b and b-omega} (iv). \qedhere 
\end{enumerate}
\end{proof} 

\subsection{On Frobenius Lie algebras} \label{section: on frobenius lie algebras}

    \indent A Frobenius Lie algebra is just a Lie algebra equipped with an exact symplectic form. It is known that Frobenius Lie algebras are nonunimodular (see \cite[Proposition 3.4]{DM}, although an adaptation of the proof of Proposition \ref{prop: el primer término es positivo} shows the same result).  

    \indent It turns out that the $\t_0$-part inside a DS-contact Lie algebra is a Frobenius Lie algebra with bracket $\alpha$. This generalizes the first half of \cite[Proposition 4.1]{ACN}, where the same result is established for K-contact Lie algebras with $\dim \q = 2$. Recall $\ell \in \g^*$ and $\sigma \in \alt^2 \g^*$ from equations \eqref{eq: ell form} and \eqref{eq: sigma form}, and note that the differentials $d_{\t}$ and $d_{\t_0}$ can be taken as equal upon the identification $\alt_{\xi}^* \t \cong \alt^* \t_0^*$ (see Remark~\ref{obs: when are dg and dh equal}). Denote also by the same name the restrictions of $\ell$ and $\sigma$ to $\t_0$. 
\begin{theorem} \label{thm: t_0 is frobenius}
    $(\t_0, \omega_{\t})$ is a Frobenius Lie algebra. In fact, $\omega_{\t} = d_{\t_0} \ell$. 
\end{theorem}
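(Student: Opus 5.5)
Everything needed is already in Proposition \ref{prop: propiedades de ell} (ii), which gives $d_{\t}\ell = \omega_{\t} - \sigma$ on $\t$. The plan is to show that $\sigma$ vanishes identically on $\t_0$ in the DS-contact setting, and then to reinterpret $d_{\t}$ as $d_{\t_0}$.

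\textbf{Step 1: $\sigma=0$ on $\t_0$.} In the DS-contact case $C=\ad_\xi\vert_{\t}=0$, so every $z\in\t_0$ lies in $\ker C$. Lemma \ref{lemma: se usa luego} then gives $\iota_z\sigma=0$ for all such $z$, hence $\sigma\vert_{\t_0}=0$.

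This can also be seen directly. Lemma \ref{lemma: casi todo jacobi DS} (ii) says $[A,\rho(x)]=0$, so
\begin{align*}
    A^{-1}[\rho(x),\rho(y)]=[A^{-1}\rho(x),\rho(y)],
\end{align*}
which is traceless. Therefore $d_{\t}\ell=\omega_{\t}$ on $\t_0$. On pairs $(\xi,x)$ both sides vanish: $\ell(Cx)=0$ trivially, and $\omega_{\t}(\xi,\cdot)=0$ by definition.

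\textbf{Step 2: pass from $d_{\t}$ to $d_{\t_0}$.} Since $(\t,\eta_{\t})$ is the contactization of $(\t_0,\alpha)$ (Proposition \ref{prop: contactization}), Remark \ref{obs: when are dg and dh equal} identifies $(\alt^*\t_0^*,d_{\t_0})$ with $(\alt_\xi^*\t^*,d_{\t})$. The restriction of $\ell$ to $\t_0$ corresponds to $\ell$ itself, because $\ell(\xi)=0$.

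Concretely, for $x,y\in\t_0$ we have $[x,y]=\omega_{\t}(x,y)\xi+\alpha(x,y)$, so
\begin{align*}
    d_{\t_0}\ell(x,y)=-\ell(\alpha(x,y))=-\ell([x,y])=d_{\t}\ell(x,y)=\omega_{\t}(x,y).
\end{align*}

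\textbf{Step 3: conclude.} By Lemma \ref{lemma: simplecticidad} (v), $\omega_{\t}$ is nondegenerate on $\t_0$. It is exact, being $d_{\t_0}\ell$, so $(\t_0,\alpha)$ with $\omega_{\t}$ is Frobenius.

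There is essentially no obstacle here. The only point needing care is checking that Lemma \ref{lemma: se usa luego} applies to all of $\t_0$, which holds precisely because $C=0$, and keeping the sign conventions of $d$ consistent between $\t$ and $\t_0$.

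The degenerate cases $\q=0$ (which gives $\ad_\xi=0$) and $\t_0=0$ must be excluded. When $\q=0$, $\ell$ is not even defined, so the statement implicitly assumes $\q\neq0$, matching the standing assumption of Section \ref{section: proof of the main result}. When $\t_0=0$ the statement is vacuous.
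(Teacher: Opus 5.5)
Your proof is correct and is essentially the paper's own argument: Proposition~\ref{prop: propiedades de ell}~(ii) gives $d_{\t}\ell = \omega_{\t} - \sigma$; Lemma~\ref{lemma: se usa luego}, applied with $C = 0$, forces $\sigma = 0$; and $\iota_{\xi}\ell = 0$ lets you replace $d_{\t}$ by $d_{\t_0}$. You are also right that the standing assumption $\q \neq 0$ is implicitly needed, since otherwise $\ell$ is not defined.
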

\begin{proof}
    Proposition \ref{prop: propiedades de ell} (ii) shows $d_{\t} \ell = \omega_{\t} - \sigma$ on $\t$. As already observed, $d_{\t} \ell = d_{\t_0} \ell$ because $\iota_{\xi} \ell = 0$; on the other hand, since $C = 0$, Lemma \ref{lemma: se usa luego} forces $\sigma = 0$.    
\end{proof}
    \indent The other half of \cite[Proposition 4.1]{ACN} is harder to generalize to the DS-contact setting. A reason for that is the presence of the vector $H_0$, which should be replaced by a more complicated object. We note, however, that the $1$-form $\mu$ in Corollary \ref{cor: some obstructions} is similar to the one used in said result. 
\begin{corollary} \label{cor: t_0 is not unimodular}
    Neither $\t_0$ nor $\t$ is unimodular. 
\end{corollary}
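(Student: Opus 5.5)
The plan is to obtain both assertions directly from Theorem \ref{thm: t_0 is frobenius} and the computations already made in Section \ref{section: proof of the main result}. The non-unimodularity of $\t$ needs no new work. Proposition \ref{prop: el primer término es positivo} was proved under the standing assumptions $\q\neq 0$ and $\dim\t\geq 3$, with no DS-hypothesis. It gives $\tr(\ad_e\vert_{\t}) = \frac{\dim\t_0}{2}>0$ for the vector $e\in\t_0$ defined by $\ell = -\iota_e\omega_{\t}$. Since $e\in\t$ and $\t$ is a subalgebra, $\ad_e\vert_{\t}$ is the adjoint operator of $e$ inside $\t$. Hence $\t$ is not unimodular.

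For $\t_0$ with bracket $\alpha$, I would rerun the argument of Proposition \ref{prop: el primer término es positivo} intrinsically in $\t_0$, where it simplifies. By Theorem \ref{thm: t_0 is frobenius}, $\omega_{\t} = d_{\t_0}\ell$ and $\ell = -\iota_e\omega_{\t}$. Cartan's formula in the Chevalley--Eilenberg complex of $(\t_0,\alpha)$, together with $d_{\t_0}\omega_{\t}=0$ (which is Lemma \ref{lemma: casi todo jacobi} (xi)), gives
\begin{align*}
\mathcal{L}_e\omega_{\t} = d_{\t_0}\iota_e\omega_{\t} = -d_{\t_0}\ell = -\omega_{\t}.
\end{align*}
Write $\mathcal{L}_e\omega_{\t}(x,y) = -\omega_{\t}(\ad^{\t_0}_e x,y)-\omega_{\t}(x,\ad^{\t_0}_e y)$. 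The identity above then says that $\ad^{\t_0}_e - \frac12\Id$ lies in $\sp(\t_0,\omega_{\t})$, so it is traceless. Therefore $\tr(\ad^{\t_0}_e) = \frac{\dim\t_0}{2}>0$, and $\t_0$ is not unimodular. This is just the standard fact that Frobenius Lie algebras are nonunimodular \cite[Proposition 3.4]{DM}, which could also simply be cited.

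As a consistency check, I would compare with the $\t$-computation. Because $C=0$ in the DS setting, $\t$ is the contactization of $(\t_0,\omega_{\t})$. So Remark \ref{obs: iff unimod, nil, solv} also yields the equivalence ``$\t$ unimodular $\iff$ $\t_0$ unimodular'', and either half implies the other. There is no real obstacle. The only point needing care is that the Lie derivative and the differential are computed in $(\t_0,\alpha)$ rather than in $\g$; this is legitimate by the identification $d_{\t}=d_{\t_0}$ on $\xi$-basic forms noted before Theorem \ref{thm: t_0 is frobenius}, using Remark \ref{obs: when are dg and dh equal}.
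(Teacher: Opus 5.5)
Your proof is correct. For $\t_0$ it matches the paper, which simply combines Theorem \ref{thm: t_0 is frobenius} with the fact that Frobenius Lie algebras are nonunimodular. You write out that standard trace argument, and it is exactly the proof of Proposition \ref{prop: el primer término es positivo} in the case $\sigma = 0$: there $M = \mathrm{pr}_{\t_0}\circ \ad_e\vert_{\t_0}$ is literally $\alpha(e,\cdot)$ and $N=0$.

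For $\t$ the routes differ. The paper deduces it from the $\t_0$ statement via the contactization Remark \ref{obs: iff unimod, nil, solv}. You read it off directly from Proposition \ref{prop: el primer término es positivo}, which never uses the DS hypothesis. Your route therefore shows that $\t$ is nonunimodular for every contact Lie algebra with $\t_0 \neq 0$ and $\q \neq 0$. The paper's route instead makes clear that, once $C=0$, the two halves of the statement are equivalent.

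You were right to track the standing hypotheses $\q \neq 0$ and $\dim \t \geq 3$. Without $\q \neq 0$ the statement fails: the Heisenberg algebra is trivially DS-contact, with $\t = \g$ and $\t_0$ abelian.
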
  
\begin{proof}
    $\t_0$ is not unimodular because it has an exact symplectic form, due to Theorem \ref{thm: t_0 is frobenius}. $\t$ is not unimodular because it is the contactization of $\t$ via $\omega_{\t}$, due to Remark \ref{obs: iff unimod, nil, solv}. 
\end{proof}    
    \indent Theorem \ref{thm: t_0 is frobenius} comes with an associated \textit{realization problem}: which Frobenius Lie algebras can be realized as the $\t_0$-part of some DS-contact Lie algebra? Corollary \ref{cor: some obstructions} can be regarded as a list of obstructions for this to happen, at least when $\b \neq 0$. We explore this question in the next few results. To that aim, we set, for a given Frobenius Lie algebra $(\a, \omega_{\a})$, 
\begin{align*} 
    \mathcal{R}(\a, \omega_{\a}) := \{d \in 2 \mathbb N \mid (\a, \omega_{\a}) \text{ is realizable with $\dim \q = d$} \}. 
\end{align*}
    \noindent We omit the clarification that the realization is ``as the $\t_0$-part of a DS-contact Lie algebra" to~save~space. We start with two positive results.     
\begin{proposition} \label{prop: realizable 1}
    $2\in \mathcal{R}(\a, \omega_{\a})$ for any Frobenius Lie algebra $(\a, \omega_{\a})$. 
\end{proposition}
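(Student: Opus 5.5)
The plan is to realize $(\a,\omega_{\a})$ explicitly, guided by the structure equations of Lemma~\ref{lemma: casi todo jacobi} and Lemma~\ref{lemma: casi todo jacobi DS} specialized to $\dim\q=2$.

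Fix a primitive $\lambda\in\a^*$ with $\omega_{\a}=d_{\a}\lambda$, i.e.\ $\omega_{\a}(x,y)=-\lambda([x,y]_{\a})$. On the vector space $\g:=\R\xi\oplus\a\oplus\q$, with $\q=\Span\{u,v\}$, I will declare the following brackets:
\begin{gather*}
[x,y]:=\omega_{\a}(x,y)\xi+[x,y]_{\a},\quad [\xi,x]=0 \quad (x,y\in\a),\\
[\xi,u]=u,\quad [\xi,v]=-v,\quad [u,v]=\xi,\\
[x,u]=\lambda(x)u,\quad [x,v]=-\lambda(x)v \quad (x\in\a).
\end{gather*}
These choices mean the following in the language of Section~\ref{section: a useful decomposition}:
\begin{itemize}
\item $\t=\R\xi\oplus\a$ is the contactization of $(\a,\omega_{\a})$, so $C=0$ and $\alpha=[\cdot,\cdot]_{\a}$.
\item $A=\diag(1,-1)$.
\item $\beta=\gamma=0$, consistent with Corollary~\ref{cor: gamma = 0 when dim q = 2} and Corollary~\ref{cor: beta neq when dim q geq 4}.
\item $\rho(x)=\lambda(x)A$, which commutes with $A$, as Lemma~\ref{lemma: casi todo jacobi DS}~(ii) demands.
\end{itemize}

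The main step is the Jacobi identity. Its $\t$-part holds because $\t$ is a contactization (Proposition~\ref{prop: contactization}). Since $\dim\q=2$, every triple with two entries in $\q$ and one in $\q$ is degenerate. So it suffices to check the triples of types $(\xi,x,u)$, $(x,y,u)$, $(\xi,u,v)$ and $(x,u,v)$, which amounts to checking Lemma~\ref{lemma: casi todo jacobi}~(vi)--(ix).
\begin{itemize}
\item Item (vi) holds since $\rho(x)$ is a multiple of $A$ and $C=0$.
\item Item (vii) requires $0=\rho([x,y]_{\a})+\omega_{\a}(x,y)A=\bigl(\lambda([x,y]_{\a})+\omega_{\a}(x,y)\bigr)A$. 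This is exactly $\omega_{\a}=d_{\a}\lambda$, and it is the one place where the Frobenius hypothesis enters.
\item Item (viii) holds because $\tr\rho(x)=0$: we have $(\rho(x)^*\omega_{\q})(u,v)=\lambda(x)\bigl(\omega_{\q}(u,v)-\omega_{\q}(u,v)\bigr)=0=\omega_{\a}(x,\beta(u,v))$.
\item Items (ix) and (iv) hold trivially, since $\beta=0$ and $C=0$.
\end{itemize}
To be safe, I will also verify $(\xi,u,v)$ and $(x,u,v)$ directly from the brackets: $[\xi,[u,v]]=0=[u,v]-[u,v]$ and $[x,[u,v]]=0=\lambda(x)[u,v]-\lambda(x)[u,v]$.

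Finally, I will check the contact and DS properties. Let $\eta:=\xi^*$, vanishing on $\a\oplus\q$. Then $-d\eta$ restricts on $\ker\eta=\a\oplus\q$ to $\omega_{\a}+\omega_{\q}$ with $\omega_{\q}(u,v)=1$, and these two pieces are $\omega$-orthogonal since $\eta([\a,\q])=0$. This form is nondegenerate, so $\eta$ is contact. Moreover $\iota_\xi d\eta=0$ because $\im\ad_\xi\subseteq\q\subseteq\ker\eta$, so $\xi$ is the Reeb vector. Next, $\ker\ad_\xi=\R\xi\oplus\a$ and $\im\ad_\xi=\q$, so $\g=\ker\ad_\xi\oplus\im\ad_\xi$ and $\g$ is DS-contact. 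Its $\t_0$-part is $\a$ with bracket $\alpha=[\cdot,\cdot]_{\a}$ and form $\omega_{\t}=\omega_{\a}$. Hence $(\a,\omega_{\a})$ is realized with $\dim\q=2$, which gives $2\in\mathcal R(\a,\omega_{\a})$.

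I expect no serious obstacle beyond arranging the Jacobi check efficiently. The only conceptual point is recognizing that identity (vii) forces $\rho$ to be a primitive of $\omega_{\a}$ tensored with $A$.
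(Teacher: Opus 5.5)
Your construction is correct. It is in fact the paper's construction written in coordinates adapted to $\ker\eta$.

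The paper works on $\R\xi\oplus\a\oplus\q$ with:
\begin{itemize}
\item $[\a,\q]=0$ and $[\xi,\a]=0$;
\item $[\xi,u]=Au$ for an arbitrary invertible $A\in\sp(\q,\omega_{\q})$;
\item $[u,v]=\omega_{\q}(u,v)\xi$ and $[x,y]=[x,y]_{\a}$.
\end{itemize}
This is a direct sum of Lie algebras, $\a\oplus(\R\xi\oplus\q)$, the second factor being $\sl(2,\R)$ or $\su(2)$. The Frobenius primitive (your $\lambda$) is placed in the contact form, $\eta=\xi^*-\lambda$.

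Now substitute $x':=x+\lambda(x)\xi$ for $x\in\a$. Then $[x',y']=\omega_{\a}(x,y)\xi+([x,y]_{\a})'$ and $[x',u]=\lambda(x)Au$, while $\eta(x')=0$. So $\eta$ becomes the dual of $\xi$ in the new basis. With $A=\diag(1,-1)$ this is exactly your algebra, so the two routes differ only in where the primitive lives.

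The paper's presentation gets the Jacobi identity almost for free from the direct-sum structure. The price is that $\t_0=\t\cap\ker\eta$ is the graph $\{x+\lambda(x)\xi\}$ rather than $\a$ itself. The identification $(\t_0,\alpha,\omega_{\t})\cong(\a,\omega_{\a})$ is then left implicit; it follows from $\t/\R\xi\cong\a$.

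Your presentation makes that identification literal: $\ker\eta=\a\oplus\q$, $\t_0=\a$, and $\omega_{\t}=\omega_{\a}$. The cost is a hand check of Jacobi. As you observe, the exactness $\omega_{\a}=d_{\a}\lambda$ enters precisely through identity (vii) of Lemma~\ref{lemma: casi todo jacobi}, which makes the role of the Frobenius hypothesis more transparent.

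One slip: the sentence about degenerate triples should read ``every triple with all three entries in $\q$''. The Jacobiator is alternating and trilinear, so it vanishes on $\q\times\q\times\q$ when $\dim\q=2$.
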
 
\begin{proof}
    Pick $\alpha \in \a^*$ such that $d_{\a} \alpha = \omega_{\a}$, a $2$-dimensional vector space $(\q,\omega_{\q})$, and any invertible $A\in \sp(\q,\omega_{\q})$. On the vector space $\g$ given by $\R\xi \oplus \a \oplus \q$, let
\begin{align*}
    [\xi,\a]_{\g} = 0, \quad  [\xi,u]_{\g} = Au, \quad  [\a,\q]_{\g} = 0, \quad  [x,y]_{\g} =[x,y]_{\a}, \quad [u,v]_{\g} = \omega_{\q}(u,v)\,\xi.
\end{align*}
    \noindent define a skew-symmetric bilinear product on $\g$. A straightforward computation shows that $[\cdot, \cdot]_{\g}$ is indeed a Lie bracket, a fact that can be read off from Lemma \ref{lemma: casi todo jacobi} and Lemma \ref{lemma: casi todo jacobi DS}: we stress the crucial fact that the Jacobi identity on triples $(u,v,w)$ holds trivially because $\dim \q = 2$. Also, let $\theta \in \g^*$ be dual to $\xi$, extend $\alpha$ by $0$ on $\R\xi \oplus \q$, and set $\eta := \theta - \alpha$. Since $-(d_{\g}\eta) \vert_{\a} = \omega_{\a}$ and $-(d_{\g}\eta) \vert_{\q}=\omega_{\q}$, it follows that $\eta$ is contact with Reeb vector $\xi$. Finally, $\ad_{\xi}$ vanishes on $\R\xi \oplus \a$ and equals $A$ on $\q$, hence $\ker \ad_{\xi}=\R\xi \oplus \a$ and $\im \ad_{\xi}=\q$.  
\end{proof} 
    \indent It is not hard to see that $\mathcal{R}(\mathfrak{aff}(\R)) = 2 \N$ for the non-abelian $2$-dimensional Lie algebra $\mathfrak{aff}(\R)$. A more general result can be established with the same effort. 
\begin{proposition}\label{prop: realizable 2}
    $\mathcal{R}(\a, \omega_{\a}) = 2\mathbb N$ for any Frobenius Lie algebra $(\a, \omega_{\a})$ with~a~one-dimensional ideal. 
\end{proposition}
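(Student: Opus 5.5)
The plan is to generalize the construction in the proof of Proposition~\ref{prop: realizable 1}. Fix $k\geq 1$ and a symplectic vector space $(\q,\omega_{\q})$ of dimension $2k$. On $\g:=\R\xi\oplus\a\oplus\q$ I would declare $\xi$ central in $\t:=\R\xi\oplus\a$, with $\t$ the contactization of $(\a,\omega_{\a})$, and take $[\xi,u]=Au$ for an invertible $A\in\sp(\q,\omega_{\q})$. By Corollary~\ref{cor: beta neq when dim q geq 4} the map $\beta$ cannot vanish when $k\geq2$, so the one-dimensional ideal $\R z\subseteq\a$ will serve as the image of $\beta$. Write $[z,x]_{\a}=\lambda(x)z$ and fix a primitive $f\in\a^*$ with $d_{\a}f=\omega_{\a}$. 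The ansatz is
\begin{gather*}
    \rho(x)=a(x)A+g(x)\Id_{\q},\quad \beta(u,v)=c\,\omega_{\q}(u,v)z,\quad \gamma=0,
\end{gather*}
for linear forms $a,g\in\a^*$ and a scalar $c$ to be determined. Thus the only new brackets are $[x,u]=\rho(x)u$ and $[u,v]=\omega_{\q}(u,v)(\xi+cz)$.

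Next I would impose the identities of Lemma~\ref{lemma: casi todo jacobi} and Lemma~\ref{lemma: casi todo jacobi DS} one by one, since they are exactly the Jacobi identity in this setting.
\begin{itemize}
    \item Items (ii), (v) and (vi) hold automatically, because $\rho(x)$ commutes with $A$ and $A$ is symplectic.
    \item Item (vii) reduces to $0=\rho([x,y])+\omega_{\a}(x,y)A$. This forces $g|_{[\a,\a]}=0$, i.e.\ $dg=0$, and $a([x,y])=-\omega_{\a}(x,y)$. The latter holds for $a:=f$, since $d_{\a}f(x,y)=-f([x,y])$ and $d_{\a}f=\omega_{\a}$.
    \item Item (viii) reads $2g(x)=c\,\omega_{\a}(x,z)$.
    \item Item (ix) reads $2g(x)\beta=\alpha(x,\beta)$, that is, $2g(x)=-\lambda(x)$.
    \item Item (xv) reads $\mathfrak S\,\omega_{\q}(u,v)\bigl(Aw+c\rho(z)w\bigr)=0$.
\end{itemize}

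The compatibility of these conditions is where the hypothesis enters. From $\omega_{\a}=d_{\a}f$ one gets $\omega_{\a}(x,z)=-f([x,z])=\lambda(x)f(z)$. Nondegeneracy of $\omega_{\a}$ forces $f(z)\neq0$, and since the map $x\mapsto\omega_{\a}(x,z)$ is nonzero, $\lambda\neq 0$. So I choose $c:=-1/f(z)$ and $g:=-\lambda/2$; then (viii) and (ix) agree. Moreover $\lambda$ is a character of $\a$, because $\ad$ restricted to a one-dimensional ideal is a representation into an abelian algebra, so $dg=0$ as required by (vii). For (xv), the choice $a=f$ gives $c\rho(z)=cf(z)A+cg(z)\Id_{\q}=-A+cg(z)\Id_{\q}$ with $g(z)=-\lambda(z)/2=0$. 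Hence $Aw+c\rho(z)w=0$, which kills (xv). The remaining items (xii)--(xiii) hold because $\gamma=0$.

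The step I expect to be the main obstacle is the mixed triple $(x,u,v)$, i.e.\ (viii)--(x). This is where a purely symplectic $\rho$ fails, since it would force $\omega_{\a}(\cdot,z)=0$, and the scalar correction $g\,\Id_{\q}$ is precisely what repairs it. Once the bracket is verified, set $\eta:=\theta-f$, where $\theta$ is dual to $\xi$ and $f$ is extended by zero on $\R\xi\oplus\q$. Then $-d\eta$ restricts to $\omega_{\a}$ on $\a$ and to $\omega_{\q}$ on $\q$ (checking that $f(\beta(u,v))$ contributes correctly: $f(cz)=-1$ adds to the $\xi$-term, so one may need to rescale $c$ or $\omega_{\q}$, which is harmless). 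The two pieces are $\omega$-orthogonal because $[\a,\q]\subseteq\q\subseteq\ker\eta$, so $\eta$ is contact with Reeb vector $\xi$. Finally $\ker\ad_{\xi}=\R\xi\oplus\a$ and $\im\ad_{\xi}=\q$, giving $2k\in\mathcal R(\a,\omega_{\a})$ for every $k$.
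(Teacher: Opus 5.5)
Your construction is the paper's. Write $\chi:=-\lambda$ and $z':=cz=-z/f(z)$; the paper instead rescales $z$ so that $\alpha(z)=-1$. Then your $\rho(x)=f(x)A-\tfrac{\lambda(x)}{2}\Id_{\q}$ and $[u,v]=\omega_{\q}(u,v)(\xi+cz)$ are exactly the paper's $\rho(x)=\tfrac{\chi(x)}{2}\Id+\alpha(x)A$ and $[u,v]=\omega_{\q}(u,v)(\xi+z')$. Checking the items of Lemma~\ref{lemma: casi todo jacobi} is the same as the paper's triple-by-triple Jacobi check. That part is correct, including your observation that the scalar term $g\,\Id_{\q}$ is what makes the $(x,u,v)$ triple work.

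The final step, however, is wrong as written. Your $\t=\R\xi\oplus\a$ is the contactization, so $[x,y]=[x,y]_{\a}+\omega_{\a}(x,y)\xi$ already carries the cocycle, and your check of (vii) needs that term. You imported $\eta=\theta-f$ from Proposition~\ref{prop: realizable 1}, but there $[x,y]_{\g}=[x,y]_{\a}$ has no $\xi$-component, so here the primitive is counted twice:
$-d\eta(x,y)=\omega_{\a}(x,y)-f([x,y]_{\a})=2\omega_{\a}(x,y)$ and $-d\eta(u,v)=\omega_{\q}(u,v)(1-cf(z))=2\omega_{\q}(u,v)$.
So $\theta-f$ is still a contact form with Reeb vector $\xi$ that makes $\g$ DS-contact. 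But its $\t_0$-part $\t\cap\ker\eta=\{x+f(x)\xi\}$ is isomorphic to $(\a,2\omega_{\a})$, not to $(\a,\omega_{\a})$.

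Your suggested remedy does not work. The constant $c$ is pinned down by $cf(z)=-1$: this is what makes (viii) and (ix) compatible and what kills (xv). Rescaling $\omega_{\q}$ does not touch the $\a$-part. The fix is to take $\eta:=\theta$, dual to $\xi$ with respect to $\R\xi\oplus\a\oplus\q$, as the paper does. Then $-d\eta|_{\a}=\omega_{\a}$ and $-d\eta|_{\q}=\omega_{\q}$, the mixed terms vanish, $\xi$ is the Reeb vector, and $\t_0=\a$ with $\omega_{\t}=\omega_{\a}$. Equivalently, you can keep your form but use $\tfrac12(\theta-f)$, whose Reeb vector is $2\xi$.
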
 
\begin{proof}
    Let $\R z$ be the one-dimensional ideal in $\a$. Let $\chi \in \a^*$ be defined by the condition that $[x,z]_{\a} = \chi(x) z$ for all $x \in \a$. Choose $\alpha \in \a^*$ such that $d_{\a} \alpha = \omega_{\a}$, which is readily shown to be closed. The nondegeneracy of $\omega_{\a}$ implies that $\alpha(z) \neq 0$, for otherwise every $x \in \a$ would satisfy
\begin{align*}
    \omega_{\a}(x,z) = d_{\a} \alpha(x,z) = - \alpha([x,z]_{\a}) = -\chi(x) \alpha(z).
\end{align*}
    \noindent We may assume $\alpha(z)= -1$, replacing $z$ by a nonzero scalar multiple if necessary. Pick a symplectic vector space $(\q,\omega_{\q})$ of dimension $2m$, and let $A \in \sp(\q,\omega_{\q})$ be invertible. Define
\begin{align*}
    \rho:\a \to \End(\q), \quad \rho(x) := \frac{\chi(x)}{2}\Id + \alpha(x)A.
\end{align*}
    \noindent On the vector space $\g$ given by $\R \xi \oplus \a \oplus \q$, let 
\begin{gather*}
    [\xi,\a]_{\g} = 0, \quad [\xi,u]=Au, \quad [x,u]_{\g} = \rho(x)u, \\
    [x,y]_{\g}=[x,y]_{\a} + \omega_{\a}(x,y) \xi, \quad [u,v]_{\g} = \omega_{\q}(u,v)(\xi+z)
\end{gather*}
    \noindent define a skew-symmetric bilinear product on $\g$; here $x$, $y\in \a$ and $u$, $v\in \q$. We now verify that Jacobi identity holds: 
\begin{itemize}
    \item Since $\R\xi \oplus \a$ is just  the contactization of $(\a,\omega_{\a})$, Jacobi holds for the triples on $\R\xi \oplus \a$.
    \item For $(x,y,u)$ with $x$, $y\in \a$ and $u\in \q$, we have
\begin{align*}
    \rho([x,y]_{\a}) + \omega_{\a}(x,y)A = \frac{\chi([x,y]_{\a})}{2} \Id + \alpha([x,y]_{\a})A + \omega_{\a}(x,y)A = 0 = [\rho(x),\rho(y)]. 
\end{align*}
    \noindent This follows from the following facts: $\chi$ is a character, $d_{\a} \alpha = \omega_{\a}$, $\Id$ and $A$ commute. 
    \item For $(x,u,v)$ with $x\in \a$ and $u$, $v\in \q$, since $[x,z]_{\a} = \chi(x) z$ and $\omega_{\a}(x,z) = \chi(x)$, as well as $\rho(x)^* \omega_{\q} = \chi(x) \omega_{\q}$, we have
\begin{align*}
    [x,[u,v]]_{\g} &= \omega_{\q}(u,v) [x,\xi+z]_{\g} = \omega_{\q}(u,v) \chi(x)(\xi + z) \\
    &= (\rho(x)^* \omega_{\q})(u,v) (\xi + z) = [\rho(x) u,v]_{\g} + [u, \rho(x) v]_{\g}. 
\end{align*} 
    \item Since $A \in \sp(\q,\omega_{\q})$, Jacobi holds for $(\xi,u,v)$ with $u$, $v\in \q$. 
    \item For $(u,v,w)$ with $u$, $v$, $w\in \q$, it suffices to note that
\begin{align*}
    [u,\xi+z]_{\g} = -Au-\rho(z)u = -Au + Au = 0, 
\end{align*}
    \noindent stemming from the facts that $\chi(z) = 0$ and $\alpha(z) = - 1$. 
\end{itemize}     
    \indent Let $\eta\in \g^*$ be dual to $\xi$. Since $[\xi,\a] = 0$, $[\xi,\q] \subseteq \q$, and $\eta \vert_{\q}=0$, it follows that $d_{\g}\eta(\xi,\cdot)=0$. Also, $-(d_{\g}\eta)\vert_{\a} = \omega_{\a}$ and $-(d_{\g}\eta)\vert_{\q} = \omega_{\q}$, while the mixed terms vanish because $[\a,\q] \subseteq \q$. Hence $-(d_{\g}\eta) \vert_{\ker\eta} = \omega_{\a} + \omega_{\q}$ is symplectic, so $\eta$ is contact with Reeb vector $\xi$. By construction, $\ad_{\xi}$ vanishes on $\R\xi\oplus \a$ and equals $A$ on $\q$, hence $\ker \ad_{\xi}=\R\xi\oplus \a$ and $\im \ad_{\xi}=\q$. 
\end{proof}
    \indent The combination of Corollary \ref{cor: beta neq when dim q geq 4} and Corollary \ref{cor: some obstructions} (i) is enough to rule out some Frobenius Lie algebras from being realizable for specific dimensions of $\q$. To see this, let $(\a, \omega_{\a})$ be a Frobenius Lie algebra and set
\begin{align*}
    \nu(\a) := \min\{ \dim J \mid \text{$J$ is a nonzero ideal in $\a$} \}.
\end{align*}    
\begin{corollary} \label{cor: obstruction}
    If $k^2 < \nu(\a)$ for some $k \geq 2$ then $2k \notin \mathcal{R}(\a, \omega_{\a})$. 
\end{corollary}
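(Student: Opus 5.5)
We argue by contradiction. Suppose $(\a,\omega_{\a})$ is realized as the $\t_0$-part of a DS-contact Lie algebra $(\g,\eta)$ with $\dim \q = 2k$, where $k \geq 2$ and $k^2 < \nu(\a)$. We consider $\b := \im \beta \subseteq \t_0 \cong \a$, viewed inside the Lie algebra $(\t_0,\alpha)$. Under the realization, $(\t_0,\alpha)$ is identified with $\a$ as a Lie algebra, and $\omega_{\t}$ with $\omega_{\a}$.

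The first step is to show $\b \neq 0$. Since $\dim \q = 2k \geq 4$, the contrapositive of Corollary \ref{cor: beta neq when dim q geq 4} gives $\beta \neq 0$, hence $\b \neq 0$.

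The second step invokes Corollary \ref{cor: some obstructions} (i). It says that $\b$ is an ideal of $(\t_0,\alpha) \cong \a$ and that
\begin{align*}
    \dim \b \leq \left( \frac{\dim \q}{2} \right)^2 = k^2.
\end{align*}
So $\b$ is a nonzero ideal of $\a$, and by the definition of $\nu(\a)$ we get $\nu(\a) \leq \dim \b \leq k^2$. This contradicts $k^2 < \nu(\a)$. Hence $2k \notin \mathcal{R}(\a,\omega_{\a})$.

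The only point needing care is that the realization identifies $(\t_0,\alpha)$ with $\a$ as a Lie algebra, and not merely as a vector space. Ideals are then transported, so $\nu$ can be computed on either side. This is part of what "realizable" means: $\alpha$ is the Lie bracket on $\t_0$ in the DS setting (Lemma \ref{lemma: casi todo jacobi DS} (iv)). Beyond this check, the argument is a direct combination of the two cited corollaries, and I expect no real obstacle.
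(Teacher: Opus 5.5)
Your proof is correct and is the paper's own argument: Corollary~\ref{cor: beta neq when dim q geq 4} gives $\b = \im \beta \neq 0$ when $\dim \q = 2k \geq 4$, and Corollary~\ref{cor: some obstructions}~(i) makes $\b$ a nonzero ideal of $(\t_0,\alpha) \cong \a$ with $\dim \b \leq k^2$, which contradicts $k^2 < \nu(\a)$. Your bound $\dim \b \leq k^2$ is the accurate one (the paper's wording ``less than $k^2$'' slightly overstates what is proven), and because the hypothesis is strict this bound is enough.
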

\begin{proof}
    Every realization of a given Frobenius Lie algebra $(\a, \omega_{\a})$ with $\q$ of dimension $\dim \q = 2k$ with $k \geq 2$ forces $\a$ to have a nontrivial ideal of dimension less than $k^2$, namely $\b = \im \beta$. 
\end{proof}
    \indent We illustrate Corollary \ref{cor: obstruction} with an example. Consider the Lie algebra $\mathfrak{gl}(n,\R) \ltimes V$, where $V = \R^n$, where the Lie bracket $[(X,u), (Y,v)]$ is given by $([X,Y], Xv - Yu)$. Notice that $V$ is the unique minimal nonzero ideal of $\a_n$, which has dimension $n$. Indeed, first notice that it cannot contain proper ideals since it is irreducible as a $\mathfrak{gl}(n,\R)$-submodule. Now pick $0 \neq J \lhd \a_n$, and argue that $V \subseteq J$ by considering two different cases: 
 \begin{itemize}
     \item If $J \cap V \neq 0$ then $J \cap V$ is a nonzero $\mathfrak{gl}(n,\R)$-submodule of the standard module $V$, which is irreducible, and so $J \cap V = V$.
     \item If $J \cap V = 0$ then we arrive at a contradiction. For if $(A,u) \in I$ is nonzero then $A \neq 0$, for otherwise $U \neq 0$ and so $(0,u) \in J \cap V$; thus there is some $w \in V$ for which $Aw \neq 0$, and then $[(A,u),(0,w)] = (0,Aw) \in J \cap V$. 
 \end{itemize}
    \noindent Let $T := \diag(1,\dots,n)$ and let $b\in V^*$ be the row vector $(1, \dots, 1)$. Define $\lambda \in \a_n^*$ by $\lambda(X,u) := \tr(TX) + b(u)$, and set $\omega_{\a_n} := d \lambda$. We claim that $\omega_{\a_n}$ is nondegenerate, and thus an exact symplectic form on $\a_n$. To see this, take  $(X,u) \in \a_n$ in the kernel of $\omega_{\a_n}$. Then, for all $(Y,v) \in \a_n$,
\begin{align*}
    0 &= - \omega_{\a_n} ((X,u), (Y,v) ) = \lambda ([(X,u),(Y,v)] ) \\
    &= \tr(T[X,Y]) + b(Xv-Yu) = \tr (([T,X] - ub)Y) + (bX)(v).
\end{align*}
    \noindent Hence, $bX = 0$ and $[T,X] = ub$. The diagonal of $[T,X]$ is zero,  whereas the diagonal of $ub$ is just the vector $u$. Thus $u = 0$, and therefore $[T,X] = 0$. Since $X$ commutes with $T$, $X$ must be diagonal. The fact that $bX = 0$ means that every column sum of $X$ is zero, and so $X = 0$. To sum up, the kernel of $\omega_{\a_n}$ is trivial. We have thus shown the following: 
\begin{proposition} \label{prop: realizable 3}
    For the Frobenius Lie algebra $(\a_n, \omega_{\a_n})$ described above, $2k \notin \mathcal{R}(\a_n, \omega_{\a_n})$ for any $k \in \N$ such that $k^2 < n$. 
\end{proposition}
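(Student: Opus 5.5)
The plan is to apply Corollary \ref{cor: obstruction} to $(\a_n,\omega_{\a_n})$. This requires two inputs: that $(\a_n,\omega_{\a_n})$ is a Frobenius Lie algebra, and that $\nu(\a_n)=n$. Both were essentially established in the discussion preceding the statement, so the proof mainly consists of assembling them and checking the edge cases.

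For the first input, $\omega_{\a_n}=d\lambda$ is exact by definition, and its nondegeneracy was verified above. An element $(X,u)$ in the kernel must satisfy $bX=0$ and $[T,X]=ub$. Comparing diagonals gives $u=0$. Since $T$ has distinct diagonal entries, $X$ is then diagonal, and the column-sum condition forces $X=0$.

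For the second input, the argument above shows that every nonzero ideal $J$ of $\a_n$ contains $V$. If $J\cap V\neq 0$, irreducibility of $V$ as a $\mathfrak{gl}(n,\R)$-module gives $J\cap V=V$. If $J\cap V=0$, bracketing a nonzero $(A,u)\in J$ with a suitable $(0,w)$ produces a nonzero element of $J\cap V$, a contradiction. Hence every nonzero ideal has dimension at least $n$. Since $V$ itself is an ideal of dimension $n$, we get $\nu(\a_n)=n$.

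Now let $k\in\N$ with $k^2<n$. If $k\geq 2$, Corollary \ref{cor: obstruction} immediately gives $2k\notin\mathcal R(\a_n,\omega_{\a_n})$. The main obstacle is the case $k=1$, which the corollary does not cover. This case genuinely conflicts with Proposition \ref{prop: realizable 1}, which asserts $2\in\mathcal R$ for every Frobenius Lie algebra. So for $k=1$ (which arises whenever $n\geq 2$), the statement as written cannot hold. The intended reading must be $k\geq 2$, matching the corollary, and I would state the proof under that restriction, noting this explicitly.

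Underneath the corollary, the mechanism is as follows. If $\dim\q=2k\geq 4$, Corollary \ref{cor: beta neq when dim q geq 4} gives $\b\neq 0$. Corollary \ref{cor: some obstructions}(i) then makes $\b$ a nonzero ideal of $\a_n$ with $\dim\b\leq k^2<n=\nu(\a_n)$, which is a contradiction.
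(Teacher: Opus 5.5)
Your argument is the paper's own. The text preceding the statement verifies that $\omega_{\a_n}=d\lambda$ is nondegenerate and that $V$ is the unique minimal nonzero ideal, so $\nu(\a_n)=n$, and the result is then read off from Corollary~\ref{cor: obstruction}, whose mechanism is $\b\neq 0$ when $\dim\q\geq 4$ together with $\dim\b\leq k^2$.

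Your caveat is also correct. The corollary only covers $k\geq 2$, and for $k=1$ (which satisfies $k^2<n$ whenever $n\geq 2$) the statement as printed contradicts Proposition~\ref{prop: realizable 1}. The proposition must therefore be read with $k\geq 2$, which is exactly what you prove.
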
 

    \indent It is unclear at the moment whether $\mathcal{R}(\a, \omega_{\a})$ actually depends on (the equivalence class of) the exact symplectic form $\omega_{\a}$. Note that both Proposition \ref{prop: realizable 1} and Proposition \ref{prop: realizable 2} are valid irrespective of the chosen symplectic form $\omega_{\a}$, so there is some evidence against it. A related observation is as follows: Fix a realization $\g = \R \xi \oplus \a \oplus \q$ of a Frobenius Lie algebra $(\a, \omega_{\a})$ with contact form $\eta$, which comes with their associated tensor $\beta: \alt^2 \q^* \to \a$ and some symplectic form $\omega_{\q}$ on $\q$. Let $\sigma_{\a}$ be another exact symplectic form on $\a$, and choose $\tau \in \a^*$ such that $d \tau = \sigma_{\a} - \omega_{\a}$. We can extend $\tau$ by $0$ on $\R \xi \oplus \q$, and define $\eta_{\tau} := \eta - \tau$. Notice that
\begin{align*}
    - (d \eta_{\tau})\vert_{\a} = \sigma_{\a}, \quad - (d \eta_{\tau})\vert_{\q} = \omega_{\q} - \tau \circ \beta,
\end{align*}
    \noindent so $\eta_{\tau}$ is again a contact form on $\g$ with the same Reeb vector if and only if $\omega_{\q} - \tau \circ \beta$ is symplectic on $\q$. When this happens, $(\a, \sigma_{\a})$ is realizable on the same space as $(\a, \omega_{\a})$, the only two differences being the contact form $\eta_{\tau}$ on $\g$ and the symplectic form $\omega_{\q} - \tau \circ \beta$ on $\q$. In particular, $\mathcal{R}(\a, \omega_{\a}) = \mathcal{R}(\a, \sigma_{\a})$. This situation is, however, rather special, so little intuition comes out of it. 

\subsection{A classification result in dimension five} \label{section: a classification result in dimension five}
    \indent As remarked below Proposition \ref{prop: contactization}, contact Lie algebras with nontrivial center are in bijective correspondence with symplectic Lie algebras, via $1$-dimensional central extension by a symplectic cocycle. Thus, in view of a classification of DS-contact Lie algebras, it is natural to split the problem according to whether the center is trivial or not. In dimension $5$, the case of nontrivial center reduces to the classification of $4$-dimensional symplectic Lie algebras, which is known by \cite{Ovando}. Hence only the centerless case is relevant here. 
\begin{proposition} \label{prop: 5-dimensional DS-contact Lie algebras}
    A centerless $5$-dimensional DS-contact Lie algebra is isomorphic to exactly one of the following Lie algebras:  
\begin{enumerate} [\rm (i)]
    \item The Lie algebra $\g_{0,+}$ with basis $\{\xi,x,y,u,v\}$ and nonzero brackets given by
    \begin{gather*}
        [x,y] = \xi+y, \quad [u,v] = \xi, \quad [\xi,u] = v, \quad [\xi,v] = -u, \\ 
        [y,u] = -v, \quad [y,v] = u.
    \end{gather*} 
    \item The Lie algebra $\g_{1,+}$ with basis $\{\xi,x,y,u,v\}$ and nonzero brackets given by
    \begin{gather*}
        [x,y] = \xi+y, \quad [u,v] = \xi, \quad [\xi,u] = u, \quad [\xi,v] = -v, \\
        [y,u] = -u, \quad [y,v] = v.
    \end{gather*} 
    \item The Lie algebra $\g_{0,-}$ with basis $\{\xi,x,y,u,v\}$ and nonzero brackets given by
    \begin{gather*}
        [x,y] = \xi+y, \quad [u,v] = \xi+y, \quad [\xi,u] = v, \quad [\xi,v] = -u, \\
        [x,u] = \tfrac{1}{2} u, \quad [x,v] = \tfrac{1}{2} v, \quad [y,u] = -v, \quad [y,v] = u.
    \end{gather*}
    \item The Lie algebra $\g_{1,-}$ with basis $\{\xi,x,y,u,v\}$ and nonzero brackets given by
    \begin{align*}
        [x,y] = \xi+y, \quad [u,v] = \xi+y, \quad [\xi,u] = u, \quad [\xi,v] = -v, \\
        [x,u] = \tfrac{1}{2} u, \quad [x,v] = \tfrac{1}{2} v, \quad [y,u] = -u, \quad [y,v] = v.
    \end{align*}
\end{enumerate} 
\end{proposition}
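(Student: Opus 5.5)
Let $(\g,\eta)$ be centerless, $5$-dimensional and DS-contact. The plan is to pin down $\dim\t$ and $\dim\q$, identify $\t_0$ as a Frobenius Lie algebra, normalize $\beta$, $\rho$ and $A$, and then solve the Jacobi identities of Lemma \ref{lemma: casi todo jacobi}.

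\textbf{Dimensions and the $\t$-part.} Since $\g$ is centerless, $\ad_\xi\neq 0$, so $\q\neq 0$. By Proposition \ref{prop: t = R xi means sl(2;R) or su(2)} we have $\t_0\neq0$. As $\dim\t_0$ and $\dim\q$ are even and sum to $4$, this forces $\dim\t_0=\dim\q=2$. By Theorem \ref{thm: t_0 is frobenius}, $(\t_0,\alpha)$ is a $2$-dimensional Frobenius Lie algebra, hence isomorphic to $\aff(\R)$. Choose a basis $x,y$ with $\alpha(x,y)=y$. Every symplectic form on $\aff(\R)$ is a nonzero multiple of $x^*\wedge y^*$, and it is exact. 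Rescaling $\eta$ (and hence $\xi$) by a positive constant, and swapping signs if needed, we may take $\omega_\t(x,y)=1$. This gives $[x,y]=\xi+y$. By Corollary \ref{cor: gamma = 0 when dim q = 2} we have $\gamma=0$, so $[u,v]=\omega_\q(u,v)\xi+\beta(u,v)$ with $\beta(u,v)=b\in\t_0$.

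\textbf{Constraints on $\rho$, $A$ and $\beta$.} By Lemma \ref{lemma: casi todo jacobi DS}(ii), $\rho(\t_0)$ commutes with $A$. Since $A\in\sp(\q,\omega_\q)\cong\sl(2,\R)$ is invertible, its centralizer in $\mathfrak{gl}(\q)$ is $\R\Id\oplus\R A$. Write $\rho(x)=a_1\Id+c_1A$ and $\rho(y)=a_2\Id+c_2A$. Lemma \ref{lemma: casi todo jacobi}(vii) gives $0=[\rho(x),\rho(y)]=\rho(y)+A$, so $a_2=0$ and $c_2=-1$. Lemma \ref{lemma: casi todo jacobi DS}(iii) gives $2a_1\, b=\alpha(x,b)$ and $2a_2\, b=\alpha(y,b)$; the $\xi$-parts come from (viii). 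There are now two cases.

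If $b=0$, then (viii) with $x$ forces $a_1=0$. Replacing $x$ by $x+c_1 y$ keeps $[x,y]=\xi+y$ up to an adjustment, which should kill $c_1$.

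If $b\neq0$, then $b$ spans an ideal of $\aff(\R)$, so $b\in\R y$. Then $\alpha(x,b)=b$ gives $a_1=\tfrac12$, and (viii) fixes the scale, giving $b=y$ after normalizing $\omega_\q$. Again shift $x$ to remove $c_1$. The $(u,v,w)$ identity is vacuous because $\dim\q=2$.

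\textbf{Normal form of $A$.} Finally, normalize $A$. An invertible element of $\sl(2,\R)$ is elliptic or hyperbolic, and rescaling the basis of $\q$ within $\omega_\q$-symplectic changes brings $A$ to $\begin{pmatrix}0&-1\\1&0\end{pmatrix}$ or $\diag(1,-1)$. The catch is that the eigenvalue scale $\lambda$ cannot be rescaled freely, because $\xi$ is already fixed by $\omega_\t(x,y)=1$.

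\textbf{Main obstacle.} I expect the hardest part to be this last normalization: showing that the parameter $\lambda$ in $A$ can be absorbed. I would first try the combined rescaling $\xi\mapsto t\xi$, $y\mapsto ty$, $x\mapsto x$, together with a compatible rescaling of $u,v$, and check that it preserves $[x,y]=\xi+y$ and $\rho(y)=-A$ while scaling $A$ by $t$. If such a rescaling does not exist, a continuous family appears and the count of four algebras would fail.

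Non-isomorphism of the four algebras then follows from invariants: the sign of the real spectrum of $\ad_\xi$ (elliptic versus hyperbolic, the first index), and whether $[\q,\q]\subseteq\R\xi$ (the $\pm$ index, detected e.g. by $\xi\in[\g,\g]$-type data or by the dimension of the derived algebra).
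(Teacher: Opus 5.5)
Up to the normalization of $A$ your argument is the paper's: the same dimension count, $\t_0\cong\aff(\R)$, $\gamma=0$, centralizer $\Span\{\Id,A\}$, $\rho(y)=-A$, and the dichotomy $b=0$ / $b=y$ from Lemma~\ref{lemma: casi todo jacobi}(viii) and Lemma~\ref{lemma: casi todo jacobi DS}(iii). Your shift $x\mapsto x+c_1y$ needs no adjustment: $[x+c_1y,y]=[x,y]$ and $\rho(x+c_1y)=a_1\Id$. The paper uses $x\mapsto x-b\xi$ instead.

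\textbf{The normalization of $A$.} This is the genuine gap. Your claim that symplectic changes of basis bring $A$ to $J$ or $H$ is false. They only bring $A$ to $cJ$ or $cH$ with $c\neq 0$, and they fix $|c|$. For elliptic $A$ they also fix the sign of $c$, since $\omega_{\q}(w,Aw)=c\,|w|^2$.

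Your rescaling $\xi\mapsto t\xi$, $y\mapsto ty$, $u\mapsto su$, $v\mapsto rv$ with $sr=t$ does preserve all relations and absorbs $|c|$. But for $A=cJ$ the new matrix is $\begin{pmatrix}0&-r^2c\\ s^2c&0\end{pmatrix}$, so the sign of $c$ survives.

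\emph{Case $b=0$.} Here the sign is a genuine invariant, and you are missing the paper's splitting. Set $z:=\xi+y$. Then $[x,z]=z$, and $z$ commutes with $\xi,u,v$ because $\rho(y)=-A$; also $\rho(x)=0$. Hence $\g=\Span\{x,z\}\oplus\Span\{\xi,u,v\}$ as ideals. The second factor is $\su(2)$ for $A=cJ$ with $c>0$, and $\sl(2,\R)$ for $A=cJ$ with $c<0$ or for $A=cH$. So the elliptic case with $c<0$ belongs to $\g_{1,+}$, not to $\g_{0,+}$; your plan would either miss it or misidentify it.

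\emph{Case $b=y$.} Here the sign can be removed, but not by any rescaling. The paper sets $\xi':=c^{-1}\xi$ and $y':=z-\xi'$, keeping $z$, $x$, $u$, $v$ fixed. This preserves $[x,y']=\xi'+y'$, $[u,v]=\xi'+y'$ and $\rho(y')=-\ad_{\xi'}\vert_{\q}$. It yields $\ad_{\xi'}\vert_{\q}=c^{-1}A\in\{J,H\}$ for every real $c\neq0$. This is exactly what shows no continuous family (and no extra sign) appears.

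\textbf{Invariants for ``exactly one''.} Whether $\ad_\xi$ is elliptic or hyperbolic is not an isomorphism invariant of $\g$, because $\xi$ depends on the contact form. Indeed, with $b=0$, $\aff(\R)\oplus\sl(2,\R)$ arises both from $A=-J$ and from $A=H$. Use these invariants instead:
\begin{itemize}
\item $\dim[\g,\g]$ ($4$ versus $3$), or solvability, separates the $+$ and $-$ families.
\item The Levi factor ($\su(2)$ versus $\sl(2,\R)$) separates $\g_{0,+}$ from $\g_{1,+}$.
\item Complete solvability separates the last pair. $\g_{1,-}$ has the full flag of ideals $\R z\subset\Span\{z,u\}\subset\Span\{z,u,v\}\subset\Span\{z,u,v,\xi\}\subset\g$, whereas in $\g_{0,-}$ the operator $\ad_\xi$ has eigenvalues $\pm i$.
\end{itemize}
The paper itself settles $\g_{0,-}$ versus $\g_{1,-}$ only by citing the classifications in \cite{AFV}, \cite{CF} and \cite{D}.
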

\begin{proof}
    One has $\dim \t = 3$. Indeed, $\dim \t = 5$ would force $\g$ to have nontrivial center, while $\dim \t = 1$ would make $\g$ $3$-dimensional by Proposition \ref{prop: t = R xi means sl(2;R) or su(2)}. Equivalently, $\dim \t_0 = 2$. Since $\t_0$ is not abelian, by Corollary \ref{cor: t_0 is not unimodular} it follows that $\t_0 \cong \aff(\R)$. Thus there is a basis $\{x,y\}$ of $\t_0$ such that $[x,y]_{\t_0} = y$ and $\omega_{\t}(x,y) = 1$, and therefore
\begin{align*}
    [x,y]_{\t} = \omega_{\t}(x,y)\xi + [x,y]_{\t_0} = \xi + y,
\end{align*}
    \noindent as $\t$ is the contactization of $\t_0$ by the cocycle $\omega_{\t}$. Since $\dim \q = 2$, we may also choose a basis $\{u,v\}$ of $\q$ such that $\omega_{\q}(u,v) = 1$. Then
\begin{align*}
    [u,v] = \omega_{\q}(u,v)\xi + \beta(u,v) + \gamma(u,v) = \xi + \beta + \gamma,
\end{align*} 
    \noindent where $\beta \in \t_0$ and $\gamma \in \q$ collapse to just two vectors since $\alt^2 \q^*$ is 1-dimensional. From Corollary \ref{cor: gamma = 0 when dim q = 2} we get $\gamma = 0$. Next, Lemma \ref{lemma: casi todo jacobi DS} (ii) ensures that both $\rho(x)$ and $\rho(y)$ lie in the centralizer of $A$ in $\End(\q)$; since $A$ is an invertible trace-free endomorphism of the $2$-dimensional symplectic space $(\q,\omega_{\q})$, its centralizer in $\End(\q)$ is $\Span\{ \Id, A \}$. Hence, there are $a$, $b$, $c$, $d \in \R$ for which
\begin{align*}
    \rho(x) = a \Id + bA, \quad \rho(y) = c \Id + dA.
\end{align*}
    \noindent In particular, $\rho(x)$ and $\rho(y)$ commute. Hence, by Lemma \ref{lemma: casi todo jacobi} (vii),
\begin{align*}
    0 = [\rho(x), \rho(y)] = \rho(\alpha(x,y)) + \omega_{\t}(x,y) A = \rho(y) + A,
\end{align*}
    \noindent and so $\rho(y) = - A$. Now write $\beta = sx+ty \in \t_0$ for some $s$, $t \in \R$. Lemma \ref{lemma: casi todo jacobi} (viii) yields
\begin{gather*}
    2a = \tr(\rho(x)) = (\rho(x)^* \omega_{\q})(u,v) = \omega_{\t}(x,\beta)) = t, \\
    0 = - \tr(A) = \tr(\rho(y)) = (\rho(y)^* \omega_{\q})(u,v) = \omega_{\t}(y,\beta)) = - s,
\end{gather*}
   \noindent hence $\beta = 2a y$. Then, Lemma \ref{lemma: casi todo jacobi} (ix) yields 
\begin{align*}
    4a^2 y = 2a (2a y) = \tr(\rho(x)) \beta = (\rho(x)^* \beta)(u,v) = \alpha(x, \beta) = 2a \alpha(x,y) = 2ay,
\end{align*}  
    \noindent from where $0 = 2a (1 - 2a) y$, and therefore either $a = 0$ or $a = \frac{1}{2}$. Thus, since replacing $x \mapsto x - b \xi$ preserves the relation $[x,y] = \xi + y$ and changes $\rho(x)$ to $\rho(x) - b A$, we may assume  
\begin{align} \label{eq: choice of rho and beta}
    \text{either both $\rho(x)$ and $\beta$ are $0$, or $\rho(x) = \frac{1}{2} \Id$ and $\beta = y$}. 
\end{align}
    \noindent The fact that $A \in \sp(\q, \omega_{\q})$ is invertible on a $2$-dimensional symplectic space means that the basis $\{u,v\}$ can always be chosen so that
\begin{align} \label{eq: choice of A}
    \text{either $A = c J$ or $A = c H$ for some $c \neq 0$, where $J :=  \begin{pmatrix} 0 & -1 \\ 1 & \phantom{+} 0\end{pmatrix}$ and $H := \begin{pmatrix} 1 & \phantom{+} 0 \\ 0 & - 1\end{pmatrix}$}. 
\end{align}
    \noindent The discussion now splits into several cases. First, if $\beta = 0$, then we can set $z := \xi + y$ to obtain
\begin{align*}
    \g := \Span\{x,z\}\oplus\Span\{\xi,u,v\};
\end{align*}
    \noindent the first factor is $\aff(\R)$, while the second is a $3$-dimensional simple Lie algebra determined by 
\begin{align*}
    [u,v]=\xi, \quad \ad_\xi|_{\Span\{u,v\}}=A.
\end{align*}
    \noindent If $A = cJ$ with $c>0$, this factor is $\su(2)$; if $A=cJ$ with $c<0$, it is $\sl(2,\R)$; if $A=cH$, it is again $\sl(2,\R)$. Thus the case $\beta=0$ gives precisely $\g_{0,+}$ and $\g_{1,+}$. Assume now that $\beta=y$, and set $z:=\xi+y$ once again. Then
\begin{align*}
    [x,z] = z, \quad [u,v] = z, \quad
    [x,u] = \tfrac{1}{2}u, \quad [x,v] = \tfrac{1}{2}v.
\end{align*}
    \noindent In this case, the rescaling $\xi \mapsto \xi' := c^{-1}\xi$ and $y \mapsto y' := z - \xi'$ normalizes $A$ to either $J$ or $H$, while preserving the brackets $[x,y']=\xi'+y'$ and $[u,v]=\xi'+y'$. >Hence, the case $\beta=y$ gives precisely $\g_{0,-}$ and $\g_{1,-}$. 
\end{proof}
    \indent For $\g_{0,+}$ and $\g_{1,+}$, set $z := \xi + y$. Since $[x,z] = z$ and $z$ commutes with $\xi$, $u$, and $v$, one readily checks that $\g$ splits as a direct sum of ideals $\Span\{x,z\}$ and $\Span\{\xi,u,v\}$. Moreover, as discussed during the proof, 
\begin{align*}
    \g_{0,+} \cong \aff(\R) \oplus \su(2), \quad \g_{1,+} \cong \aff(\R) \oplus \sl(2,\R).  
\end{align*}
    \noindent Likewise, for $\g_{0,-}$, the basis
\begin{align*}
    e_1 := x, \quad e_2 := \xi, \quad e_3 := -(\xi+y), \quad e_4 := u, \quad e_5 := v
\end{align*}
    \noindent yields the nonzero brackets
\begin{gather*}
    [e_1,e_3] = e_3, \quad [e_1,e_4] = \tfrac{1}{2} e_4, \quad [e_1,e_5] = \tfrac{1}{2} e_5, \\
    [e_2,e_4] = e_5, \quad [e_2,e_5] = -e_4, \quad [e_4,e_5] = -e_3,
\end{gather*}
    \noindent so $\g_{0,-}$ is precisely the Lie algebra denoted by $\g_0$ in both \cite[Theorem 13]{AFV} and \cite[Theorem 4.7]{CF}. It follows from those classifications that $\g_{0,+}$, $\g_{1,+}$, and $\g_{0,-}$ admit compatible Sasakian structures, whereas $\g_{1,-}$ does not. Thus $\g_{1,-}$ is the only strict example in the list (i.e., not admitting K-compact compatible metrics). 

    \indent Notice that both $\g_{0,-}$ and $\g_{1,-}$ are solvable (indeed, the commutator ideal for both of them is $\Span\{\xi + y\}$), while neither $\g_{0,+}$ nor $\g_{1,+}$ are solvable (their derived series stabilizes at $\Span \{\xi,u,v\}$). Both $\g_{0,-}$ and $\g_{1,-}$ appear on the list given in \cite[Section 4.2]{D}, being isomorphic to the Lie algebras 22 and 21, respectively. 
    
    \indent It seems plausible that similar arguments could be used to classify DS-contact Lie algebras in higher dimensions, provided classifications of lower-dimensional exact and nonexact symplectic Lie algebras are available. It is less clear whether such a classification would be worth pursuing, since the computations appear to grow quickly out of hand.

\subsection{More cohomology remarks} \label{section: more cohomology remarks}

    \noindent For the $1$-form $\ell \in \t^*$ defined as the restriction of the form defined in equation \eqref{eq: ell form}, which satisfies $\omega_{\t} = d_{\t_0} \ell$ due to Theorem \ref{thm: t_0 is frobenius}, define 
\begin{align*}
    \delta_{\ell} := \omega_{\q} + \ell \circ \beta \in \alt^2 \q^*.
\end{align*}
    \noindent Denote by $\beta^*: Z^1_{CE}(\t_0) \to \alt^2 * \q^*$ the map given by $\beta^*([\sigma]) = \sigma \circ \beta$. Define
\begin{align*}
    F_{\ell}:\R \to Z^1_{CE}(\t_0) \to \alt^2 \q^*, \quad F_{\ell}(c, [\sigma]) = c \delta_{\ell} + \sigma \circ \beta. 
\end{align*}
    \noindent The map $F_{\ell}$ is merely a contraption to detect closed forms on $\g$, via $\ker F_{\ell} = Z^1_{CE}(\g)$. To see this set $\varphi \in \t^*$ to be  $\varphi = \eta + \ell$, which is closed on $\t$ due to Lemma \ref{lemma: simplecticidad} (v) and Theorem \ref{thm: t_0 is frobenius}. 
\begin{lemma} \label{lemma: ker F ell}
    A $1$-form $\lambda\in\g^*$ is closed if and only if $\lambda \vert_{\q} = 0$ and $\lambda \vert_{\t} = c \varphi + \sigma$ for some $c \in \R$ and $\sigma \in Z^1_{CE}(\t_0)$ satisfying $F_{\ell}(c,[\sigma])=0$.
\end{lemma}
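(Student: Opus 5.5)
We work in the DS-contact setting, with $\g = \R\xi \oplus \t_0 \oplus \q$ and $C = 0$. Write an arbitrary $\lambda \in \g^*$ as $\lambda = \lambda\vert_{\t} + \lambda\vert_{\q}$, extending each piece by zero on the complementary summand. The plan is to evaluate $d_{\g}\lambda(a,b) = -\lambda([a,b])$ on the three kinds of pairs: $\t\times\t$, $\t\times\q$, and $\q\times\q$.

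First take pairs in $\t\times\q$. By Lemma \ref{lemma: q is a t-module}, $[\t,\q]\subseteq\q$, and $\ad_{\xi}\vert_{\q} = A$ is invertible. Hence $d\lambda(\xi,u)=-\lambda\vert_{\q}(Au)$ for all $u\in\q$, and its vanishing for all $u$ is equivalent to $\lambda\vert_{\q}=0$. So closedness forces $\lambda\vert_{\q}=0$. Conversely, once $\lambda\vert_{\q}=0$, every mixed pair $(x,u)$ with $x\in\t$ gives $-\lambda([x,u])=0$ automatically, since $[x,u]\in\q$.

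Next take pairs in $\t\times\t$. Since $\t$ is a subalgebra, the restriction $\lambda\vert_{\t}$ must be closed on $\t$. Now $\t$ is the contactization of $(\t_0,\omega_{\t})$ by Proposition \ref{prop: contactization}, and Theorem \ref{thm: t_0 is frobenius} gives $\omega_{\t}=d_{\t_0}\ell$. Therefore $\varphi=\eta+\ell$ is closed on $\t$: one checks $d_{\t}\eta=-\omega_{\t}$ and $d_{\t}\ell=\omega_{\t}$, using $\sigma=0$ from Proposition \ref{prop: propiedades de ell}(ii).
\begin{itemize}
\item Given a closed $\psi\in\t^*$, set $c:=\psi(\xi)$. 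Then $\psi-c\varphi$ vanishes on $\xi$ and is closed. Via the identification of Remark \ref{obs: when are dg and dh equal}, where $\xi$-basic forms on $\t$ correspond to forms on $\t_0$ with differential $d_{\t_0}$ ($\mathcal L_\xi=0$ since $C=0$), it is a closed form $\sigma\in Z^1_{CE}(\t_0)$.
\item Conversely, every $c\varphi+\sigma$ of this shape is closed on $\t$.
\end{itemize}
This yields the parametrization $\lambda\vert_{\t}=c\varphi+\sigma$.

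Finally take pairs in $\q\times\q$. Using $[u,v]=\omega_{\q}(u,v)\xi+\beta(u,v)+\gamma(u,v)$ and $\lambda\vert_{\q}=0$, we get
\[
-\lambda([u,v]) = -\big(c\,\omega_{\q}(u,v)+c\,\ell(\beta(u,v))+\sigma(\beta(u,v))\big).
\]
Here $\varphi(\xi)=1$, and $\eta$ vanishes on $\t_0$, so $\varphi$ restricted to $\t_0$ equals $\ell$. The right-hand side is $-(c\delta_{\ell}+\sigma\circ\beta)(u,v)=-F_{\ell}(c,[\sigma])(u,v)$. Thus closedness on $\q\times\q$ is exactly $F_{\ell}(c,[\sigma])=0$.

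Putting the three cases together proves both directions of the lemma. The only delicate step is the second one: identifying closed forms on $\t$ with $\R\varphi\oplus Z^1_{CE}(\t_0)$. This needs care with sign conventions, namely that $d\eta=-\omega_{\t}$ is cancelled by $d\ell=\omega_{\t}$ on $\t$. I would verify this directly on pairs $(x,y)\in\t_0\times\t_0$ via $[x,y]=\omega_{\t}(x,y)\xi+\alpha(x,y)$, and on pairs $(\xi,x)$ using $C=0$.
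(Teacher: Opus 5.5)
Your proof is correct and follows essentially the same route as the paper. You force $\lambda\vert_{\q}=0$; the paper phrases this as $\q=\im\ad_{\xi}\subseteq[\g,\g]$, while you use invertibility of $A$, which amounts to the same thing. You then parametrize closed forms on the subalgebra $\t$ as $c\varphi+\sigma$ using $\omega_{\t}=d_{\t_0}\ell$, and read off $F_{\ell}(c,[\sigma])=0$ from pairs in $\q\times\q$. Your handling of the converse direction (mixed pairs vanish via $[\t,\q]\subseteq\q$) and of the check $d_{\t}\varphi=0$ is more explicit than the paper's.
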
    
\begin{proof}
    Every closed $1$-form vanishes on $\q = \im(\ad_{\xi}) \subseteq [\g,\g]$. Write $\lambda \vert_{\t} = c \eta \vert_{\t} + \tau$ for some $\tau \in \t_0^*$, and note that
\begin{align*}
    d_{\t}(\lambda\vert_{\t}) = - c \omega_{\t} + d_{\t_0} \tau = c d_{\t_0} (\ell + \tau)   
\end{align*}
    \noindent Thus $\lambda \vert_{\t}$ is closed on $\t$ if and only if there is $\sigma \in Z^1_{CE}(\t_0)$ for which $\tau = c \ell + \sigma$. The remaining conditions on $c$ and $\sigma$ come from the fact that $[\t,\q] \subseteq \q$, since for all $u$, $v \in \q$ we have
\begin{align*}
    0 &= \lambda([u,v]) =c \omega_{\q}(u,v) + c \ell(\beta(u,v)) + (\sigma \circ \beta)(u,v). \qedhere 
\end{align*}
\end{proof}
    \indent The next is a refined version of Lemma \ref{lemma: xi en el conmutador}. Set $b_1(\g) := \dim H^1_{CE}(\g)$
\begin{proposition}\label{prop: xi in commutator}
    The following conditions are equivalent on any DS-contact Lie algebra:
\begin{multicols}{3}
\begin{enumerate}[\rm (i)]
    \item $\xi \in [\g,\g]$.  
    \item $\delta_{\ell} \notin \im(\beta^*)$.
    \item $b_1(\g) = \dim\ker(\beta^*)$.
\end{enumerate}    
\end{multicols}
\end{proposition}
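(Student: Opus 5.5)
The plan is to read everything off Lemma \ref{lemma: ker F ell}, which identifies $Z^1_{CE}(\g)$ with $\ker F_{\ell}$ through the injective linear map $(c,\sigma) \mapsto \lambda$, where $\lambda\vert_{\q}=0$ and $\lambda\vert_{\t} = c\varphi + \sigma$. Injectivity holds because $\varphi(\xi) = 1$ while $\sigma$ vanishes on $\xi$ (we regard $\sigma \in \t_0^*$ as extended by zero on $\R\xi$). This map is also compatible with evaluation at $\xi$: the closed form attached to $(c,\sigma)$ satisfies $\lambda(\xi) = c$. Since $H^1_{CE}(\g) = Z^1_{CE}(\g)$, we get $b_1(\g) = \dim \ker F_{\ell}$. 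Moreover, $\xi \in [\g,\g]$ holds exactly when every closed $1$-form vanishes on $\xi$, that is, exactly when every $(c,\sigma) \in \ker F_{\ell}$ has $c = 0$.

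For (i)$\Leftrightarrow$(ii), I will analyse the condition $F_{\ell}(c,\sigma) = c\delta_{\ell} + \beta^*(\sigma) = 0$.
\begin{itemize}
\item If $\delta_{\ell} \in \im(\beta^*)$, say $\delta_{\ell} = \beta^*(\sigma_0)$, then $(1,-\sigma_0) \in \ker F_{\ell}$. This gives a closed $1$-form with $\lambda(\xi) = 1$, so $\xi \notin [\g,\g]$.
\item Conversely, if some $(c,\sigma) \in \ker F_{\ell}$ has $c \neq 0$, then $\delta_{\ell} = \beta^*(-\sigma/c) \in \im(\beta^*)$.
\end{itemize}
Hence (i) is equivalent to (ii). One should double-check that $\beta^*$ is defined on $Z^1_{CE}(\t_0)$ and not on classes; since $H^1 = Z^1$ for Lie algebras, this is harmless.

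For (ii)$\Leftrightarrow$(iii), I will compute $\dim \ker F_{\ell}$ by rank--nullity applied to the map $\R \oplus Z^1_{CE}(\t_0) \to \alt^2\q^*$:
\begin{itemize}
\item the image of $F_{\ell}$ is $\R\delta_{\ell} + \im(\beta^*)$;
\item so $\dim \ker F_{\ell} = 1 + \dim Z^1_{CE}(\t_0) - \dim(\R\delta_{\ell} + \im\beta^*)$;
\item also $\dim Z^1_{CE}(\t_0) - \dim\im\beta^* = \dim\ker\beta^*$.
\end{itemize}
It follows that $b_1(\g) = \dim\ker\beta^* + 1 - \dim\bigl((\R\delta_{\ell} + \im\beta^*)/\im\beta^*\bigr)$. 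The last quotient has dimension $1$ exactly when $\delta_{\ell} \notin \im\beta^*$, and dimension $0$ otherwise. Therefore $b_1(\g) = \dim\ker\beta^*$ if and only if (ii) holds; in the remaining case $b_1(\g) = \dim\ker\beta^* + 1$.

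There is no real obstacle here. The only care needed is bookkeeping:
\begin{itemize}
\item making sure the parametrization of Lemma \ref{lemma: ker F ell} is a linear bijection onto $Z^1_{CE}(\g)$, which is the injectivity check from the first paragraph;
\item not confusing the restriction of $\ell$ to $\t_0$ with the extended form.
\end{itemize}
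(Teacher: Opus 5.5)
Your proposal is correct and follows the paper's argument: both rest on identifying $Z^1_{CE}(\g)$ with $\ker F_{\ell}$ via Lemma~\ref{lemma: ker F ell}, together with the observation that the closed form attached to $(c,\sigma)$ takes the value $c$ on $\xi$. The differences are only organizational. You prove (i)$\Leftrightarrow$(ii) and (ii)$\Leftrightarrow$(iii) pairwise, and a single rank--nullity count gives both possible values $b_1(\g)\in\{\dim\ker\beta^*,\ \dim\ker\beta^*+1\}$; the paper instead runs the cycle (i)$\Rightarrow$(ii)$\Rightarrow$(iii)$\Rightarrow$(i) and writes down the decomposition $\ker F_{\ell}=\ker\beta^*\oplus\R(1,\sigma_0)$ explicitly, and your injectivity check is a detail the paper leaves implicit.
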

\begin{proof}  \phantom{.} \\
    \noindent (i) $\Rightarrow$ (ii): If there exists $\sigma\in Z^1_{CE}(\t_0)$ such that $\beta^*[\sigma] = -\delta_{\ell}$ then $(1,[\sigma]) \in \ker F_{\ell}$, which by Lemma \ref{lemma: ker F ell} gives a closed $1$-form vanishing whose value on $\xi$ is $1$, contradicting $\xi\in[\g,\g]$. 

    \noindent (ii) $\Rightarrow$ (iii): If $(c,[\sigma]) \in \ker F_{\ell}$ then $c \delta_{\ell} \in \im(\beta^*)$. By (ii), this forces $c=0$. Thus $\ker F_{\ell} = \ker(\beta^*)$, and so $b_1(\g) = \dim\ker(\beta^*)$.

    \noindent (iii) $\Rightarrow$ (i): If $\xi\notin[\g,\g]$, there exists a closed $1$-form $\lambda$ with $\lambda(\xi) = 1$. By Lemma \ref{lemma: ker F ell}, this gives an element $(1,[\sigma_0])\in\ker F_{\ell}$, and hence we get the identification
\begin{align*}
    \ker F_{\ell} = \ker(\beta^*)\oplus \R(1,[\sigma_0]),
\end{align*}
    \noindent implying $b_1(\g)=\dim\ker(\beta^*)+1$. \qedhere 
\end{proof}
    \indent The fact that $\omega_{\t}$ is an exact symplectic form on the Lie algebra $(\t_0, \alpha)$ entails that $(\t, \eta \vert_{\t})$ is a cohomologically trivial 1-dimensional central extension of $(\t_0, \alpha, \omega_{\t})$. While the exactness of $\omega_{\t}$ in $(\t_0, \alpha)$ forces $\xi \notin [\t,\t]_{\t}$ (this is a consequence of the universal coefficient theorem, as pointed out in \cite[Remark 3.11]{AG3}), it is often not true that $\xi \notin [\g,\g]_{\g}$: indeed, the Lie algebras in both Example \ref{example: sasakian 2} and Example \ref{example: sasakian 2} are DS-contact (in fact, they admit compatible Sasakian structures) and satisfy $\xi \in [\g,\g]_{\g}$.

\


\begin{thebibliography}{99}

\bibitem{AHK}
D.\ Alekseevsky, K.\ Hasegawa, and Y. \ Kamishima. Homogeneous Sasaki and Vaisman manifolds of unimodular Lie groups. \textit{Nagoya Math. J.} \textbf{243}, 83--96 (2021).

\bibitem{ARVS 1}
M.\ A.\ Alvarez, M.\ C.\ Rodríguez-Vallarte, and G.\ Salgado. Contact and Frobenius solvable Lie algebras with abelian nilradical. \textit{Commun. Algebra} \textbf{46}, No. \textbf{10}, 4344-4354 (2018).

\bibitem{ARVS 2}
M.\ A.\ Alvarez, M.\ C.\ Rodríguez-Vallarte, and G.\ Salgado. Contact nilpotent Lie algebras. \textit{Proc. Am. Math. Soc.} \textbf{145}, No. \textbf{4}, 1467-1474 (2017).

\bibitem{ARVS 3}
M.\ A.\ Alvarez, M.\ C.\ Rodríguez-Vallarte, and G.\ Salgado. Deformation theory of contact Lie algebras as double extensions. 
\textit{Proc. Am. Math. Soc.} \textbf{149}, No. \textbf{5}, 1827-1836 (2021).

\bibitem{AFV} 
A.\ Andrada, A.\ Fino, L.\ Vezzoni. A class of Sasakian 5-manifolds. \textit{Transform. Groups} \textbf{14}, No. \textbf{3}, 493--512 (2009). 

\bibitem{ACN} 
A.\ Andrada, S.\ G.\ Chiossi, and A.\ J.\ Nuñez. $\eta$-Einstein Sasakian Lie algebras. \textit{Manuscripta Math}. \textbf{177}, No. \textbf{1}, article 12 (2026). 

\bibitem{AG1}
A.\ Andrada and A.\ Garrone. Construction of symplectic solvmanifolds satisfying the hard-Lefschetz condition. \textit{Linear Algebra Appl.} \textbf{706}, 70--100 (2025). 

\bibitem{AG2}
A.\ Andrada and A.\ Garrone. Symplectic solvmanifolds not satisfying the hard-Lefschetz condition. Preprint, arXiv:2505.08113 [math.DG] (2025).

\bibitem{AG3}
A.\ Andrada and A.\ Garrone. 1-Lefschetz contact solvmanifolds. Preprint, arXiv: 2512.24311 [math.DG] (2025).

\bibitem{Blair}
D.\ E.\ Blair. \textit{Riemannian Geometry of Contact and Symplectic Manifolds}. Progress in Mathematics \textbf{203}, Birkhäuser (2010).

\bibitem{BEM} 
M.\ S.\ Borman, Y.\ Eliashberg, and E.\ Murphy. Existence and classification of overtwisted contact structures in all dimensions.
\textit{Acta Math}. \textbf{215}, No. \textbf{2}, 281-361 (2015).

\bibitem{BW}
W.\ M.\ Boothby and H.\ C.\ Wang. On contact manifolds. \textit{Ann. Math.} \textbf{68}, 721--734 (1958).

\bibitem{Cagliari} 
B.\ Cappelletti-Montano, A.\ de Nicola, and I.\ Yudin. Hard Lefschetz theorem for Sasakian manifolds. \textit{J. Differ. Geom}. \textbf{101}, No. \textbf{1}, 47--66 (2015).

\bibitem{CF}
G.\ Calvaruso and A.\ Fino. Five-dimensional K-contact Lie algebras. \textit{Monatsh. Math.} \textbf{167}, No. \textbf{1}, 35--59 (2012).

\bibitem{CR}
V.\ E.\ Coll and N.\ Russoniello. Classification of contact seaweeds. \textit{J. Algebra} \textbf{659}, 811-817 (2024).

\bibitem{CMRS}
V.\ E.\ Coll, N.\ Mayers, N.\ Russoniello, and G.\ Salgado. Contact seaweeds. \textit{Pac. J. Math.} \textbf{320}, No. \textbf{1}, 45-60 (2022).
 
\bibitem{Chu}
B.\ Chu. Symplectic homogeneous spaces. \textit{Trans. Am. Math. Soc.} \textbf{197}, 145--159 (1974). 


\bibitem{D}
A.\ Diatta. Left invariant contact structures on Lie groups. \textit{Differ. Geom. Appl.} \textbf{26}, No. \textbf{5}, 544--552 (2008).

\bibitem{DM}
A.\ Diatta and B.\ Manga. On properties of principal elements of Frobenius Lie algebras. \textit{J. Lie Theory} \textbf{24}, No. \textbf{3}, 849--864 (2014). 

\bibitem{DF}
A.\ Diatta and B.\ Foreman. Lattices in contact Lie groups and 5-dimensional contact solvmanifolds. \textit{Kodai Math. J.} \textbf{38}, No. \textbf{1}, 228--248 (2015).

\bibitem{DPS}
G.\ Dileo, D.\ Poyraz, and B.\ Şahin. Transversely Kähler almost contact metric Lie algebras. Preprint, arXiv:2604.12538 (2026)

\bibitem{KA}
A.\ El Kacimi-Alaoui. Opérateurs transversalement elliptiques sur un feuilletage riemannien et applications. \textit{Compositio Math}. \textbf{73}, No. \textbf{1}, 57--106 (1990). 

\bibitem{FH}
W.\ Fulton and J.\ Harris. \textit{Representation Theory: A First Course}. Graduate Texts in Mathematics \textbf{129}. Springer-Verlag (1991). 

\bibitem{GR}
M.\ Goze and E.\ Remm. Contact and Frobeniusian forms on Lie groups. \textit{Differ. Geom. Appl}. \textbf{35}, 74-94 (2014).

\bibitem{G}
M.\ L.\ Gromov. Stable mappings of foliations into manifolds (Russian). \textit{Izv. Akad. Nauk SSSR Ser. Mat.} \textbf{33}, 707–734 (1969). 

\bibitem{Hano}
J.\ Hano. On Kählerian homogeneous spaces of unimodular Lie groups. \textit{Am. J. Math.} \textbf{79}, 885--900 (1957).

\bibitem{Lin}
Y.\ Lin. Hodge theory on transversely symplectic foliations. \textit{Q. J. Math.} \textbf{69}, No. 2, 585--609 (2018).

\bibitem{LL}
E.\ Loiudice and A.\ Lotta, A. On five dimensional Sasakian Lie algebras with trivial center. \textit{Osaka J. Math.} \textbf{55}, No. \textbf{1}, 39-49 (2018).

\bibitem{MY}
A.\ Moreau and O.\ Yakimova. Coadjoint orbits of reductive type of parabolic and seaweed Lie subalgebras. \textit{Int. Math. Res. Not}. \textbf{2012}, No. \textbf{19}, 4475-4519 (2012).

\bibitem{Nakajima}
K.\ Nakajima. Homogeneous Kähler manifolds of non-degenerate Ricci curvature. \textit{J. Math. Soc. Japan} \textbf{42}, 475--494 (1990).

\bibitem{Ovando}
G.\ Ovando. Four dimensional symplectic Lie algebras. \textit{Beitr. Algebra Geom.} \textbf{47}, No. \textbf{2}, 419--434 (2006).

\bibitem{RVSSV}
M.\ C.\ Rodríguez-Vallarte, G.\  Salgado, and O.\ A.\ Sánchez-Valenzuela. On extensions of Frobenius-Kähler and Sasakian Lie algebras. Preprint, arXiv:2408.11236 (2024).

\bibitem{S}
G.\ Salgado. Invariants of contact Lie algebras. \textit{J. Geom. Phys}. \textbf{144}, 388-396 (2019).

\end{thebibliography}
\end{document}